\def\R{\mathbb R}
\def\Z{\mathbb Z}
\def\N{\mathbb N}
\numberwithin{equation}{section}
\newtheorem{theorem}{Theorem}
\newtheorem{lemma}[theorem]{Lemma}
\newtheorem{proposition}[theorem]{Proposition}
\numberwithin{theorem}{section}
\begin{document}

\title{Not all sub-Riemannian minimizing geodesics are smooth}


\author{
Y. Chitour\thanks{Universit\'e Paris-Saclay, Centralesupelec, CNRS, Laboratoire des signaux et syst\`emes, UMR CNRS 8506, 91190 Gif-sur-Yvette, France (\texttt{yacine.chitour@centralesupelec.fr})}
\and F. Jean\thanks{Unit\'{e} de Math\'{e}matiques Appliqu\'{e}es, ENSTA, Institut Polytechnique de Paris, 91120 Palaiseau, France (\texttt{frederic.jean@ensta.fr})} \and  R. Monti
\and L. Rifford\thanks{Universit\'e C\^ote d'Azur, CNRS, Labo.\ J.-A.\ Dieudonn\'e,  UMR CNRS 7351, Parc Valrose, 06108 Nice Cedex 02, France \& AIMS Senegal, Km 2, Route de Joal, Mbour, Senegal ({\tt ludovic.rifford@math.cnrs.fr})} \and L. Sacchelli\thanks{Universit\'{e} C\^{o}te d’Azur, Inria, CNRS, LJAD, France ({\tt ludovic.sacchelli@inria.fr})}
\and M. Sigalotti\thanks{Laboratoire Jacques-Louis Lions, Sorbonne Universit\'{e}, Universit\'{e} de Paris, CNRS, Inria, Paris, France (\texttt{mario.sigalotti@inria.fr})}
\and A. Socionovo\thanks{Laboratoire Jacques-Louis Lions, Sorbonne Universit\'{e}, Universit\'{e} de Paris, CNRS, Inria, Paris, France (\texttt{alessandro.socionovo@sorbonne-universite.fr})}}

\date{\today}


\maketitle

\begin{abstract}
A longstanding open question in sub-Riemannian geometry is the following: are sub-Riemannian length minimizers smooth? We give a negative answer to this question, exhibiting an example of a $C^2$ but not $C^3$
length-minimizer of a real-analytic (even polynomial) sub-Riemannian structure.
\end{abstract}


\section{Introduction}

Let $M$ be a smooth, connected manifold of dimension $n\geq 3$, equipped with a sub-Riemannian structure $(\Delta,g)$. This structure consists of a  bracket generating distribution $\Delta$ of rank $m\leq n$ on $M$, that is, a smooth subbundle of $TM$ of dimension $m$ generated locally by $m$ smooth vector fields $X^1, \ldots,X^m$ satisfying the H\"ormander condition
\begin{eqnarray*}
\mbox{Lie} \left\{ X^1, \ldots,X^m \right\} (x) = T_xM \qquad \forall\,x \in M,
\end{eqnarray*}
and a smooth metric $g$ on $\Delta$. By the Chow-Rashevsky Theorem, $M$ is horizontally path-connected with respect to $\Delta$. In other words, for any two points $x,y \in M$, there exists a horizontal path
 connecting them, i.e., an absolutely continuous curve $\gamma : [0,T] \rightarrow M$ satisfying
\begin{eqnarray*}
\dot{\gamma}(t) \in \Delta (\gamma(t)) \qquad  \mbox{for almost every } t \in [0,T],\qquad \gamma(0)=x,\quad \gamma(T)=y.
\end{eqnarray*}
The sub-Riemannian distance $d_{SR}$ associated with  $(\Delta,g)$ is defined as the infimum of the lengths of horizontal paths connecting two points:  for every $x,y \in M$,
\[
d_{SR}(x,y) := \inf \left\{ \mbox{length}^g(\gamma) \, \vert \, \gamma : [0,T] \rightarrow M \mbox{ horizontal s.t. } \gamma(0)=x, \, \gamma(T)=y \right\},
\]
where the length of a horizontal path, computed using the norm $|\cdot|_g$ induced by the metric $g$, is given by
\[
 \mbox{length}^g(\gamma) :=\int_0^T \left|\dot{\gamma}(t)\right|_g \, dt.
 \]
Sub-Riemannian geometry explores the metric and geometric properties of the resulting metric space $(M,d_{SR})$. In the special case where $m=n$, the framework reduces to the Riemannian case, where all absolutely continuous curves are horizontal. A distinctive feature of sub-Riemannian geometry, when $m<n$, is the presence of singular horizontal paths. These paths are central to one of the most challenging problems in the field: understanding the regularity of the horizontal curves minimizing the sub-Riemannian distance $d_{SR}$.
The aim of this paper is to demonstrate that these curves are not necessarily smooth.

The Hopf-Rinow theorem remains valid in the sub-Riemannian setting. For further details on the notions and results of sub-Riemannian geometry mentioned in the introduction, we refer the reader to Bella\"iche's monograph \cite{bellaiche96}, or to the books by Montgomery \cite{montgomery02},  Agrachev, Barilari and Boscain \cite{abb20}, and the fourth author \cite{riffordbook}. If the metric space $(M,d_{SR})$ is complete, then for any points $x,y\in M$, there exists a horizontal path $\gamma : [0,T] \rightarrow M$ between $x$ and $y$ that minimizes length, {\it i.e.}, $d_{SR}(x,y)=\mbox{length}^g(\gamma)$. When reparametrized with constant speed, such a path is referred to as \emph{a minimizing geodesic}. It minimizes the energy $\int_0^T |\dot{\gamma}(t)|_g^2 \, dt$ between its endpoints in fixed time $T$. By the Pontryagin maximum principle, a minimizing geodesic is either the projection of a so-called normal extremal or strictly singular. In the former case, it is smooth because it is the projection of a solution of a smooth Hamiltonian system associated with $(\Delta,g)$ in $T^*M$. In the latter case, which cannot be ruled out due to a famous example by Montgomery \cite{montgomery94}, its regularity remains uncertain. So far, results on the regularity of strictly singular minimizing geodesics are known only in a few cases. Building on an earlier result by Leonardi and Monti \cite{lm08}, Hakavuori and Le Donne \cite{hl16} showed that minimizing geodesics cannot exhibit corner-type singularities. In \cite{bcjps20}, Barilari, Chitour, Jean, Prandi, and Sigalotti used this result to prove that minimizing geodesics for rank $2$ sub-Riemannian structures with step up to $4$ are of class $C^1$. Then Monti, Pigati and Vittone proved in \cite{mpv18} the everywhere existence of a tangent line in the tangent cone to minimizing geodesics, a step towards their $C^1$ regularity.
In the case where $M$ and $\Delta$ are real-analytic, Sussmann \cite{sussmann14} (see also \cite{bprPreprint}) showed that every strictly singular minimizing geodesic $\gamma : [0,T] \rightarrow M$ is smooth (and analytic whenever $g$ is analytic) on an open dense subset of $[0,T]$. In this context, Belotto da Silva, Figalli, Parusi\'nski and Rifford \cite{bfpr22} proved that minimizing geodesics for rank-$2$ sub-Riemannian structures in dimension $3$ are semianalytic. In addition, Le Donne, Paddeu, and Socionovo \cite{lps24} obtained the same result in any dimension under the assumption that the distribution has rank $2$, is equiregular, and is metabelian. In the last two cases, the above-mentioned result of Hakavuori and Le Donne allows to show that the minimizing geodesics are indeed of class $C^{1,\alpha}$ for some $\alpha\in (0,1]$.

In this paper we present  a counterexample to the smoothness of minimizing geodesics, which has two main motivations. On the one hand, it is motivated by the result of \cite{bfpr22}, which follows from a precise description of the orbits of the singular line field given by the trace of the distribution $\Delta$ on the Martinet surface $\Sigma_{\Delta}$. On the other hand, it also has its origins in \cite{lm08}, where it is explained how to construct examples of singular curves with any kind of singularity, and in the study of (non-)minimality of half-parabolic type curves in~\cite{monti}.

We consider the sub-Riemannian structure $(\Delta,g)$ in $\R^3$ with coordinates $(x_1,x_2,x_3)$, generated by an orthonormal family of vector fields $\{X^1,X^2\}$ defined as
\[
X^1 = \partial_1 \quad \mbox{and} \quad X^2 = \partial_2 + P(x)^2 \partial_3,
\]
where
\[
P(x) = x_1^{2} - x_2^m   \qquad \forall x =(x_1,x_2,x_3) \in \R^3,
\]
and $m$ is an odd integer satisfying $m\geq 5$.
Besides the motivations described above, the counterexample took this particular form after a study of several types of possible examples in~\cite{socionovo}, its structure (in particular with the square of $P$) being inspired by the Liu--Sussmann example~\cite{ls95}.

The Martinet surface of this distribution is given by
\[
\Sigma_{\Delta} := \left\{ x \in \R^3 \, \vert \, [X^1,X^2](x) \in \Delta(x) \right\} = \left\{ Q = 0  \right\} \quad \mbox{with} \quad Q(x)=  \partial_1 P^2(x) = 4x_1 \left(x_1^{2} -  x_2^m\right).
\]
As any horizontal curve contained in $\Sigma_{\Delta}$ is singular, the curve $\bar{\gamma} : [0,\infty) \rightarrow \Sigma_{\Delta}$ given by
\begin{eqnarray}\label{DEFgammabar}
\bar{\gamma}(t) = \left( t^{\bar{m}},t,0\right) \quad \forall t \geq 0, \quad \mbox{with} \quad \bar{m}:=\frac{m}{2},
\end{eqnarray}
is a singular horizontal path of the distribution $\Delta$. Moreover it is not smooth whenever $\bar{m}$ is not an integer. We show the following result.

\begin{theorem}\label{MainTHM}
For every odd integer $m\geq 5$ and for any sufficiently small $\epsilon>0$, the curve $\bar{\gamma}|_{ [0,\epsilon]}$ is the unique horizontal path minimizing the distance between $\bar{\gamma}(0)$ and $\bar{\gamma}(\epsilon)$ with respect to $(\Delta,g)$. Furthermore, its arc length reparametrization is of class $C^{\bar{m}-1/2}$ but not $C^{\bar{m}+1/2}$.
\end{theorem}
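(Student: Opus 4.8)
The plan is to reduce the three‑dimensional minimization to a purely planar constrained problem and then treat that problem directly. Since $X^1,X^2$ are orthonormal and $\dot x_1=u_1$, $\dot x_2=u_2$ along any horizontal curve, its length equals $\int\sqrt{\dot x_1^2+\dot x_2^2}\,dt$, i.e.\ the Euclidean length of the planar projection $c(t)=(x_1(t),x_2(t))$. Moreover $\dot x_3=P^2\dot x_2$ with $x_3(0)=0$ recovers $x_3$ from $c$, so a horizontal curve is determined by its projection, and it joins $(0,0,0)$ to $(\epsilon^{\bar m},\epsilon,0)$ if and only if $c$ joins $(0,0)$ to $(\epsilon^{\bar m},\epsilon)$ and $\int_c P^2\,dx_2=0$. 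Thus the statement is equivalent to: the planar curve $\bar c(\tau)=(\tau^{\bar m},\tau)$, $\tau\in[0,\epsilon]$, is the unique shortest Euclidean curve between these endpoints subject to $\int_c P^2\,dx_2=0$. Since $P\equiv0$ on $\bar c$, one has $\int_{\bar c}P^2\,dx_2=0$, and Green's theorem together with $\partial_1P^2=Q$ rewrites the constraint as $\iint_\Omega Q\,dx_1\,dx_2=0$, where $\Omega$ is the signed region between $c$ and $\bar c$. The weight $Q=4x_1(x_1^2-x_2^m)$ is negative just below $\bar c$ and positive just above it (for $x_1>0$), and this sign geometry is what drives the argument.

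The graph case is immediate. If $c$ is monotone in $x_2$, write it as $x_1=y(\tau)$, $\tau\in[0,\epsilon]$; then
\[
\int_c P^2\,dx_2=\int_0^\epsilon\big(y(\tau)^2-\tau^m\big)^2\,d\tau\ge0,
\]
with equality iff $y(\tau)^2=\tau^m$ a.e. As $y$ is continuous with $y(0)=0$ and $y(\epsilon)=\epsilon^{\bar m}>0$, this forces $y(\tau)=\tau^{\bar m}$, i.e.\ $c=\bar c$. Hence $\bar c$ is the unique feasible graph, and in particular the unique minimizer among graphs; its length is $L(\bar c)=\int_0^\epsilon\sqrt{1+\bar m^2\tau^{2\bar m-2}}\,d\tau=\epsilon+O(\epsilon^{m-1})$.

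The crux is to show that every minimizer is a graph over $x_2$. A minimizer exists by completeness and Hopf--Rinow and satisfies $L(c)\le L(\bar c)=\epsilon+O(\epsilon^{m-1})$; elementary estimates then confine it to a thin tube around $\bar c$. Indeed $L(c)\ge\int|\dot x_2|\ge\epsilon$ bounds the backward $x_2$-variation by $O(\epsilon^{m-1})$, while from $\int\big(\sqrt{\dot x_1^2+\dot x_2^2}-|\dot x_2|\big)\,dt\le O(\epsilon^{m-1})$ and Cauchy--Schwarz one gets $\int|\dot x_1|\,dt=O(\epsilon^{\bar m})$, hence $|x_1|=O(\epsilon^{\bar m})$ along the curve. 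Inside this tube, where $Q$ changes sign exactly across $\bar c$, I would argue that any fold (a maximal arc on which $x_2$ is non‑monotone) can be removed by a length‑nonincreasing surgery whose effect on $\iint_\Omega Q\,dx_1\,dx_2$ has a controlled sign; balancing this against the area constraint $\iint_\Omega Q\,dx_1\,dx_2=0$ forces the fold to be trivial, so $c$ is monotone and therefore $c=\bar c$ by the previous paragraph. Lifting back, the $x_3$-component is then determined, giving existence, minimality, and uniqueness simultaneously. This step — upgrading the a priori trapping to genuine monotonicity by exploiting the fine sign structure of $Q=4x_1(x_1^2-x_2^m)$ and the precise exponents — is where I expect essentially all the difficulty to lie, and where the smallness of $\epsilon$ is indispensable.

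The regularity claim is then a direct computation. Along $\bar\gamma$ the speed is $\sqrt{1+\bar m^2 t^{2\bar m-2}}$, so $s(t)=\int_0^t\sqrt{1+\bar m^2\sigma^{2\bar m-2}}\,d\sigma$ is analytic with $s'(0)=1$, and its inverse $t(s)$ is analytic with $t(s)=s+O(s^{m-1})$. Hence $x_2(s)=t(s)$ and $x_3\equiv0$ are smooth, while $x_1(s)=t(s)^{\bar m}=s^{\bar m}h(s)$ with $h$ analytic and $h(0)\ne0$. Since $\bar m=m/2$ is a half‑integer, $s^{\bar m}$ is of class $C^{\bar m-1/2}$ but its derivative of order $\bar m+1/2$ behaves like $s^{-1/2}$; multiplication by the smooth non‑vanishing factor $h$ preserves this, so the arc‑length reparametrization is $C^{\bar m-1/2}$ but not $C^{\bar m+1/2}$. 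The oddness of $m$ enters precisely here, making $\bar m$ a genuine half‑integer so that $\bar\gamma$ fails to be smooth, while $m\ge5$ ensures $2\bar m-2\ge3$, underpinning both the tube estimates and the $C^2$ baseline.
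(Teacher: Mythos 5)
Your reduction to the planar constrained problem, the uniqueness of $\bar c$ among graphs over $x_2$, the a priori confinement estimates ($|x_1|=O(\epsilon^{\bar m})$, backward $x_2$-variation $O(\epsilon^{m-1})$, matching Lemma \ref{PROPPrelLEM1}), and the final regularity computation are all correct, and the first and last of these coincide with the paper's Step 1 and with the (implicit) regularity argument. But the heart of your proof --- ``any fold can be removed by a length-nonincreasing surgery whose effect on $\iint_\Omega Q\,dx$ has a controlled sign; balancing this against the area constraint forces the fold to be trivial'' --- is asserted, not proven, and it is exactly where the entire difficulty of Theorem \ref{MainTHM} lies. The constraint $\int_c P^2\,dx_2=0$ is an \emph{equality}: a surgery that shortens the curve generically destroys it, and restoring it costs length (an excursion or loop into $\{P\neq 0\}$). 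Whether a competitor can win this exchange is a quantitative question about exponents: a curve hugging $\{P=0,\,x_1\geq 0\}$ inside $\{P\leq\rho\}$ can be shorter than $\bar\omega$ by as much as $C\rho^{1-1/m}$ (this is precisely Proposition \ref{PROPsublevelsets}), while a loop of length $L$ can enclose weighted area up to order $\sup|Q|\cdot L^2$ (Rad\'o's isoperimetric inequality, Lemma \ref{LEMStokesIso}). Showing these two effects cannot balance when $\epsilon$ is small is the content of Proposition \ref{PROPMain}, Section \ref{SECPROPMain}, and Step 4, and nothing in your sketch produces the needed estimates.

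Note also that the paper does not attempt your route of proving monotonicity of an arbitrary Lipschitz competitor. It first observes that a minimizer distinct from $\bar\gamma$ cannot lie in the Martinet surface, hence is a \emph{normal extremal}: analytic and satisfying $\dot\omega=(\cos\theta,\sin\theta)$, $\dot\theta=\lambda\,Q(\omega)$ as in (\ref{SYSomega}). This structure (curvature sign tied to the sign of $P$, analyticity giving finitely many crossings of $\{P=0\}$ and finitely many loops) is what makes the geometric analysis via Stokes, Gauss--Bonnet and the obstruction lemmas possible, culminating in the existence of a unique loop $\ell$ with $L(\ell)\geq c\beta\epsilon^{-\bar m}$, $\beta\leq C\epsilon^{3\bar m-1}$ and $|\lambda|\beta^2\geq c$; these bounds are then played off against Proposition \ref{PROPsublevelsets} to reach $\epsilon^{-\bar m}\lesssim\epsilon^{-1}$, impossible for $m\geq 5$. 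By working with arbitrary Lipschitz curves you forfeit this extremal structure (you cannot even assert finitely many folds), making your missing step strictly harder than the paper's. As it stands, your proposal establishes uniqueness only within the class of graphs, plus the regularity claim; the minimality of $\bar\gamma$ itself remains unproven.
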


The example with the lowest regularity provided by Theorem \ref{MainTHM} is of class $C^2$ but not $C^3$, and it is achieved for $m=5$. In \cite{socionovo}, it is shown that this result is sharp in the sense that, for every $\epsilon >0$, the curve $\bar{\gamma}|_{ [0,\epsilon]}$ is not a minimizer as soon as it is $C^1$  but not $C^2$. Theorem \ref{MainTHM} disproves the claim that sub-Riemannian minimizing geodesics are always of class $C^{\infty}$ but leaves open the question of $C^1$ or $C^2$ regularity. The remainder of the paper is dedicated to the proof of Theorem \ref{MainTHM}. We outline the general structure of the proof in Section \ref{SECProof}, referring to subsequent sections for the required technical results.\medskip

Finally, the authors wish to emphasize the special contribution of one of them, A.~Socionovo.

\paragraph*{Research funding.} This project has received funding from the European
Union's Horizon 2020 research and innovation programme under the Marie
Sk{\l}odowska-Curie grant agreement No 101034255.
\includegraphics[width=0.65cm]{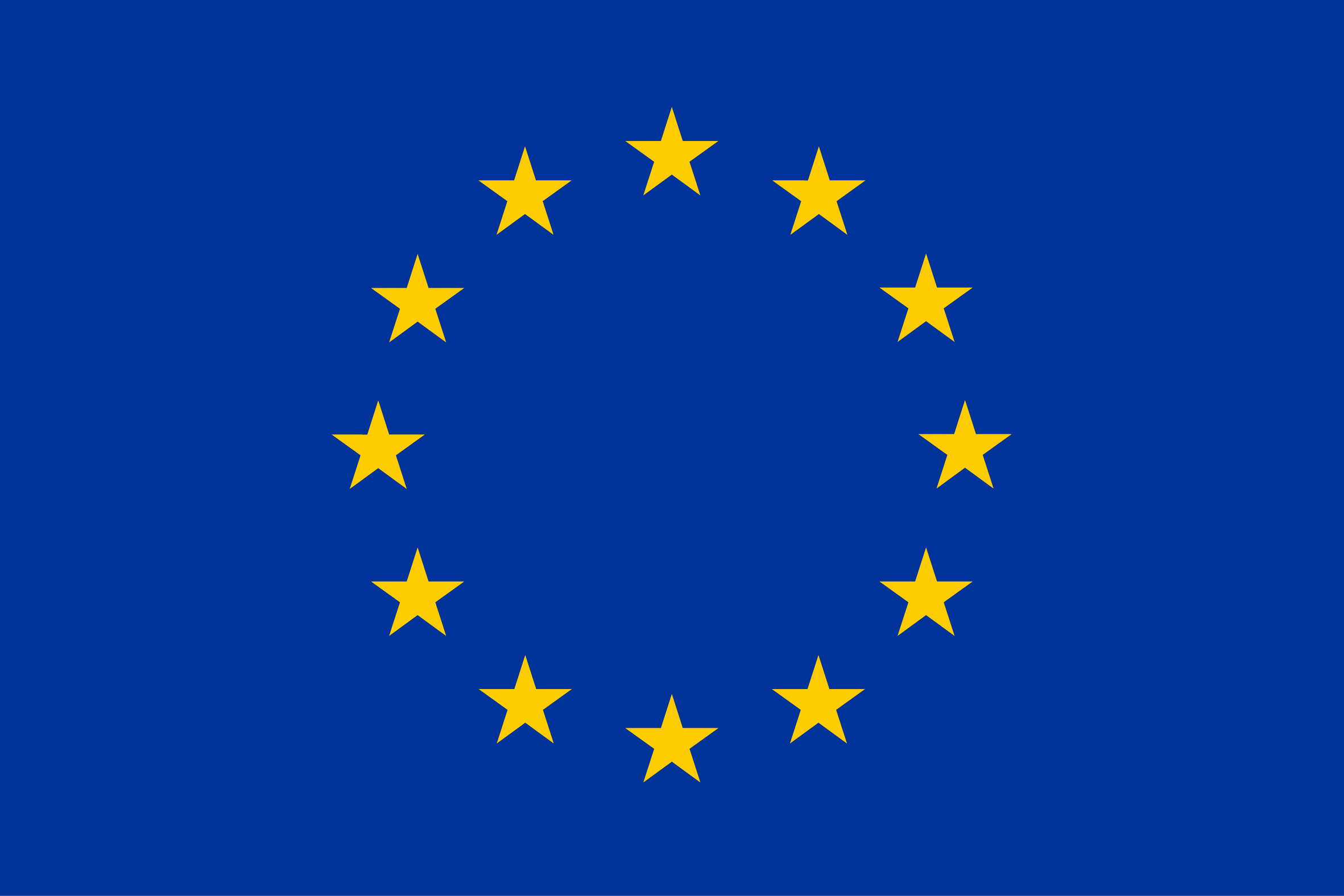}

\section{Proof of Theorem \ref{MainTHM}}\label{SECProof}

The purpose of this section is to outline the proof of Theorem \ref{MainTHM}, presenting its structure and referring to the subsequent sections for the detailed demonstration of the required results.

The proof of Theorem \ref{MainTHM} proceeds by contradiction. We fix $\epsilon>0$ and we assume that there exists a horizontal path
\[
\gamma_{\epsilon}\, : \, \left[0,\mbox{length}^g \left(\gamma_{\epsilon}\right)\right] \rightarrow \R^3,
\]
parametrized by arc length (with respect to $g$), which is minimizing from $\bar{\gamma}(0)$ to $\bar{\gamma}(\epsilon)$ but is not identical to $\bar{\gamma}_{\epsilon}:=\bar{\gamma}|_{ [0,\epsilon]}$, up to reparametrization. Then, we have
\begin{eqnarray}\label{Lengths}
 d_{SR}\left(\bar{\gamma}(0),\bar{\gamma}(\epsilon)\right) = \mbox{length}^g \left(\gamma_{\epsilon}\right)  \leq  \mbox{length}^g \left( \bar{\gamma}_{\epsilon}\right).
\end{eqnarray}
After a careful study of such a curve $\gamma_{\epsilon}$, we will derive a contradiction if $\epsilon>0$ is sufficiently small. The proof consists in several steps that we now describe. The first step is a straightforward consequence of the fact that $\gamma_{\epsilon}$ is necessarily a regular horizontal path.\\

\noindent {\bf Step 1:} Projection of the minimization problem to the plane $(x_1,x_2)$.\\
Being a minimizing horizontal path between two points $y, z \in \R^3$ with respect to $(\Delta,g)$ is equivalent to having a projection onto the $(x_1,x_2)$-plane that minimizes the Euclidean length among all curves joining $(y_1,y_2)$ to $(z_1,z_2)$ and along which the integral of $P^2dx_2$ is equal to $z_3 - y_3$. Thus, the projection of $\gamma_{\epsilon}$ onto the $(x_1,x_2)$-plane, denoted by $\omega_{\epsilon}$, has Euclidean length $L(\omega_{\epsilon})= \mbox{length}^g (\gamma_{\epsilon})$, is parametrized by (Euclidean) arc length, and minimizes the Euclidean length $L(\zeta)$ among all Lipschitz curves $\zeta:[0,\tau] \rightarrow \R^2$ satisfying the following conditions:
\begin{eqnarray}\label{OptimPlane}
\zeta(0)=A_0:=(0,0), \quad \zeta(\tau)= A_{\epsilon}:= (\epsilon^{\bar{m}},\epsilon), \quad \mbox{and} \quad \int_{\zeta} P(x)^2\, dx_2 = 0.
\end{eqnarray}
Furthermore, since $\gamma_{\epsilon}$ is not identical to $\bar{\gamma}_{\epsilon}$, it cannot be contained in $\Sigma_{\Delta}$ and then is not a singular curve. Thus it must correspond to the projection of a normal extremal. Consequently (see~\cite{abb20,lm08,riffordbook}), the curve $\omega_{\epsilon}:[0,L(\omega_{\epsilon})]\rightarrow \R^2$ is associated with a function $\theta_{\epsilon}: [0,L(\omega_{\epsilon})]\rightarrow \R$, where $\theta_{\epsilon}(0)\in (-\pi,\pi]$, and a constant $\lambda_{\epsilon} \in \R$, such that the following system holds:
\begin{eqnarray}\label{SYSomega}
\dot{\omega}_{\epsilon}(t) & = &  \left( \begin{matrix}
\cos \theta_{\epsilon}(t)\\
 \sin \theta_{\epsilon}(t)
 \end{matrix}
 \right)
 \quad \mbox{and} \quad
\dot{\theta}_{\epsilon}(t)  =  \lambda_{\epsilon} \, Q\left(\omega_{\epsilon}(t)\right)
\qquad \forall t \in [0,L(\omega_{\epsilon})].
\end{eqnarray}
In particular, the functions $\omega_{\epsilon}$ and $\theta_{\epsilon}$ are analytic. By construction, $L(\omega_{\epsilon}) = \mbox{length}^g (\gamma_{\epsilon})$ is no greater than $L(\bar{\omega}_{\epsilon})=\mbox{length}^g (\bar{\gamma}_{\epsilon})$, where $\bar{\omega}_{\epsilon}$ is the projection of $\bar{\gamma}_{\epsilon}$. \\

Before delving into the study of $\omega_{\epsilon}$, the next step is to address a problem of calculus of variations with constraints which will be instrumental in proving the main result of Step 3 and in reaching a contradiction in Step 4. \\


\noindent {\bf Step 2:} A problem of calculus of variations with constraints relying on $P$-sublevel sets.\\
The following result concerns the length of curves remaining in the region where $P \leq \rho$. 
The proof is provided in Appendix \ref{SECPROPsublevelsets}.

\begin{proposition}\label{PROPsublevelsets}
Given $\rho>0$, the following properties hold.
\begin{itemize}
  \item[(i)] For every $K>0$, there exist $C(K)>0$ and $\epsilon_0>0$ such that for all $\epsilon \in (0,\epsilon_0)$, if $\rho \in (0,K\epsilon^{3\bar{m}-1})$, then every Lipschitz curve $\zeta:[0,\tau]\rightarrow \R^2$ satisfying
\begin{eqnarray}\label{CalcVar}
\zeta(0)=A_0, \quad \zeta(\tau) = A_{\epsilon}, \quad \mbox{and} \quad \zeta_1(t) >0,  \  P(\zeta(t)) \leq \rho \quad \forall t \in [0,\tau],
\end{eqnarray}
admits the  following lower bound on its length,
\begin{eqnarray}\label{19dec1}
L\left( \zeta \right)  \geq L\left(\bar{\omega}_{\epsilon}\right) - C(K) \rho^{1-\frac{1}{m}}.
\end{eqnarray}

  \item[(ii)] Define the functions $f_{\rho}$ and $\Gamma_{\rho}$ on $[0,+\infty)$ by
\[
f_{\rho}(t):= \left( t^m+\rho\right)^{\frac{1}{2}} \quad \mbox{and} \quad \Gamma_{\rho}(t) := \left(f_{\rho}(t),t\right) \qquad \forall t \geq 0,
\]
and for any interval $I\subset [0,+\infty)$, set $\Gamma_{\rho} (I) := \left\{ \Gamma_{\rho}(t) \, \vert \, t \in I \right\}$. Then, for all $s\geq t\geq 0$, 
\begin{eqnarray}\label{PROPsublevelsetslast}
L\left(\Gamma_{\rho}([t,s])\right) -  L\left(\left[ \Gamma_{\rho}(t),\Gamma_{\rho}(s)\right])\right) \leq \frac{ \bar{m}^2}{2} \left(\bar{m} -1\right) s^{m-3} (s-t)^2.
\end{eqnarray}
\end{itemize}
\end{proposition}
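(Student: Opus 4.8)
I will treat the two parts separately; part (ii) is a self‑contained one–variable estimate, so I describe it first, and then devote the bulk of the plan to part (i), which is where the real work lies.

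\medskip
\emph{Part (ii).} The left–hand side is the amount by which the arc of the convex graph $x_1=f_\rho(x_2)$ over $[t,s]$ exceeds its chord. A direct computation gives
\[
f_\rho''(u)=\tfrac{m}{4}\,u^{m-2}\,(u^m+\rho)^{-3/2}\big[(m-2)u^m+2(m-1)\rho\big]>0,
\]
so $f_\rho$ is convex and $f_\rho'$ is increasing; I shall also use the clean bounds $f_\rho'(u)^2\le\bar m^2u^{m-2}$ and $f_\rho'(u)\le\bar m\,u^{\bar m-1}$, both obtained by discarding $\rho$ in $(u^m+\rho)^{-1}$. Writing the arc length as $\int_t^s\sqrt{1+f_\rho'(u)^2}\,du$ and the chord as $(s-t)\sqrt{1+k^2}$ with $k:=(f_\rho(s)-f_\rho(t))/(s-t)=f_\rho'(\xi)$ for some $\xi\in(t,s)$, I would expand $\phi(x):=\sqrt{1+x^2}$ to second order. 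Since $\phi''\le1$ and the first–order term integrates to zero precisely by the choice of $k$, this yields the reduction
\[
L\big(\Gamma_\rho([t,s])\big)-L\big([\Gamma_\rho(t),\Gamma_\rho(s)]\big)\;\le\;\tfrac12\int_t^s\big(f_\rho'(u)-k\big)^2\,du .
\]
Using monotonicity of $f_\rho'$ to bound $|f_\rho'(u)-k|\le f_\rho'(s)-f_\rho'(t)$, the problem reduces to the elementary inequality $\big(f_\rho'(s)-f_\rho'(t)\big)^2\le\bar m^2(\bar m-1)\,s^{m-3}(s-t)$, which plugged in gives exactly the sharp constant. The latter I would prove by a monotonicity argument in $t$ (both sides vanish at $t=s$); a useful consistency check is that $\tfrac{\bar m^2}{2}(\bar m-1)s^{m-3}$ equals $\tfrac12 f_0'(s)f_0''(s)$ for the limiting profile $f_0(u)=u^{\bar m}$.

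\medskip
\emph{Part (i), set‑up.} Since $\zeta_1>0$, the constraint $P(\zeta)=\zeta_1^2-\zeta_2^m\le\rho$ is equivalent to $\zeta_1\le\sqrt{\zeta_2^m+\rho}=f_\rho(\zeta_2)$ (which also forces $\zeta_2>-\rho^{1/m}$): the curve is confined to the left of the convex graph $\Gamma_\rho$. The comparison curve $\bar\omega_\epsilon$ is exactly the graph of $f_0(u)=u^{\bar m}$ over $[0,\epsilon]$, carrying the unit tangent field $V(u)=\frac{1}{\sqrt{1+f_0'(u)^2}}(f_0'(u),1)$. The plan is to use $V$, viewed as a function of $x_2$ alone, as a calibration: as $|V|=1$, $L(\zeta)\ge\int_0^\tau\langle V(\zeta_2),\dot\zeta\rangle\,dt$, with equality along $\bar\omega_\epsilon$. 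The $V_2$–term depends only on $x_2$, hence $\int V_2(\zeta_2)\dot\zeta_2\,dt$ is path independent; integrating the $V_1$–term by parts produces the common boundary term $V_1(\epsilon)\epsilon^{\bar m}$ and a defect $-\int_0^\tau V_1'(\zeta_2)\,\zeta_1\,\dot\zeta_2\,dt$. Subtracting the identity that holds for $\bar\omega_\epsilon$ gives the master inequality
\[
L(\zeta)-L(\bar\omega_\epsilon)\;\ge\;\int_0^\epsilon V_1'(u)f_0(u)\,du-\int_0^\tau V_1'(\zeta_2)\,\zeta_1\,\dot\zeta_2\,dt .
\]

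\medskip
\emph{Part (i), defect estimate.} Because $V_1'\ge0$ for $x_2\ge0$, on the arcs where $\zeta_2$ increases the constraint $\zeta_1\le f_\rho(\zeta_2)$ bounds the integrand, while on the arcs where $\zeta_2$ decreases the integrand is nonpositive and may be discarded; in the monotone case this yields $L(\bar\omega_\epsilon)-L(\zeta)\le\int_0^\epsilon V_1'(u)\big(f_\rho(u)-f_0(u)\big)\,du$. With $f_\rho-f_0=\sqrt{u^m+\rho}-u^{\bar m}$ and $V_1'(u)\asymp f_0''(u)=\bar m(\bar m-1)u^{\bar m-2}$ for $u$ small, I would estimate this integral by splitting at the natural scale $u\sim\rho^{1/m}$: below it one has $f_\rho-f_0\approx\sqrt\rho$, contributing $\sim\rho^{1/2+(\bar m-1)/m}=\rho^{1-1/m}$, and above it $f_\rho-f_0\approx\tfrac{\rho}{2}u^{-\bar m}$, contributing $\int u^{-2}\,du\sim\rho^{-1/m}$ and hence again $\sim\rho^{1-1/m}$. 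This produces the bound $C(K)\rho^{1-1/m}$; the hypothesis $\rho<K\epsilon^{3\bar m-1}$ guarantees $\rho^{1/m}\ll\epsilon$, so the splitting scale sits well inside $[0,\epsilon]$ and the upper endpoint contributes only lower‑order terms.

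\medskip
\emph{Main obstacle.} The genuinely delicate point is part (i) for \emph{non‑monotone} curves: I must ensure that backtracking in $x_2$ (excursions with $\dot\zeta_2<0$, overshoot beyond height $\epsilon$, or a dip into $(-\rho^{1/m},0)$ where $V$ is not naturally defined) does not inflate the discarded and boundary terms beyond $O(\rho^{1-1/m})$ — note that $\rho^{1/m}$ is much \emph{larger} than $\rho^{1-1/m}$ for $m\ge5$, so crude error terms are not allowed. The remedy is that the asserted inequality is trivial unless $L(\zeta)<L(\bar\omega_\epsilon)$, and this a priori length bound forces the total variation of $\zeta_2$ to exceed $\epsilon$ by only $O(\epsilon^{2\bar m-1})$, so the up‑variation of $\Phi(\zeta_2):=\int_0^{\zeta_2}V_1'f_\rho$ overshoots $\Phi(\epsilon)-\Phi(0)$ negligibly, and the bottom strip is controlled by an elementary separate argument. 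Part (ii), by contrast, is routine modulo extracting the sharp constant, and it is the estimate that will later let one trade arcs of $\Gamma_\rho$ for their chords in Steps~3 and~4.
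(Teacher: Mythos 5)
Your part (ii) is correct and close in spirit to the paper's own argument: the paper bounds the arc by $(s-t)\sqrt{1+f_\rho'(s)^2}$ and the chord from below via convexity of $\lambda\mapsto\sqrt{1+\lambda^2}$ linearized at $f_\rho'(s)$, while you Taylor-expand around the chord slope $k$; both routes reduce to the same elementary bounds on $f_\rho'$ and $f_\rho''$. Part (i) is where you genuinely depart from the paper. The paper never argues with arbitrary competitors: it invokes the classical taut-string description of the constrained minimizer --- the explicit curve $\nu_\epsilon^\rho$ made of a segment from $A_0$ tangent to $\Gamma_\rho$, an arc of $\Gamma_\rho$, and a tangent segment to $A_\epsilon$ --- and then estimates $L(\nu_\epsilon^\rho)$ through the tangency equations, Taylor expansions and the implicit function theorem. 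Your calibration by the unit tangent field $V$ of $\bar\omega_\epsilon$ is an attractive alternative, and in the monotone case it is complete: your splitting of $\int_0^\epsilon V_1'(f_\rho-f_0)\,du$ at the scale $u\sim\rho^{1/m}$ correctly yields the error $O(\rho^{1-1/m})$.

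The gap is in the non-monotone case, exactly where you flag it, and the remedy you propose does not close it. The overshoot of the up-variation of $\Phi(\zeta_2)$ beyond $\Phi(\epsilon)-\Phi(0)$ is controlled by $\sup_{[0,2\epsilon]}(V_1'f_\rho)\cdot D$, where $D$ is the down-variation of $\zeta_2$. Your a priori bounds give $D=O(\epsilon^{m-1})$ and $\sup_{[0,2\epsilon]}(V_1'f_\rho)=O(\epsilon^{m-2})$, hence an overshoot of order $\epsilon^{2m-3}$, a quantity \emph{independent of} $\rho$. But the proposition must hold with error $C(K)\rho^{1-1/m}$ uniformly over all $\rho\in(0,K\epsilon^{3\bar m-1})$, in particular for $\rho$ arbitrarily small compared to $\epsilon$: as soon as $\rho\ll\epsilon^{(2m-3)m/(m-1)}$ (a nonempty sub-range, e.g.\ $\rho<\epsilon^{9}$ when $m=5$), your argument only yields $L(\zeta)\ge L(\bar\omega_\epsilon)-C\rho^{1-1/m}-C\epsilon^{2m-3}$, strictly weaker than (\ref{19dec1}). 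This uniformity is not pedantic: the paper applies the proposition with $\rho=\beta$ (Lemma \ref{PROPPrelLEM1} (viii)) and with $\rho=(\beta\epsilon^{-1})^{m/(m-1)}$ (Step 4), where $\beta$ has no lower bound (only $|\lambda|\beta^2\ge c$, and $\lambda$ is unbounded); with an extra additive $\epsilon^{2m-3}$ the contradiction of Step 4, Case 1, namely $c\beta\epsilon^{-\bar m}\le C\beta\epsilon^{-1}$, evaporates whenever $\beta\le\epsilon^{2m-3+\bar m}$.

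The gap can be repaired inside your framework, but it needs an ingredient you did not state: do not give away the slack of the calibration on the arcs where $\dot\zeta_2<0$. There $V_2(\zeta_2)\dot\zeta_2\le 0$, so $\langle V(\zeta_2),\dot\zeta\rangle\le V_1(\zeta_2)|\dot\zeta_1|\le \bigl(\sup_{[0,2\epsilon]}V_1\bigr)|\dot\zeta|$, whence
\begin{equation*}
L(\zeta)\;\ge\;\int_0^\tau\langle V(\zeta_2),\dot\zeta\rangle\,dt+\Bigl(1-\sup_{[0,2\epsilon]}V_1\Bigr)D,\qquad \sup_{[0,2\epsilon]}V_1=O\bigl(\epsilon^{\bar m-1}\bigr).
\end{equation*}
The recovered term $(1-o(1))D$ dominates the overshoot $O(\epsilon^{m-2})D$ and absorbs it, leaving only the monotone error $O(\rho^{1-1/m})$. (Alternatively, first prove that the minimizer is monotone in $x_2$ --- essentially the paper's taut-string reduction --- after which your monotone computation suffices; as written you have neither.) A side remark: extending $V$ by $(0,1)$ on $\{x_2<0\}$ makes it a $C^1$ calibration there, so no separate argument for the strip $\{-\rho^{1/m}\le x_2<0\}$ is actually needed.
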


The next step consists in conducting a detailed analysis of the curve $\omega_{\epsilon}$ to describe its shape as precisely as possible.  To simplify notation, we now omit the $\epsilon$ subscript and write $\omega$, $\bar{\omega}$, $\theta$, and $\lambda$ instead of $\omega_{\epsilon}$, $\bar{\omega}_{\epsilon}$, $\theta_{\epsilon}$, and $\lambda_{\epsilon}$, respectively.  \\

\noindent {\bf Step 3:} Anatomy of $\omega$.\\
As we shall show, the curve $\omega$ cannot be injective and must therefore admit at least one loop. We define a loop of $\omega$ as any curve $\ell$ corresponding to the restriction of $\omega$ to an interval $J_{\ell}=[s_{\ell}^{-},s_{\ell}^{+}] \subset [0,L(\omega)]$, where $s_{\ell}^{+}\neq s_{\ell}^{-}$, such that $\omega(s_{\ell}^{-})=\omega(s_{\ell}^{+})$. The proof of the following result occupies the entire Section \ref{SECPROPMain}.

\begin{proposition}\label{PROPMain}
There are constants $\epsilon_0>0$, $c>0$, and $C>0$ such that for every $\epsilon \in (0,\epsilon_0)$, the following properties hold.
\begin{itemize}
\item[(i)] For every $t\in (0,L(\omega)]$, $\omega_1(t)>0$ and $|\omega_2(t)|\leq 2\epsilon$.
\item[(ii)] $\beta := \max_{t\in [0,L(\omega)]} | P(\omega(t))|   \leq C \, \epsilon^{3\bar{m}-1}$.
\item[(iii)] $\lambda <0$.
\item[(iv)] $P(\omega(t))>0$ for all $t\in (0,L(\omega))$.
\item[(v)] $\omega$ has a unique loop $\ell$, it satisfies
\[
\omega_2(s_{\ell}^-)\geq c \epsilon, \quad c \beta \epsilon^{-\bar{m}} \leq L(\ell)\leq C \beta^{1-1/m}, \quad \hbox{and} \quad \max_{t\in J_{\ell}} | P(\omega(t))|=\beta. \vspace{-4mm}
\]
\item[(vi)] $|\lambda| \beta^2 \geq c$.
\item[(vii)] If $t_* \in [0,L(\omega)]\setminus J_{\ell}$ is a local maximum of $t\mapsto P(\omega(t))$, then $|\lambda| P(\omega(t_*))^{1+1/\bar{m}}\leq C$.
\item[(viii)] $\int_{[0,L(\omega)]} |\dot{\theta}(t)| \, dt \leq 6\pi.$
\end{itemize}
\end{proposition}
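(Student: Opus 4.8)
The plan is to treat (i)--(viii) not in isolation but as a single coupled bootstrap on the normal extremal system \eqref{SYSomega}, anchored throughout by the two hard inputs already available: that $\omega$ is parametrized by arclength with $L(\omega)\le L(\bar\omega)$ by \eqref{Lengths}, and that it satisfies the global constraint $\int_0^{L(\omega)}P(\omega)^2\dot\omega_2\,dt=0$ from \eqref{OptimPlane}. A preliminary computation gives $L(\bar\omega)=\epsilon+O(\epsilon^{2\bar m-1})$, so the total arclength budget is $\epsilon(1+o(1))$. Since $\omega$ issues from the origin at unit speed, each point obeys $|\omega(t)|\le t\le L(\omega)\le 2\epsilon$ for $\epsilon$ small, which already yields $|\omega_2(t)|\le 2\epsilon$ in (i). For the positivity $\omega_1>0$ I would exploit that both the endpoint data and the constraint in \eqref{OptimPlane} are invariant under $(x_1,x_2)\mapsto(-x_1,x_2)$, since $P^2$ is even in $x_1$: were $\omega$ to enter $\{x_1<0\}$, reflecting the offending sub-arcs would produce a planar competitor of equal length and equal constraint value, whose horizontal lift is a minimizing geodesic carrying a corner on $\{x_1=0\}$, contradicting the Hakavuori--Le Donne no-corner theorem. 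This forces $\omega_1\ge0$, and analyticity promotes it to $\omega_1(t)>0$ for $t>0$.

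\emph{The excursion scale and the forced loop.} The decisive geometric fact is that $P^2\ge0$: the constraint $\int_0^{L(\omega)}P(\omega)^2\dot\omega_2\,dt=0$ can hold only if the downward motion of $\omega_2$ occurs precisely where $P^2$ is large, so as to cancel the upward motion; since the net vertical displacement is $+\epsilon>0$, the coordinate $\omega_2$ cannot be monotone, $\omega$ is not injective, and a loop $\ell$ appears. Note that both endpoints lie on $\{P=0\}$, because $P(A_\epsilon)=\epsilon^{2\bar m}-\epsilon^{m}=0$, so the curve bulges off the zero set and returns to it. To bound $\beta=\max|P(\omega)|$ in (ii) I would argue by contradiction through Proposition \ref{PROPsublevelsets}(i): applied with $\rho$ of order $\epsilon^{3\bar m-1}$ it gives $L(\omega)\ge L(\bar\omega)-C\rho^{1-1/m}$ whenever $\omega$ stays in $\{P\le\rho\}$, and matching this against the budget $L(\omega)\le L(\bar\omega)$ calibrates the admissible excursion to the stated scale $\beta\le C\epsilon^{3\bar m-1}$. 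The loop-length estimates in (v) then follow from Proposition \ref{PROPsublevelsets}(ii), which quantifies the length cost of a descending excursion reaching level $\beta$.

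\emph{Signs, balance, and total turning.} With $\omega_1>0$ fixed, the curvature law in \eqref{SYSomega} reads
\[
\dot\theta(t)=\lambda\,Q(\omega(t))=4\lambda\,\omega_1(t)\,P(\omega(t)),
\]
so the turning is driven by the sign of $\lambda P$. I would establish $\lambda<0$ in (iii) and $P(\omega)>0$ on $(0,L(\omega))$ in (iv) simultaneously from the single loop: closing up while placing the large values of $P^2$ on the descending branch is compatible only with $\lambda<0$ and with the loop lying on the side $\{P>0\}$, to the right of $x_1=x_2^{\bar m}$ where the endpoint already sits. Requiring the loop to turn by $\approx2\pi$, i.e.\ $4|\lambda|\int_{J_\ell}\omega_1 P\,dt\approx2\pi$, together with $\omega_1\lesssim\epsilon^{\bar m}$ and $P\le\beta$ on $J_\ell$, forces $|\lambda|\beta^2\ge c$ in (vi) and, near any secondary maximum outside $J_\ell$, the local estimate $|\lambda|P(\omega(t_*))^{1+1/\bar m}\le C$ of (vii). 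Finally, integrating the curvature law and splitting at $\ell$,
\[
\int_0^{L(\omega)}|\dot\theta|\,dt=4|\lambda|\int_0^{L(\omega)}\omega_1|P|\,dt=4|\lambda|\Bigl(\int_{J_\ell}+\int_{[0,L(\omega)]\setminus J_\ell}\Bigr)\omega_1|P|\,dt,
\]
the loop contributes an amount of order one while the complement contributes a negligible amount because $P$ hugs $0$ there by (vii); this yields the bound $6\pi$ in (viii), and a total turning below $6\pi$ in turn excludes a second loop, giving the uniqueness asserted in (v).

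\emph{Main obstacle.} The genuine difficulty is the mutual dependence of the quantitative estimates (ii), (v) and (vi): the bound on $\beta$, the length of $\ell$, and the size of $|\lambda|$ are coupled, none being available a priori, and the sharp scales $\beta\sim\epsilon^{3\bar m-1}$, $L(\ell)\sim\beta\epsilon^{-\bar m}$, $|\lambda|\sim\beta^{-2}$ emerge only after bootstrapping the length comparison \eqref{Lengths} against both inequalities of Proposition \ref{PROPsublevelsets} while tracking \eqref{SYSomega}. It is precisely here that the exponent $3\bar m-1$ and the polynomial form $P=x_1^2-x_2^m$ enter in an essential way.
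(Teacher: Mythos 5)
Two of your load-bearing steps fail as stated, and they are precisely the steps that carry the quantitative content of the proposition. First, the bound (ii) cannot be extracted from Proposition \ref{PROPsublevelsets}(i) in the way you propose: that proposition gives a \emph{lower} bound on the length of curves that \emph{remain} in $\{P\le\rho\}$, so it says nothing about $\omega$ unless you already know $\beta\le\rho$ --- if $\beta>\rho$, the hypothesis \eqref{CalcVar} simply fails for $\omega$, the proposition does not apply, and no contradiction with $L(\omega)\le L(\bar\omega)$ can arise (the two inequalities $L(\omega)\ge L(\bar\omega)-C\rho^{1-1/m}$ and $L(\omega)\le L(\bar\omega)$ are in any case perfectly compatible). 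The paper's proof of (ii) (Lemma \ref{PROPPrelLEM1}(v)) is of a different nature: the constraint $\int_\omega P^2\,dx_2=0$ gives $\int_0^{L(\omega)}P(\omega)^2\,dt=\int_0^{L(\omega)}(1-\dot\omega_2)P(\omega)^2\,dt\le\beta^2(L(\omega)-\epsilon)\le m^2\beta^2\epsilon^{m-1}$, while the Lipschitz bound $|\tfrac{d}{dt}P(\omega(t))|\le 5\epsilon^{\bar m}$ forces $\int_0^{L(\omega)}P(\omega)^2\,dt\ge\beta^3/(20\epsilon^{\bar m})$; comparing the two yields $\beta\le C\epsilon^{3\bar m-1}$. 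Second, your route to the loop is a non sequitur: non-monotonicity of $\omega_2$ does not imply that $\omega$ fails to be injective (an injective S-shaped curve has non-monotone $\omega_2$ and can in principle satisfy the constraint by placing its descending arc where $P^2$ is large). The paper rules this out with Stokes' theorem (Lemma \ref{LEMStokesIso}): if $\omega$ were injective, the closed curve $\omega*\check\omega$ would have winding numbers in $\{0,\pm1\}$, the components of index $-1$ would lie in $\{Q<0\}$ and those of index $+1$ in $\{Q>0\}$, forcing $A(\omega*\check\omega)>0$, which contradicts $A(\omega*\check\omega)=\int_\omega P^2\,dx_2=0$.

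The remaining items are asserted rather than proved, and the quantitative ones do not close. For (vi), your turning estimate gives $2\pi\lesssim 4|\lambda|\int_{J_\ell}\omega_1|P|\,dt\le 4|\lambda|\,\epsilon^{\bar m}\beta\,L(\ell)$, which yields $|\lambda|\beta^2\ge c$ only if $L(\ell)\le C\beta\epsilon^{-\bar m}$; but that is the \emph{reverse} of the lower bound asserted in (v), and the only available upper bound $L(\ell)\le C\beta^{1-1/m}$ is far too weak. The paper instead proves (vi) by integrating the derivative of $2\lambda P(\omega)^2-2\sin\theta-m\cos\theta\,\omega_2^{m-1}/\omega_1$ along the loop and choosing times with $|\sin\theta(t_2)-\sin\theta(t_1)|\ge 1$, and it separately needs the \emph{upper} bound $|\lambda|\beta_0^2\le C$ (Lemma \ref{LEM3janvier}) to obtain the lower bound on $L(\ell)$ (Lemma \ref{LEM34janvier}); your sketch conflates these two opposite inequalities. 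For (viii), the contribution outside the loop is not ``negligible'': since $|\lambda|\ge c\beta^{-2}$ is enormous, pointwise smallness of $P$ does not control $\int|\dot\theta|$; what does is that outside the loop $\dot\theta$ has constant sign (by (iii)--(iv)), so the integral is a net turning bounded via Gauss--Bonnet applied to an auxiliary simple closed curve. Moreover, uniqueness of the loop cannot come from (viii): a simple loop with curvature of constant sign turns by as little as slightly more than $\pi$ (Lemma \ref{LEMAll0}(ii)), so a $6\pi$ budget allows several loops --- and your derivation of (viii) already split the curve at ``the'' loop, so the argument is circular. In the paper, uniqueness, the sign statements (iii)--(iv), and (vii) are the heart of the matter, established through the obstruction machinery of Lemma \ref{LEMObs} (translated, rotated and reflected loops compared via the isoperimetric inequality) combined with the first-loop analysis of Proposition \ref{PROPfirstloop}; none of this is replaceable by turning-count heuristics. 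Finally, a smaller point on (i): your corner argument via Hakavuori--Le Donne handles only transversal crossings of $\{x_1=0\}$; at a tangential touch point the reflected competitor has no corner, and the paper excludes this case by uniqueness of solutions of \eqref{SYSomega}, which would force $\omega$ to be a vertical line.
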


\begin{figure}[H]
\begin{center}
\begin{tikzpicture}
\node[anchor=south west, inner sep=0] (image) at (0,0) {\includegraphics[width=6.8cm]{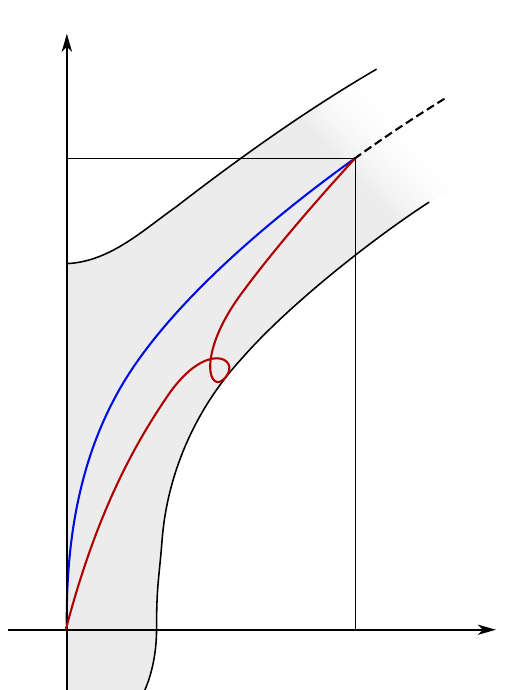}};

\begin{scope}[x={(image.south east)}, y={(image.north west)}]

    \node[black] at (0.3, 0.55) {$\bar\omega$};

    \node[black] at (0.48, 0.55) {$\omega$};

    \node at (0.7, 0.06) {$\epsilon^{m/2}$};

    \node at (0.66, 0.79) {$A_\epsilon$};

    \node at (0.09, 0.06) {$A_0$};

    \node at (0.35, 0.06) {$\beta^{1/2}$};

    \node at (0.1, 0.775) {$\epsilon$};

    \node at (0.12, 0.97) {$x_2$};

    \node at (0.97, 0.11) {$x_1$};

    \node at (0.91, 0.73) {$\{P=\beta\}$};

\end{scope}
\end{tikzpicture}
\caption{A drawing of $\bar{\omega}$ and $\omega$ \label{fig2}}
\end{center}
\end{figure}

Assertion (iii) together with (iv) and (\ref{SYSomega}), allow one to show that the signed curvature of $\omega$ is negative, helping to visualize the curve $\omega$. The proof of the above proposition is challenging and
requires ruling out the possibility that the curve has multiple loops and crosses from one side of $\{P=0, x_1\geq 0\}$ to the other. The proof follows from Stokes theorem, the isoperimetric inequalities and geometric considerations.\\

We are now ready to demonstrate how the assertions of Proposition \ref{PROPMain} can be combined with Proposition \ref{PROPsublevelsets} to reach a contradiction. \\

\noindent {\bf Step 4:} The Contradiction.\\
We consider the simple Lipschitz curve $\nu : [0,s_{\ell}^-+L(\omega)-s_{\ell}^+] \rightarrow  [0,+\infty)\times \R$, which connects $A_0$ to $A_{\epsilon}$, defined as the concatenation
\[
\nu := \omega|_{ [0,s_{\ell}^-]} * \omega|_{ [s_{\ell}^+,L(\omega)]}.
\]
We observe that
\begin{eqnarray}\label{5janv1}
L\left(\bar{\omega}\right) \geq L(\omega) = L (\nu) + L(\ell).
\end{eqnarray}
Next, we set
\[
\rho := \left( \beta \epsilon^{-1}\right)^{\frac{m}{m-1}}
\]
and we note that, by Proposition \ref{PROPMain} (ii), $\rho \leq \epsilon^{3\bar{m}-1}$ for sufficiently small $\epsilon \in (0,\epsilon_0)$. We now distinguish two cases: either $P\circ \nu \leq \rho$, or this condition does not hold. \\

\noindent Case 1: $P\circ \nu \leq \rho$\\
As $\rho \leq \epsilon^{3\bar{m}-1}$, formula~\eqref{19dec1} in Proposition \ref{PROPsublevelsets} gives $L(\nu) \geq L(\bar{\omega}) - C(1) \rho^{1-1/m}$. Combining this with (\ref{5janv1}) yields
\[
 L(\ell) \leq L \left(\bar{\omega}\right) - L(\nu) \leq C(1) \rho^{1 - \frac{1}{m}} = C(1) \beta \epsilon^{-1}.
\]
Using the lower bound for $L(\ell)$ from Proposition \ref{PROPMain} (v), we then have $c\epsilon^{-\bar{m}} \leq C(1)\epsilon^{-1}$, which leads to a contradiction for sufficiently small $\epsilon \in (0,\epsilon_0)$, as $m\geq 5$.\\

\noindent Case 2: The condition $P\circ \nu \leq \rho$ is not satisfied\\
First, we note that if $P$ reaches a local maximum at $t_* \in [0,L(\omega)] \setminus J_\ell$ with $P(\omega(t_*))\geq \rho$, then assertion (vii) of Proposition \ref{PROPMain} implies
\[
|\lambda| \beta^2 \beta^{\frac{4-m}{m-1}} \epsilon^{-\frac{m+2}{m-1}}= |\lambda| (\beta \epsilon^{-1})^{\frac{m+2}{m-1}}  = |\lambda| \rho^{1+\frac{1}{\bar{m}}} \leq |\lambda| P\left(\omega\left(t_*\right)\right)^{1+\frac{1}{\bar{m}}}\leq C,
\]
which, by Proposition \ref{PROPMain} (vi), is impossible for sufficiently small $\epsilon \in (0,\epsilon_0)$, since $m\geq 5$. Therefore, as the condition $P\circ \nu \leq \rho$  is not satisfied, we have $P(\omega(s_{\ell}^-))=P(\omega(s_{\ell}^+)) > \rho$. This allows us to  define $t^-, t^+\in [0,L(\omega]$ by
\[
t^-:= \max \Bigl\{t\in [0,s^-] \, \vert \, P(\omega(t)) =\rho \Bigr\} \quad \mbox{and} \quad  t^+:= \min \Bigl\{t\in [s^+,L(\omega)] \, \vert \, P(\omega(t)) =\rho \Bigr\},
\]
and, additionally, for sufficiently small $\epsilon\in (0,\epsilon_0)$, we have
\begin{eqnarray}\label{33janv}
P(\omega(t)) > \rho \quad \forall t \in (t^-,s_{\ell}^-]\cup [s_{\ell}^+,t^+) \quad \mbox{and} \quad P(\omega(t)) < \rho \quad \forall t \in [0,t^-) \cup (t^+,L(\omega)].
\end{eqnarray}
Then, we claim that
\begin{eqnarray}\label{6janv1}
0 < \omega_2(t^+) - \omega_2(t^-)  \leq t^+-t^- \leq 2 \beta^{\frac{1}{2}} \epsilon^{1-\bar{m}} \quad \mbox{and} \quad 0 < \omega_2(t^-) < \omega_2(t^+) \leq 2 \epsilon,
\end{eqnarray}
provided that $\epsilon \in (0,\epsilon_0)$ is sufficiently small. To prove the left inequalities, we consider the set $\mathcal{S}:= \{0\leq P\leq \rho, x_1\geq 0\}$ whose boundary is the union  of: the vertical segment $S:=[A_0,(0,-\rho^{1/m})]$, the curve $\mathcal{C}_1:=\{P=0,\, x_1\geq 0\}$, and the curve $\mathcal{C}_2:=\{P=\rho,\, x_1\geq 0\}$. The smooth curve $\omega^-:=\omega|_{[0,t^-]}$ connects $\mathcal{C}_1$ to $\mathcal{C}_2$ and does not intersect $S$ for $t \in (0,t^-)$ (by Proposition \ref{PROPMain} (i)). As a result, the support $\mbox{spt}(\omega^-)$ of $\omega^-$ divides the set $\mathcal{S}$ into two connected components: $\mathcal{S}_1$, which is bounded by $S$, the segment of $\mathcal{C}_2$ from $(0,-\rho^{1/m})$ to $\omega(t^-)$, and $\mbox{spt}(\omega^-)$; and $\mathcal{S}_2$, which is the complement of $\mathcal{S}_1$ within $\mathcal{S}\setminus \mbox{spt}(\omega^-)$. Since $\omega(L(\omega))=A_{\epsilon} \notin \mathcal{S}_1$, $P(\omega(t^+))=\rho$, and the curves $\omega^-$ and $\omega|_{ [t^+,L(\omega)]}$ do not intersect (because $\ell$ is the unique loop of $\omega$, as stated in Proposition \ref{PROPMain} (v)), it follows that $\omega(t^+) \in \mathcal{S}_2 \cap \mathcal{C}_2$, which gives $ \omega_2(t^+) > \omega_2(t^-)$. The inequality $\omega_2(t^+) - \omega_2(t^-)  \leq t^+-t^-$ holds because $|\dot{\omega}(t)|=1$ for all $t\in [0,L(\omega)]$, as  $\omega$ is parametrized by arc length. To prove the next inequality, we suppose for contradiction that $t^+-t^-> 2\alpha$ with $\alpha := \beta^{1/2}\epsilon^{1-\bar{m}}$. Hence, we have either $s_{\ell}^--t^- > \alpha$ or  $t^+ - s_{\ell}^+ > \alpha$. Assume $s_{\ell}^--t^- > \alpha$; the other case follows similarly. Set $\bar{s}:= s_{\ell}^--\alpha> t^-$. Since $\omega_2(s_{\ell}^-)\geq c \epsilon$ by Proposition \ref{PROPMain} (v), and $\omega_2$ is $1$-Lipschitz, we have $\omega_2(t) \geq c \epsilon - \alpha$ for all $t\in [\bar{s},s_{\ell}^-]$.
Thus, since $\alpha=o(\epsilon)$ by Proposition \ref{PROPMain} (ii), for sufficiently small $\epsilon\in (0,\epsilon_0)$, we have $\omega_2(t) \geq c \epsilon/2$ for all $t\in [\bar{s},s_{\ell}^-]$. As a consequence, since $P(\omega(t)) >\rho>0$ on $(\bar{s},s_{\ell}^-]$ (by (\ref{33janv})), we have $\omega_{1}(t)\geq c' \epsilon^{\bar{m}}$ for all $t\in (\bar{s},s_{\ell}^-]$, where $c':=(c/2)^{1/2}$. By applying (\ref{SYSomega}), (\ref{33janv}), and assertion (viii) of Proposition \ref{PROPMain}, we obtain
\[
6\pi\geq \int_{\bar{s}}^{s_{\ell}^-} | \dot{\theta}(t)| \, dt =  \int_{\bar{s}}^{s_{\ell}^-}4  |\lambda| \omega_1(t)P(\omega(t))\, dt
\geq 4  \epsilon^{\frac{3}{2}}  |\lambda | c' \epsilon^{\bar{m}} \rho= 4 c' |\lambda | \beta^{\frac{m}{m-1}} \epsilon^{\frac{m+3}{2}-\frac{m}{m-1}}.
\]
By assertions (ii) and (vi) of Proposition \ref{PROPMain}, we know that $\beta\leq C \epsilon^{3\bar{m}-1}$ and $|\lambda| \beta^2 \geq c$. Hence, as $m/(m-1)<2$, the above inequality shows that the quantity
\[
\beta^{\frac{m}{m-1}-2} \epsilon^{\frac{m+3}{2}-\frac{m}{m-1}} \geq \epsilon^{(3\bar{m}-1)(\frac{m}{m-1}-2)} \epsilon^{\frac{m+3}{2}-\frac{m}{m-1}} = \epsilon^{\frac{-2m^2+8m-7}{2(m-1)}}
\]
is bounded from above for all $\epsilon>0$ sufficiently small. Since the exponent of $\epsilon$ is negative, this leads to a contradiction. The left inequalities in (\ref{6janv1}) are thus established. The right inequalities follow for sufficiently small $\epsilon \in (0,\epsilon_0)$, from the fact that $\omega_2(s_{\ell}^-)\geq c \epsilon$ (Proposition \ref{PROPMain} (v)), the $1$-Lipschitz continuity of $\omega_2$,  the upper bound on $t^+-t^-$ just derived, and the upper bound for $\omega_2$ from Proposition \ref{PROPMain} (i).

We now set $s^-:=\omega_2(t^-)>0,$ and $s^+:=\omega_2(t^+)\leq 2\epsilon$, and consider the concatenated curve
\[
\bar{\nu} := \omega|_{[0,t^-]} * \Gamma_\rho([s^-,s^+]) * \omega|_{[t^+,L(\omega)]}.
\]
From the right property in (\ref{33janv}) and inequality (\ref{19dec1}) in Proposition \ref{PROPsublevelsets} (since  $\rho \leq \epsilon^{3\bar{m}-1}$), it follows that $L(\bar{\nu}) \geq L(\bar{\omega}) - C(1) \rho^{1-1/m}$. Combining this with (\ref{5janv1}), we deduce
\begin{eqnarray}
\label{eq:startAPP}
 L(\ell) \leq L(\bar{\omega}) - L(\nu) \leq L(\bar{\nu}) - L(\nu)  + C(1)\rho^{1 - \frac{1}{m}}.
\end{eqnarray}
Using (\ref{PROPsublevelsetslast}) from Proposition \ref{PROPsublevelsets} and the left inequalities in (\ref{6janv1}), we have
\begin{align*}
L(\bar{\nu}) -L(\nu) & \leq  L(\Gamma_\rho ([s^-,s^+])) - L (\nu|_{ [t^-,s_{\ell}^-]}* \nu|_{ [s_{\ell}^+,t^+]}) \\
& \leq  L(\Gamma_\rho ([s^-,s^+])) - L([\Gamma_\rho(s^-),\Gamma_\rho (s^+)]) \\
& \leq \frac{ \bar{m}^2}{2} \left(\bar{m} -1\right) (s^+)^{m-3} (s^+-s^-)^2  \leq  2^{m-5}m^2 (m-2) \beta \epsilon^{-1}.
\end{align*}
Using the lower bound for $L(\ell)$ from Proposition \ref{PROPMain} (v) and (\ref{eq:startAPP}), we infer that $\epsilon^{-\bar{m}} \leq C'\epsilon^{-1}$ for some constant $C'>0$. Since $m\geq 5$, this leads to a contradiction for sufficiently small $\epsilon \in (0,\epsilon_0)$. This ends the proof of Theorem~\ref{MainTHM}.

\section{Proof of Proposition \ref{PROPMain}}\label{SECPROPMain}

Recall that $\omega : I_{\omega} \rightarrow \R^2$, where $I_{\omega}:=[0,L(\omega)]$, is a fixed curve satisfying (\ref{OptimPlane}) and (\ref{SYSomega}), and that $\bar{\omega}:=\bar{\omega}_{\epsilon}$ represents the projection of $\bar{\gamma}_{\epsilon}:=\bar{\gamma}|_{ [0,\epsilon]}$ onto the $(x_1,x_2)$-plane. By assumption, we have $L(\omega)\leq L(\bar{\omega})$. Recall that $m\geq 5$ is an odd integer, $\bar{m}:=m/2$, and that a loop of $\omega$ is a curve $\ell$ defined as the restriction of $\omega$ to some interval $J_{\ell}=[s_{\ell}^-,s_{\ell}^+] \subset I_{\omega}$, with $s_{\ell}^{+}\neq s_{\ell}^{-}$, such that $\omega(s_{\ell}^-)= \omega(s_{\ell}^+)$. We always denote by $J_{\ell}=[s_{\ell}^-,s_{\ell}^+] \subset I_{\omega}$ the interval associated with a loop $\ell$ of $\omega$, and we define $\mbox{int}(J_{\ell})=(s_{\ell}^-,s_{\ell}^+)$. Additionally, a loop $\ell$ is said to be simple if $\omega$ is injective on $[s_{\ell}^-,s_{\ell}^+)$.

Before beginning the proof of Proposition \ref{PROPMain}, we provide, in the next two sections, preliminary reminders about Stokes' theorem, the isoperimetric inequality, and Gauss--Bonnet formula for curves in the plane.

\subsection{Reminders on Stokes' theorem and the isoperimetric inequality}\label{SECReminders}

A Lipschitz curve is a curve $\eta:[0,\tau] \rightarrow \R^2$ that is Lipschitz continuous. The support of $\eta$, denoted by $\mbox{spt}(\eta)$, is defined as $\mbox{spt}(\eta):=\eta([0,\tau])$. We say that $\eta$ is closed if it satisfies $\eta(0)=\eta(\tau)$. For a Lipschitz closed curve $\eta$, the winding number (or index) of a point $x \in \R^2 \setminus \mbox{spt}(\eta)$ with respect to $\eta$ is denoted by $\mbox{ind}(x,\eta) \in \Z$. By convention, $\mbox{ind}(0,\eta)=+1$ when $\eta(t)=(\cos t, \sin t)$ for $t\in [0,2\pi]$. For each $k\in \Z$, we set
\[
\mathcal{E}_k(\eta) := \Bigl\{ x \in \R^2 \setminus \mbox{spt}(\eta) \, \vert \, \mbox{ind}(x,\eta) = k \Bigr\},
\]
and we define $\mathcal{E}\subset \R^2 \setminus \mbox{spt}(\eta)$ as the bounded open set
\[
\mathcal{E}(\eta) := \bigcup_{k\in \Z, k\neq 0} \mathcal{E}_k(\eta).
\]
Recalling that $Q$ is defined by $Q(x)=4x_1P(x)=4x_1(x_1^2-x_2^m)$, we define the weighted area of $\eta$ as
\[
A(\eta) := \sum_{k\in \Z} k \, \mathcal{L}_Q\left(\mathcal{E}_k(\eta)\right),
\]
where
\[
\mathcal{L}_Q\left(\mathcal{E}_k(\eta)\right) := \int_{\mathcal{E}_k(\eta)} Q(x) \, dx \qquad \forall k \in \Z.
\]
In the following result, assertion (i) follows from Stokes' theorem and (ii) is a consequence of Rad\'o's inequality in the plane. For two Lipschitz curves $\eta:[0,\tau] \rightarrow \R^2$ and $\eta':[0,\tau'] \rightarrow \R^2$ such that $\eta(\tau)=\eta'(0)$, we use the notation $\eta *\eta'$ to denote their concatenation on $[0,\tau+\tau']$. The reversed curve $\check{\omega}:[0,\epsilon] \rightarrow \R^2$ is defined by $\check{\omega}(t):= \bar{\omega}(\epsilon-t)$ for $t\in [0,\epsilon]$.

\begin{lemma}\label{LEMStokesIso}
Let $\eta:[0,\tau] \rightarrow \R^2$ be a Lipschitz curve. 
\begin{itemize}
\item[(i)] Assume that $\eta(0), \eta(\tau) \in \mbox{\rm spt}(\bar{\omega})$. Let $\check{\omega}^\eta$ denote the segment of $\bar{\omega}$ connecting $\eta(\tau)$ to $\eta(0)$. Then, the concatenation $\eta * \check{\omega}^\eta$ forms a closed curve and we have:
\[
A\left(\eta * \check{\omega}^\eta \right)  = \int_{\eta} P(x)^2 \, dx_2.
\]
In particular, for $\eta=\omega$ we have
\[
A(\omega * \check{\omega})  = \int_{\omega} P(x)^2 \, dx_2 = 0.
\]
\item[(ii)] If $\eta$ is closed, the weighted area $A(\eta)$ satisfies the inequality
\[
4\pi |A(\eta)| \leq \sup_{x\in \mathcal{E}(\eta)} |Q(x)| \, L(\eta)^2.
\]
\end{itemize}
\end{lemma}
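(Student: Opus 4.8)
The plan is to reduce both assertions to the single algebraic fact that the weight $Q$ is an exact divergence in the $x_1$-direction, $Q = \partial_{x_1}(P^2)$, which lets one rewrite the weighted area $A$ as a line integral of the $1$-form $P^2\,dx_2$. The tool is the winding-number (degree-theoretic) form of Green's theorem: for any closed Lipschitz curve $\eta$ and any $C^1$ function $g$ on $\R^2$,
\[
\int_\eta g\,dx_2 \;=\; \int_{\R^2} \mathrm{ind}(x,\eta)\,\partial_{x_1}g(x)\,dx \;=\; \sum_{k\in\Z} k \int_{\mathcal{E}_k(\eta)} \partial_{x_1}g\,(x)\,dx,
\]
since $\mathrm{ind}(\cdot,\eta)$ equals $k$ on $\mathcal{E}_k(\eta)$ and $0$ off $\mathcal{E}(\eta)$. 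Taking $g=P^2$ and using $\partial_{x_1}(P^2)=Q$ yields $\int_\eta P^2\,dx_2 = A(\eta)$ for \emph{every} closed Lipschitz curve $\eta$.

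For (i), I would apply this to the closed curve $\eta * \check{\omega}^\eta$, giving
\[
A\bigl(\eta * \check{\omega}^\eta\bigr) = \int_{\eta * \check{\omega}^\eta} P^2\,dx_2 = \int_{\eta} P^2\,dx_2 + \int_{\check{\omega}^\eta} P^2\,dx_2 .
\]
The point is that $\check{\omega}^\eta$ is a sub-arc of $\bar\omega$, the projection of $\bar\gamma(t)=(t^{\bar m},t,0)$; along it $P = x_1^2-x_2^m = t^{2\bar m}-t^m = 0$, so the last integral vanishes and $A(\eta * \check{\omega}^\eta)=\int_\eta P^2\,dx_2$. The special case $\eta=\omega$ then follows because $\check\omega^\omega = \check\omega$ and the admissibility constraint (\ref{OptimPlane}) gives $\int_\omega P^2\,dx_2 = 0$.

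For (ii), I would estimate directly from the definition of $A$. Since the $k=0$ term does not contribute and only the values of $Q$ on $\bigcup_{k\neq0}\mathcal{E}_k(\eta)=\mathcal{E}(\eta)$ enter,
\[
|A(\eta)| \le \sup_{x\in\mathcal{E}(\eta)}|Q(x)| \sum_{k\in\Z}|k|\,\mathcal{L}\bigl(\mathcal{E}_k(\eta)\bigr),
\]
with $\mathcal{L}$ the planar Lebesgue measure. Recognizing $\sum_k|k|\,\mathcal{L}(\mathcal{E}_k(\eta)) = \int_{\R^2}|\mathrm{ind}(x,\eta)|\,dx$ and invoking Rad\'o's inequality $\int_{\R^2}|\mathrm{ind}(x,\eta)|\,dx \le L(\eta)^2/(4\pi)$ (the extension of the planar isoperimetric inequality to non-simple closed curves, with equality for a simple circle) gives $4\pi|A(\eta)| \le \sup_{\mathcal{E}(\eta)}|Q|\,L(\eta)^2$.

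The main obstacle is the rigorous justification of the winding-number Green's theorem for curves that are merely Lipschitz rather than piecewise $C^1$: one must check that $\mathcal{E}(\eta)$ is a bounded set of finite measure (so the integrals against the polynomial $Q$ converge) and that the degree formula is valid for the absolutely continuous parametrization in use. This is classical but warrants a precise reference. Once it is in place, the cancellation of the $\bar\omega$-contribution in (i) and the invocation of Rad\'o's inequality in (ii) are routine.
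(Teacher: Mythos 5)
Your proposal is correct and follows essentially the same route as the paper: the winding-number form of Green/Stokes applied to $g=P^2$ with $\partial_{x_1}(P^2)=Q$, the vanishing of $P$ along the sub-arc of $\bar{\omega}$ for (i), and Rad\'o's isoperimetric inequality (via $\sum_k |k|\,\mathcal{L}(\mathcal{E}_k(\eta)) \le L(\eta)^2/(4\pi)$) for (ii). The paper's proof is exactly this argument, citing \cite{rado47} and \cite{osserman78} for the last step.
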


\begin{proof}
Let $\eta:[0,\tau] \rightarrow \R^2$ be a Lipschitz curve. If $\eta(0), \eta(\tau) \in \mbox{spt}(\bar{\omega})$, then the concatenation $\eta * \check{\omega}^\eta$ forms a closed curve. By Stokes' theorem 
and noting that $P$ vanishes along $\check{\omega}^\eta$, we have
\[
A\left(\eta * \check{\omega}^\eta \right) = \sum_{k\in \Z, k\neq 0} k \int_{\mathcal{E}_k\left(\eta * \check{\omega}^\eta \right)} \frac{\partial P^2}{\partial x_1} (x) \, dx = \int_{\eta * \check{\omega}^\eta} P^2 \, dx_2 = \int_{\eta} P^2 \, dx_2,
\]
which proves the first part of (i). For $\eta$ closed, denoting by $\mathcal{L}$ the Lebesgue measure in the plane and using the definition of $A(\eta)$, we have
\[
|A(\eta)| \leq \sum_{k\in\Z} |k| \, |\mathcal{L}_Q(\mathcal{E}_k(\eta))| \leq \sup_{x\in \mathcal{E}(\eta)}|Q(x)| \, \sum_{k\in\Z} |k| \, |\mathcal{L}(\mathcal{E}_k(\eta))| \leq \sup_{x\in \mathcal{E}(\eta)}|Q(x)| \, \frac{L(\eta)^2}{4\pi},
\]
where the last inequality follows from Rad\'o's isoperimetric inequality in the plane (see \cite{rado47} and formula (1.9) in \cite{osserman78}), which proves (ii).
\end{proof}

\subsection{Curves with curvature of constant sign in the plane}\label{SECGaussBonnet}

We begin by recalling the definition of the signed curvature of a smooth curve. Let $\eta:[0,\tau]\rightarrow \R^2$ be a smooth curve parametrized by arc length. For every $t\in [0,\tau]$, the signed curvature of $\eta$ at $t$ is the unique value $\kappa(t) \in \R$ such that
\[
\ddot{\eta}(t) = \kappa(t) n(t),
\]
where $n(t)$ is obtained by rotating the nonzero vector $\dot{\eta}(t)$ counterclockwise by an angle $\pi/2$. If $\alpha : [0,\tau] \rightarrow \R$ is a smooth function satisfying $\dot{\eta}(t)=(\cos\alpha(t),\sin \alpha(t))$ for all $t\in [0,\tau]$ (such a function always exists), then the signed curvature is given by
\begin{eqnarray}\label{GaussBonnet1}
\kappa(t) = \dot{\alpha}(t) \qquad \forall t \in [0,\tau].
\end{eqnarray}
Next, we define a piecewise smooth continuous curve $\eta:[0,\tau] \rightarrow \R^2$ as a continuous curve for which there exist times $0=\tau_0 < \tau_1 < \cdots <\tau_N = \tau$ such that each restriction $\eta|_{ [\tau_i,\tau_{i+1}]}$, for $i=0,\ldots, N-1$, is smooth. For each $i=0, \ldots, N-1$, we denote by $\dot{\eta}(\tau_i^-)$ (resp. $\dot{\eta}(\tau_i^+)$) the left (resp. right) derivative of $\eta$ at $\tau_i$, with the conventions $\dot{\eta}(\tau_0^-):=\dot{\eta}(\tau_N)$ and $\dot{\eta}(\tau_N^+):=\dot{\eta}(\tau_0)$. We then consider such a curve $\eta$, assuming that it is closed, parametrized by arc length, and simple, meaning that the restriction $\eta|_{ [0,\tau)}$ is injective. By the Jordan curve theorem, $\eta$ divides the plane into two open sets: a bounded domain $\mathcal{D}(\eta)$ and its complement. In this setting, we have
\begin{eqnarray}\label{EQoriented}
\mathcal{D}(\eta) = \mathcal{E}_{k}(\eta) = \mathcal{E}(\eta),
\end{eqnarray}
where $k=1$ if $\eta$ is positively oriented and $k=-1$ otherwise. Recall that if $v,w$ are two nonzero vectors in $\R^2$, the oriented angle from $v$ to $w$, denoted by $\mbox{ang}(v,w)$, is defined to lie in the interval $(-\pi,\pi)$, with a positive sign if $(v,w)$ forms an oriented basis of $\R^2$, and a negative sign otherwise. For any nonzero vectors $v,w \in \R^2$, let $\mbox{Tan}^1(v,w)$ (resp. $\mbox{Tan}^{-1}(v,w)$) denote the open set of nonzero vectors $u \in \R^2$ such that $\mbox{ang}(v,u) \in (\mbox{ang}(v,w),\pi)$ (resp. $\mbox{ang}(v,u) \in (-\pi,\mbox{ang}(v,w))$). The relation (\ref{EQoriented}) implies the following result.

\begin{lemma}\label{LEM2025}
Let $\eta:[0,\tau] \rightarrow \R^2$ be a piecewise smooth continuous curve which  is closed, simple, parametrized by arc length, and satisfies (\ref{EQoriented}) with $k=\pm 1$. Then, for any $t\in [0,\tau]$ and every $u\in \mbox{\rm Tan}^k(\dot{\eta}(t^-),\dot{\eta}(t^+))$, we have
\[
\eta(t) + s k \, u \in  \mathcal{E}(\eta) \qquad \forall s >0 \mbox{ small}.
\]
\end{lemma}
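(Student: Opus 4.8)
The plan is to argue entirely locally near the point $p:=\eta(t)$, exploiting that by (\ref{EQoriented}) the set $\mathcal{E}(\eta)=\mathcal{D}(\eta)$ is the bounded Jordan domain of $\eta$, whose topological boundary is $\mbox{spt}(\eta)$ and whose complement has exactly two connected components: $\mathcal{D}(\eta)$, on which $\mbox{ind}(\cdot,\eta)\equiv k$, and the unbounded one, on which $\mbox{ind}(\cdot,\eta)\equiv 0$. Write $v:=\dot{\eta}(t^-)$ and $w:=\dot{\eta}(t^+)$ (unit vectors, with $v=w$ when $t$ is not a corner, and $w\neq -v$ so that $\mbox{ang}(v,w)$ is well defined). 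I would first reduce to the case $k=1$: reversing the parametrization of $\eta$ leaves $\mbox{spt}(\eta)$ and $\mathcal{E}(\eta)$ unchanged, turns $k$ into $-k$, and replaces the pair $(v,w)$ at $p$ by $(-w,-v)$, so the general statement follows from the positively oriented one by the corresponding (orientation-sensitive) substitution in the definition of $\mbox{Tan}^{k}$.

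Next I would localize. Since $\eta$ is simple and closed, for $r>0$ small the set $\mbox{spt}(\eta)\cap B(p,r)$ is a single smooth arc through $p$ when $t$ is a smooth time, and the union of two smooth arcs meeting only at $p$ when $t=\tau_i$ is a corner; in both cases this set is a simple (possibly bent) arc crossing the disk from boundary to boundary through $p$, so $B(p,r)\setminus\mbox{spt}(\eta)$ has exactly two connected components. Each such component is connected and disjoint from $\mbox{spt}(\eta)$, hence contained in one of the two global components; and since $p\in\partial\mathcal{D}(\eta)$ lies in the closure of both the interior and the exterior, one local component must be contained in $\mathcal{D}(\eta)$ and the other in the unbounded component.

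It then remains to identify the interior component and to match it with the cone $\mbox{Tan}^{k}(v,w)$. Here I would invoke the classical crossing rule for the winding number: moving transversally across a smooth arc of $\eta$ from the right of the oriented tangent to its left increases $\mbox{ind}(\cdot,\eta)$ by exactly $1$ (this follows from the formula $\mbox{ind}(x,\eta)=\tfrac{1}{2\pi}\oint_{\eta}d\arg(\eta-x)$, the contribution of the part of $\eta$ away from $p$ being continuous across $p$, while the local arc contributes a jump of $2\pi$). For $k=1$ this forces $\mathcal{D}(\eta)$ to be the local component lying to the left of $\eta$ near $p$. At a corner this interior region is the open sector based at $p$ bounded by the outgoing tangent ray $\R_{>0}w$ and the reversed incoming ray $\R_{>0}(-v)$ and swept counterclockwise from $w$ to $-v$; translating this into the oriented angle measured from $v$ gives precisely $\{u\neq 0:\ \mbox{ang}(v,u)\in(\mbox{ang}(v,w),\pi)\}=\mbox{Tan}^{1}(v,w)$ (at a smooth time $v=w$ and the two rays form the tangent line, so the sector is the open half-plane to the left of $v$). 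Since any $u\in\mbox{Tan}^{1}(v,w)$ points strictly inside this open sector, away from both tangent rays, the segment $\{p+su:\ 0<s\le s_0\}$ avoids $\mbox{spt}(\eta)$ and lies in $\mathcal{D}(\eta)=\mathcal{E}(\eta)$ for $s_0$ small; transporting this back through the orientation reversal yields the stated conclusion $\eta(t)+s\,k\,u\in\mathcal{E}(\eta)$ for all $u\in\mbox{Tan}^{k}(v,w)$ and $s>0$ small.

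The main obstacle is the corner case: one must verify, carefully and uniformly over the convex and reflex configurations, that the local component carrying the correct winding number is exactly the sector encoded by the angle condition defining $\mbox{Tan}^{k}$, keeping precise track of the orientation sign when passing from $k=1$ to $k=-1$. The two supporting facts—that $\eta$ being simple confines $\mbox{spt}(\eta)\cap B(p,r)$ to the one or two local arcs, and the winding-number crossing rule—are standard, so the real work is this orientation-sensitive bookkeeping, together with the mild hypothesis $w\neq -v$, which excludes cusps and makes $\mbox{ang}(v,w)$ meaningful.
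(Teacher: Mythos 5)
Your local analysis for $k=1$ is correct and is essentially the argument the paper leaves implicit (the paper offers no written proof of Lemma~\ref{LEM2025}; it is asserted to follow from (\ref{EQoriented})): near $p=\eta(t)$ the support consists of one or two embedded arcs, $B(p,r)\setminus \mbox{spt}(\eta)$ has exactly two components, the winding-number crossing rule identifies the interior component, for a positively oriented curve, with the open sector swept counterclockwise from $w:=\dot\eta(t^+)$ to $-v$, $v:=\dot\eta(t^-)$, and that sector is exactly $\mbox{\rm Tan}^1(v,w)$. All of this is sound.

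The genuine gap is in the step you flag as ``orientation-sensitive bookkeeping'' and then do not carry out. Do carry it out: reversing the parametrization makes the curve positively oriented and replaces the corner data $(v,w)$ by $(-w,-v)$, so the $k=1$ case gives $\eta(t)+su\in\mathcal{E}(\eta)$ for every $u\in\mbox{\rm Tan}^1(-w,-v)$. But the angle identity $\mbox{ang}(-w,u)\equiv \pi-\mbox{ang}(v,w)+\mbox{ang}(v,u) \ (\mbox{mod } 2\pi)$ shows that $\mbox{\rm Tan}^1(-w,-v)=\mbox{\rm Tan}^{-1}(v,w)$ \emph{as sets} --- no negation of $u$ appears. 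Hence what the reversal actually yields for $k=-1$ is $\eta(t)+s\,u\in\mathcal{E}(\eta)$ for $u\in\mbox{\rm Tan}^{-1}(v,w)$, i.e.\ the conclusion \emph{without} the factor $k$, not the printed $\eta(t)-s\,u\in\mathcal{E}(\eta)$. The two differ, and the printed version is in fact false for $k=-1$: for the clockwise circle $\eta(t)=(\cos t,-\sin t)$ at $t=0$ one has $v=w=(0,-1)$, $\mbox{\rm Tan}^{-1}(v,w)=\{u\,\vert\, u_1<0\}$, and $\eta(0)-su=(1-su_1,-su_2)$ has first coordinate larger than $1$, hence lies outside the disc $\mathcal{E}(\eta)$. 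So the factor $k$ in the displayed conclusion is a sign error in the statement itself (harmless for the paper, which only ever invokes the lemma with $k=1$, e.g.\ in the proof of Lemma~\ref{LEMAll0}); your proposal, by asserting that the transport ``yields the stated conclusion,'' papers over this sign error rather than resolving it. A correct write-up would prove the $k=1$ case exactly as you do, and then record the $k=-1$ conclusion in the corrected form $\eta(t)+s\,u\in\mathcal{E}(\eta)$.
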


Let us now assume that $\eta$ is positively oriented. For each  $i=0, \ldots, N-1$, if $\dot{\eta}(\tau_i^-) \neq \dot{\eta}(\tau_i^+)$, we define the discontinuity of the curvature at $\tau_i$ as the oriented angle $\delta_i = \mbox{ang}(\dot{\eta}(\tau_i^-),\dot{\eta}(\tau_i^+))$.  However, if  $\dot{\eta}(\tau_i^-) = -\dot{\eta}(\tau_i^+)$, meaning that $\eta$ has a cusp at $\eta(\tau_i)$, we follow the convention in \cite[Chapter 6]{spivak99}: if the cusp points toward $\mathcal{D}(\eta)$, we set $\delta_i:=-\pi$, otherwise, we set $\delta_i=\pi$. In any case, the discontinuity of the curvature $\delta_i$ belongs to $[-\pi,\pi]$. The Gauss--Bonnet formula then states
\begin{eqnarray}\label{GaussBonnet}
2 \pi = \sum_{i=0}^{N-1} \int_{\tau_i}^{\tau_{i+1}} \kappa(t) \, dt +  \sum_{i=0}^{N-1} \delta_i.
\end{eqnarray}
From this, we can easily deduce the following result.

\begin{lemma}\label{LEMGaussBonnet1}
Let $\eta:[0,\tau] \rightarrow \R^2$ be a piecewise smooth continuous curve associated with times $0=\tau_0 < \tau_1 < \cdots < \tau_N = \tau$, which  is closed, simple, and parametrized by arc length. Let $\sigma=\pm 1$ be fixed.
\begin{itemize}
\item[(i)] If for every $i=0, \ldots, N-1$, the smooth signed curvature $\kappa$ of the segment $\eta|_{ [\tau_i,\tau_{i+1}]}$ satisfies $\sigma \kappa\geq 0$, and the discontinuity of the curvature $\delta_i$ at $\tau_i$ satisfies $\sigma \delta_i\in  [0,\pi]$, then the set $\mathcal{D}(\eta)$ is convex, $\mathcal{D}(\eta) = \mathcal{E}_{\sigma}(\eta)= \mathcal{E}(\eta)$, and for every $i=0,\ldots, N-1$, $\sigma \delta_i = \sigma \mbox{\rm ang}(\dot{\eta}(\tau_i^-),\dot{\eta}(\tau_i^+)) \in [0,\pi)$.
\item[(ii)] If there exist indices $i_1 \neq i_2$ in $\{0,\ldots, N-1\}$ such that
\[
\sum_{i=0}^{N-1} \int_{\tau_i}^{\tau_{i+1}} \sigma\kappa(t)\, dt \geq 0 \quad \mbox{and} \quad \sigma \delta_i\in [0,\pi] \quad \forall i \in \{0,\ldots, N-1\} \setminus \{i_1,i_2\},
\]
then $\mathcal{D}(\eta) = \mathcal{E}_{\sigma}(\eta) = \mathcal{E}(\eta)$.
\end{itemize}
\end{lemma}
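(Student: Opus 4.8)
The plan is to derive both statements directly from the Gauss--Bonnet formula (\ref{GaussBonnet}) and the identification (\ref{EQoriented}). The key observation is that, by (\ref{EQoriented}), one always has $\mathcal{D}(\eta)=\mathcal{E}_{k}(\eta)=\mathcal{E}(\eta)$, where $k=\pm1$ is the orientation sign of $\eta$; hence the assertion $\mathcal{D}(\eta)=\mathcal{E}_{\sigma}(\eta)=\mathcal{E}(\eta)$ in both (i) and (ii) amounts to showing that $k=\sigma$, i.e. that the orientation of $\eta$ is exactly the one prescribed by $\sigma$. Everything therefore reduces to computing the total turning of the tangent vector along $\eta$. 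I would introduce a tangent-angle function $\alpha$, smooth on each $[\tau_i,\tau_{i+1}]$ with $\dot\alpha=\kappa$, and jumping by the exterior angle $\delta_i$ at each $\tau_i$; by the theorem of turning tangents, of which (\ref{GaussBonnet}) is the positively oriented instance, the total variation $\sum_i\int_{\tau_i}^{\tau_{i+1}}\kappa\,dt+\sum_i\delta_i$ equals $2\pi k$.

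For (i), I would multiply this identity by $\sigma$ and use the hypotheses $\sigma\kappa\ge0$ and $\sigma\delta_i\in[0,\pi]$: every summand becomes nonnegative, so $2\pi\sigma k\ge0$, which forces $\sigma k=1$, i.e. $k=\sigma$. This already gives $\mathcal{D}(\eta)=\mathcal{E}_{\sigma}(\eta)=\mathcal{E}(\eta)$. Convexity then follows because $\sigma\alpha$ is nondecreasing (its derivative $\sigma\kappa$ and its jumps $\sigma\delta_i$ are all $\ge0$) and increases by exactly $\sigma\cdot2\pi k=2\pi$ over one period, which is the classical characterization of the boundary of a convex body; here Lemma~\ref{LEM2025} can be used to keep track of which side of $\eta$ the domain lies on. Finally, since $\mathcal{D}(\eta)$ is a nonempty open convex set, its boundary carries no cusp: an exterior angle equal to $\pm\pi$ would produce a zero-width spike incompatible with nonempty interior. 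Thus no $\delta_i$ equals $\pm\pi$, each $\delta_i$ is the genuine oriented angle $\mathrm{ang}(\dot\eta(\tau_i^-),\dot\eta(\tau_i^+))$, and the sign condition upgrades to $\sigma\delta_i\in[0,\pi)$.

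For (ii), only the orientation statement $k=\sigma$ is needed, and I would run the same turning computation, now isolating the two uncontrolled indices: $2\pi\sigma k=\sum_i\int\sigma\kappa+\sum_{i\neq i_1,i_2}\sigma\delta_i+\sigma\delta_{i_1}+\sigma\delta_{i_2}$. The first two groups are $\ge0$ by hypothesis, while every exterior angle satisfies $\sigma\delta_{i_j}\ge-\pi$, so $2\pi\sigma k\ge-2\pi$. As $\sigma k\in\{+1,-1\}$, this forces $\sigma k=+1$ unless the extreme value $-2\pi$ is attained, which requires $\sum_i\int\sigma\kappa=0$, $\sigma\delta_i=0$ for every $i\neq i_1,i_2$, and $\sigma\delta_{i_1}=\sigma\delta_{i_2}=-\pi$; the last condition means that $i_1,i_2$ are cusps while the remaining vertices are not genuine corners. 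I would rule out this borderline configuration using the simplicity of $\eta$: a simple closed curve with vanishing net smooth turning, no effective corners, and exactly two reversals of the tangent direction is forced to retrace a lower-dimensional set and cannot bound a nonempty region $\mathcal{D}(\eta)$. Hence $\sigma k=+1$, and (\ref{EQoriented}) concludes.

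The main obstacle is the orientation determination, and within it the bookkeeping of the exterior-angle convention at cusps: the value $\delta_i=\pm\pi$ at a cusp is defined in the excerpt relative to $\mathcal{D}(\eta)$ rather than to the direction of traversal, so one must verify that a cusp adjoining arcs with $\sigma\kappa\ge0$ contributes $+\pi$ (and not $-\pi$) to the $\sigma$-turning. This is precisely what makes the nonnegativity argument in (i) valid and what singles out the unique borderline configuration in (ii). The second delicate point, confined to (ii), is the exclusion of that saturating degenerate configuration, which cannot be killed by the curvature and angle signs alone and must instead be eliminated by invoking the simplicity of $\eta$.
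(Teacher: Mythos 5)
For part (ii) your argument is essentially the paper's: reversing the curve and applying the positively oriented Gauss--Bonnet formula (\ref{GaussBonnet}), as the paper does, is the same computation as your signed turning identity, and both proofs end by isolating the saturated equality case and dismissing it via simplicity. For part (i), by contrast, your route is genuinely different: you determine the orientation first by a sign argument and then deduce convexity from monotonicity of the tangent angle, whereas the paper proves convexity first, from local supporting lines and Tietze's theorem, and only afterwards reads off the orientation from (\ref{EQoriented}).

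However, two steps fail as written. In (i), the cusp verification you defer cannot be carried out, because the claim is false: the sign of a cusp's contribution to the turning is not determined by the sign of the curvature of the adjacent arcs. The astroid traversed clockwise has $\kappa>0$ on its four smooth arcs, yet each of its four cusps contributes $-\pi$ to the turning (smooth turning $+2\pi$, total $-2\pi$); the cardioid traversed clockwise has $\kappa<0$ on its smooth arc while its cusp contributes $+\pi$. The sign of a cusp's turning contribution is in fact equivalent to the orientation statement you are trying to prove, so any such ``verification'' is circular. Your proof of (i) closes only if the hypothesis $\sigma\delta_i\in[0,\pi]$ is read with the traversal (limit-of-smoothing) convention at cusps, in which case there is nothing to verify; and this reading is forced, since under the region-based convention quoted in the paper the clockwise astroid satisfies every hypothesis of (i) with $\sigma=+1$ while bounding a nonconvex region of winding number $-1$.

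In (ii) the gap is one you share with the paper, whose proof asserts at the same point that equality ``forces $\tilde\kappa\equiv0$''. Since the curvature hypothesis in (ii) is only integral, equality forces only $\sum_i\int\sigma\kappa\,dt=0$, and net-zero smooth turning does not degenerate the curve. Concretely, the boundary of the lens $\bigl\{(x_1,x_2)\,:\,0\le x_1\le 1,\ |x_2|\le x_1^2(1-x_1)^2\bigr\}$ is a simple closed curve with exactly two discontinuities, both cusps, no other corners, and zero net smooth turning; taking $i_1,i_2$ to be the two cusps, it satisfies the hypotheses of (ii) for both $\sigma=+1$ and $\sigma=-1$, so the conclusion necessarily fails for one of the two signs. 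Thus neither your argument nor the paper's establishes (ii) as stated. Both are repaired by assuming $\sigma\kappa\ge0$ pointwise in (ii) --- which is what holds in every application in the paper, via Lemma \ref{LEMAll0} (i) --- since then equality forces every smooth piece to be a straight segment, and two distinct segments cannot join the same pair of cusp points, contradicting simplicity.
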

\begin{proof}
To prove (i), we observe that under the given assumption, the set $\mathcal{D}(\eta)$ admits a local supporting line at each point of its boundary $\partial \mathcal{D}(\eta)$. Specifically, for every $x\in \partial \mathcal{D}(\eta)$, there exist a neighborhood $U$ of $x$ and a closed half-plane $P$, bounded by a line $L$, such that $x\in L$ and $U \cap \mathcal{D}(\eta) \subset P$. By Tietze's theorem~\cite{valentine64}, this implies that $\mathcal{D}(\eta)$ is convex and that the oriented angles $\sigma \delta_0, \ldots, \sigma \delta_{N-1}$ lie within $[0,\pi)$. Moreover, if $\sigma=1$ then $\eta$ is positively oriented, which, by (\ref{EQoriented}), implies $\mathcal{D}(\eta) = \mathcal{E}_{1}(\eta)= \mathcal{E}(\eta)$. Conversely, if $\sigma=-1$, then $\eta$ is negatively oriented, leading to $\mathcal{D}(\eta) = \mathcal{E}_{-1}(\eta)= \mathcal{E}(\eta)$ (by (\ref{EQoriented})).

To prove (ii), using (\ref{EQoriented}), it suffices to show that $\eta$ is positively oriented when $\sigma=1$. Assume $\sigma=1$ and suppose, for the sake of contradiction, that $\eta$ is negatively oriented. Define the curve $\tilde{\eta}:[0,\tau] \rightarrow \R^2$ by $\tilde{\eta}(t):=\eta(\tau -t)$ for all $t\in [0,\tau]$. Let $\tilde{\delta}_0, \ldots, \tilde{\delta}_{N-1}\in [-\pi,\pi]$ denote the discontinuities of the signed curvature at $t=0$, $t=\tau-\tau_{N-1}, \ldots, t=\tau-\tau_1$. The curve $\tilde{\eta}$ is positively oriented by construction. Furthermore, by assumption,  the integral of the signed curvature $\tilde{\kappa}$ of $\tilde{\eta}$ over the set $[0,\tau-\tau_{N-1}] \cup \cdots \cup [\tau-\tau_1,\tau]$ is nonpositive, and the discontinuities satisfy $\tilde{\delta}_i \in [-\pi,0]$ for all $i\in \{0,\ldots, N-1\}\setminus \{i_1,i_2\}$. By the Gauss--Bonnet formula, we have
\[
2\pi = \sum_{i=0}^{N-1} \int_{\tau_i}^{\tau_{i+1}} \tilde{\kappa}(t)\, dt + \sum_{i=0, i\neq i_1,i_2}^{N-1}  \tilde{\delta}_i \, + \tilde{\delta}_{i_1} + \tilde{\delta}_{i_2} \leq  \tilde{\delta}_{i_1} + \tilde{\delta}_{i_2} \leq 2\pi.
\]
Hence, equality holds, which forces $\tilde{\kappa}\equiv 0$ and $\tilde{\delta}_i=0$ for all $i\neq i_1, i_2$. Since $\eta$ is simple, such a configuration cannot arise. We obtain a contradiction.
\end{proof}

The following result, derived from the Gauss--Bonnet formula, will also be instrumental in the proof of Proposition \ref{PROPMain}.

\begin{lemma}\label{LEMGaussBonnet}
Let $\eta : [0,\tau] \rightarrow \R^2$ be a smooth curve parametrized by arc length, and let $\sigma=\pm 1$ such that the signed curvature $\kappa$ of $\eta$ satisfies $\sigma \kappa \geq 0$.  If $\eta|_{ (0,\tau)}$ is injective and $\eta|_{ (0,\tau)}$ does not cross the line $\eta(0)+\R\,\dot{\eta}(0)$, then the point $\eta(\tau)$ does not belong to the open ray $\eta(0)+(0,+\infty)\,\dot{\eta}(0)$.
\end{lemma}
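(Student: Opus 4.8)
The plan is to argue by contradiction, to close $\eta$ up into a simple closed curve by joining $\eta(\tau)$ back to $\eta(0)$ along the ray, and then to play the orientation forced by the sign of the curvature (via Lemma~\ref{LEMGaussBonnet1}) against the orientation forced by the fact that this closed curve lies on one side of the line.

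\emph{Reduction and set-up.} Up to replacing $\eta$ by its reflection across the line (which reverses the sign of $\kappa$ and fixes the line and the ray), I would assume $\sigma=1$, so $\kappa\ge 0$. After an isometry I normalise $\eta(0)=(0,0)$ and $\dot\eta(0)=(1,0)$, so the line is the $x_1$-axis and the open ray is $\{(x_1,0):x_1>0\}$. Writing $\dot\eta=(\cos\alpha,\sin\alpha)$ with $\alpha(0)=0$ as in \eqref{GaussBonnet1}, the hypothesis $\kappa\ge 0$ gives $\dot\alpha=\kappa\ge 0$, so $\alpha$ is nondecreasing; hence $\alpha\ge 0$ near $0$ and the second coordinate $\int_0^t\sin\alpha$ is nonnegative for small $t$, so $\eta$ starts into $\{x_2\ge 0\}$. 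Reading ``$\eta|_{(0,\tau)}$ does not cross the line'' as the statement that $\eta((0,\tau))$ lies in one of the two open half-planes bounded by the line, that half-plane must be $\{x_2>0\}$. Suppose, for contradiction, that $\eta(\tau)=(a,0)$ with $a>0$.

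\emph{Closing up and the orientation from the curvature sign.} Let $T$ be the segment from $\eta(\tau)=(a,0)$ to $\eta(0)=(0,0)$, traversed in the direction $(-1,0)$, and set $\Theta:=\eta*T$ (after reparametrising $T$ by arc length). Using that $\eta|_{(0,\tau)}$ is injective and valued in $\{x_2>0\}$, that the relative interior of $T$ lies in $\{x_2=0\}$, and that $\eta(0)\ne\eta(\tau)$, the curve $\Theta$ is a \emph{simple} closed piecewise-smooth curve, parametrised by arc length, with exactly two corners: a cusp at the origin (incoming direction $(-1,0)$, outgoing direction $(1,0)$) and an ordinary corner at $(a,0)$. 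I would then apply Lemma~\ref{LEMGaussBonnet1}(ii) to $\Theta$ with $\sigma=1$, taking these two corners as the exceptional indices $i_1,i_2$: its hypotheses hold trivially, since $\int_\Theta\kappa=\int_0^\tau\kappa\ge 0$ (as $T$ is straight) and the condition on the non-exceptional corners is vacuous. The conclusion is $\mathcal D(\Theta)=\mathcal E_1(\Theta)$, i.e.\ $\Theta$ is positively oriented.

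\emph{The contradiction.} On the other hand I would read off the orientation of $\Theta$ from its position. Since $\mbox{spt}(\Theta)\subset\{x_2\ge 0\}$, every point with $x_2<0$ is joined to infinity by a downward vertical ray missing $\mbox{spt}(\Theta)$, hence has winding number $0$; therefore $\mathcal D(\Theta)\subset\{x_2>0\}$. As $\Theta$ is simple, the relative interior of $T$ belongs to $\partial\mathcal D(\Theta)$ and locally separates $\mathcal D(\Theta)$ (above) from the exterior (below); since $\Theta$ runs along $T$ in the direction $(-1,0)$, the domain lies to the \emph{right} of the direction of travel, so $\Theta$ is negatively oriented, i.e.\ $\mathcal D(\Theta)=\mathcal E_{-1}(\Theta)$ by \eqref{EQoriented}. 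Because $\mathcal D(\Theta)\ne\emptyset$ (Jordan), this is incompatible with $\mathcal D(\Theta)=\mathcal E_1(\Theta)$, a contradiction. Equivalently, negative orientation and the Gauss--Bonnet formula \eqref{GaussBonnet} would force $\int_0^\tau\kappa+\delta_1+\delta_2=-2\pi$, whereas $\int_0^\tau\kappa\ge 0$ and $\delta_1,\delta_2\ge-\pi$ give $\int_0^\tau\kappa+\delta_1+\delta_2\ge-2\pi$ with equality only if $\kappa\equiv 0$, which makes $\eta$ the straight segment along the axis and contradicts $\eta((0,\tau))\subset\{x_2>0\}$.

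\emph{Main obstacle.} The delicate point is this last step: rigorously extracting the orientation of $\Theta$ from the geometric fact $\mathcal D(\Theta)\subset\{x_2>0\}$, and, upstream, guaranteeing that $\Theta$ is genuinely simple. The latter is exactly where the ``does not cross the line'' hypothesis is used, since it forces $\eta((0,\tau))$ strictly off the axis and hence disjoint from $T$. Notably, the curvature-sign input enters only through the harmless inequality $\int_0^\tau\kappa\ge 0$, and it is precisely this that makes the two orientation computations clash.
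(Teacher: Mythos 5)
Your proof is correct and is essentially the paper's own argument: assume $\eta(\tau)$ lies on the open ray, close the curve up with the chord along the axis, and derive a contradiction between the orientation forced by $\sigma\kappa\geq 0$ through Gauss--Bonnet and the orientation forced by the curve lying in one half-plane. The paper works with the reversed concatenation (segment first, then $\eta$ backward), asserts its positive orientation outright, and reads the contradiction $2\pi=-\int_0^\tau\kappa\,dt+\delta_0+\pi$ directly off \eqref{GaussBonnet}; your routing of that same computation through Lemma \ref{LEMGaussBonnet1}(ii) and your more explicit justification of the orientation claim (which the paper leaves unproved) are only cosmetic differences.
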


\begin{proof}
We may assume without loss of generality that $\eta(0)=(0,0)$, $\dot{\eta}(0)=(1,0)$, and $\eta_2 \geq 0$ on $[0,\tau]$. Note that under these assumptions, we necessarily have $\kappa\geq 0$. Moreover, since $\eta|_{ (0,0)}$ does not cross the line $\eta(0)+\R\,\dot{\eta}(0)$, we have $\int_{0}^{\tau} \kappa(t)\, dt >0$. Suppose, for the sake of contradiction that $\eta(\tau) \in (0,+\infty) (1,0)$. Consider the curve $\tilde{\eta}$, defined as the concatenation of the line segment $[\eta(0),\eta(\tau)]$ with the curve $\eta|_{[0,\tau]}$ traversed backward. This curves is positively oriented. Applying  the Gauss--Bonnet formula, we obtain
\[
2\pi = - \int_{0}^{\tau} \kappa(t)\, dt + \delta_0 + \pi,
\]
where $\delta_0\in [-\pi,\pi]$ represents the discontinuity of curvature of $\tilde{\eta}$ at the point $\eta(\tau)$. Since $\int_{0}^{\tau}\kappa (t) \, dt>0$, this leads to a contradiction.
\end{proof}

\subsection{Preliminary observations on the curve $\omega$}\label{PROPPrel}

The curve $\omega : I_{\omega}=[0,L(\omega)] \rightarrow \R^2$ is an analytic curve parametrized by arc length, joining $A_0=\bar{\omega}(0)$ to $A_{\epsilon}=\bar{\omega}(\epsilon)$. Its length $L(\omega)$ satisfies
\begin{eqnarray}\label{11dec0}
L(\omega) \leq L(\bar{\omega}) = \int_0^{\epsilon} \left|\dot{\bar{\omega}}_{\epsilon}(t)\right| \, dt = \int_0^{\epsilon} \left( 1+\bar{m}^2 t^{m-2}\right)^{\frac{1}{2}} \, dt = \epsilon + \frac{\bar{m}^2 \epsilon^{m-1}}{2(m-1)} + o(\epsilon^{m-1}),
\end{eqnarray}
for small $\epsilon>0$. The following lemma provides several results that will be used repeatedly throughout the proof of Proposition \ref{PROPPrel}. Notably, the property stated in (v) is reminiscent of an estimates used by Liu and Sussmann in \cite[Lemma 1 p. 14]{ls95} to establish the minimality of the (smooth) singular horizontal path they considered.

\begin{lemma}\label{PROPPrelLEM1}
There are constants $\epsilon_0, C>0$ such that for every $\epsilon \in (0,\epsilon_0)$, the following hold.
\begin{itemize}
\item[(i)] For every $t\in (0,L(\omega)]$, $\omega_1(t)>0$.
\item[(ii)] $\theta(0)\in (-\pi/2,\pi/2)$.
\item[(iii)] $\max_{t\in [0,L(\omega)]} P(\omega(t)) >0$.
\item[(iv)] For every $t\in I_{\omega}$, $\omega_1(t) \leq 2 \epsilon^{\bar{m}}$ and $|\omega_2(t)|\leq 2\epsilon$.
\item[(v)] $\beta := \max_{t\in I_{\omega}} |P(\omega(t))| \leq C \, \epsilon^{3\bar{m}-1} = o(\epsilon^m)$ as $\epsilon \rightarrow 0$.
\item[(vi)] The function $t\in I_{\omega} \mapsto \omega(t)$ is not injective.
\item[(vii)] $\lambda \neq 0$.
\item[(viii)] Any loop $\ell$ of $\omega$ satisfies $L(\ell) \leq C\beta^{1-\frac{1}{m}} = o(\epsilon^{\bar{m}})$ as $\epsilon \rightarrow 0$.
\end{itemize}
\end{lemma}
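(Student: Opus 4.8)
The plan is to extract essentially everything from one \emph{near-verticality} estimate. Since $\omega$ is parametrized by arc length with $\dot\omega_2=\sin\theta$ and joins $A_0$ to $A_\epsilon$, I have $\int_0^{L(\omega)}\sin\theta\,dt=\omega_2(L(\omega))-\omega_2(0)=\epsilon$, hence $\int_0^{L(\omega)}(1-\sin\theta)\,dt=L(\omega)-\epsilon$. With the standing hypothesis $L(\omega)\le L(\bar\omega)$ and the expansion \eqref{11dec0}, this gives the master bound $\int_0^{L(\omega)}(1-\sin\theta)\,dt\le \tfrac{\bar m^2}{2(m-1)}\epsilon^{m-1}(1+o(1))=O(\epsilon^{m-1})$, for a nonnegative integrand. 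I would read off (iv) and (vii) at once: $|\omega_2(t)|\le L(\omega)\le 2\epsilon$, while $|\cos\theta|\le\sqrt{2(1-\sin\theta)}$ and Cauchy--Schwarz give $|\omega_1(t)|\le\int_0^{L(\omega)}|\cos\theta|\,dt=O(\epsilon^{\bar m})$ (the precise constant $2$ requiring a little extra care); and if $\lambda=0$ then \eqref{SYSomega} makes $\omega$ the straight segment $[A_0,A_\epsilon]$, on which $P=x_2^2(\epsilon^{m-2}-x_2^{m-2})\not\equiv 0$, so $\int_\omega P^2\,dx_2>0$, contradicting \eqref{OptimPlane}.

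\textbf{Positivity.} For (i)--(ii) I would exploit that $P$ is even in $x_1$, so the reflection $\omega\mapsto\omega^*:=(|\omega_1|,\omega_2)$ preserves arc length and the constraint $\int P^2\,dx_2$, hence lifts to another length-minimizer with the same endpoints. A \emph{transversal} interior zero of $\omega_1$ would give $\omega^*$ a corner, contradicting the Hakavuori--Le Donne no-corner theorem; a \emph{tangential} zero, where $\omega_1=\cos\theta=0$ simultaneously, would make $\omega_1$ solve the linear equation $\ddot\omega_1=-4\lambda(\sin\theta)\,\omega_1\,P(\omega)$ with vanishing Cauchy data, forcing $\omega_1\equiv 0$ and contradicting $\omega_1(L(\omega))=\epsilon^{\bar m}>0$. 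Running the same dichotomy at $t=0$ excludes $\cos\theta(0)\le0$, which is exactly (ii), and yields $\omega_1>0$ on $(0,L(\omega)]$.

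\textbf{The crux (v).} This is the step I expect to be hardest, since it must gain a full factor $\epsilon^{\bar m-1}$ over the trivial bound $\beta=O(\epsilon^m)$ purely from the vanishing constraint. Let $\beta=\max_{I_\omega}|P(\omega)|$, attained at $t_*$. By (iv), $|\tfrac{d}{dt}P(\omega)|=|2\omega_1\cos\theta-m\omega_2^{m-1}\sin\theta|=O(\epsilon^{\bar m})$, so $P(\omega)^2\ge\beta^2/4$ on an interval $I_0\ni t_*$ with $|I_0|\ge c\,\beta\epsilon^{-\bar m}$. On the other hand \eqref{OptimPlane} reads $\int_0^{L(\omega)}P^2\sin\theta\,dt=0$, and the master bound forces $|\{\sin\theta<0\}|=O(\epsilon^{m-1})$, so $\int_{\{\sin\theta>0\}}P^2\sin\theta=\int_{\{\sin\theta<0\}}P^2|\sin\theta|\le\beta^2 O(\epsilon^{m-1})$, while $\int_{\{\sin\theta>0\}}P^2\sin\theta\ge\int_{I_0}P^2\sin\theta\ge\tfrac{\beta^2}{4}|I_0|-\beta^2O(\epsilon^{m-1})$. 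Comparing gives $|I_0|=O(\epsilon^{m-1})$, whence $\beta\epsilon^{-\bar m}=O(\epsilon^{m-1})$, i.e. $\beta=O(\epsilon^{\bar m+m-1})=O(\epsilon^{3\bar m-1})$. Then (iii) follows because, were $P(\omega)\le0$ throughout, $\omega_1>0$ would force $\omega_2>0$ and $\omega_1\le\omega_2^{\bar m}$ on $(0,L(\omega)]$, the curvature $\dot\theta=4\lambda\omega_1 P$ would keep a constant sign, and the constraint together with $\omega\ne\bar\omega$ would force a genuine downward arc that a Gauss--Bonnet/convexity argument (Lemma \ref{LEMGaussBonnet}) shows incompatible with reaching $A_\epsilon$.

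\textbf{Non-injectivity and loops.} For (vi) I would argue that an injective $\omega$ would, through the weighted-area identity $A(\omega*\check\omega)=\int_\omega P^2\,dx_2=0$ of Lemma \ref{LEMStokesIso}(i) and the fact that $Q=4x_1P$ keeps the sign of $P$ on $\{x_1>0\}$, enclose a region of nonzero $Q$-weighted area between $\omega$ and $\bar\omega$, a contradiction; the precise loop count is deferred to Proposition \ref{PROPMain}. Finally (viii) is immediate from (i), (v) and Proposition \ref{PROPsublevelsets}(i): deleting a loop $\ell$ leaves a Lipschitz competitor $\nu$ from $A_0$ to $A_\epsilon$ with $\nu_1>0$ and $P(\nu)\le\beta\le K\epsilon^{3\bar m-1}$, so $L(\nu)\ge L(\bar\omega)-C\beta^{1-1/m}$ and hence $L(\ell)=L(\omega)-L(\nu)\le L(\bar\omega)-L(\nu)\le C\beta^{1-1/m}=o(\epsilon^{\bar m})$.
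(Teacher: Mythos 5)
Your proposal is correct in substance on the core items, and on those it follows the paper's own route. For (v), your argument (the constraint $\int_\omega P^2\,dx_2=0$, the ``master bound'' $\int_0^{L(\omega)}(1-\sin\theta)\,dt=L(\omega)-\epsilon=O(\epsilon^{m-1})$ from \eqref{11dec0}, and the Lipschitz bound $|\tfrac{d}{dt}P(\omega(t))|=O(\epsilon^{\bar{m}})$ coming from (iv)) is exactly the paper's computation, which phrases it as $\int_0^{L(\omega)}P(\omega)^2\,dt=\int_0^{L(\omega)}(1-\dot{\omega}_2)P(\omega)^2\,dt\leq\beta^2(L(\omega)-\epsilon)$ against the lower bound $\beta^3/(20\epsilon^{\bar{m}})$. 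Your (vi) is the paper's Stokes/weighted-area argument in compressed form (the paper's additional content is the winding-number bookkeeping showing $\mathcal{E}_{\pm 1}(\omega*\check{\omega})\subset\{\pm Q>0\}$, which is what makes ``nonzero weighted area between $\omega$ and $\bar{\omega}$'' precise), and your (viii) coincides with the paper's proof (delete the loop, apply Proposition \ref{PROPsublevelsets}(i) with $\rho=\beta$). Two points are genuinely different, and both work: for (i)--(ii) you exclude transversal zeros of $\omega_1$ via the Hakavuori--Le Donne no-corner theorem, whereas the paper needs only the cheaper fact, already available from Step 1, that a minimizer distinct from $\bar{\gamma}_{\epsilon}$ projects to a solution of \eqref{SYSomega} and is therefore analytic, so the corner of the reflected curve is already a contradiction; your treatment of tangential zeros by uniqueness for the linear equation $\ddot{\omega}_1=-4\lambda\sin\theta\,P(\omega)\,\omega_1$ with vanishing Cauchy data is the same in substance as the paper's ODE-uniqueness argument. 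For (vii), your direct verification that the segment $[A_0,A_\epsilon]$ violates the constraint (on it $P=x_2^2(\epsilon^{m-2}-x_2^{m-2})\not\equiv 0$) is cleaner than, and independent of, the paper's deduction from (vi).

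The genuine gap is (iii). What you write is not a proof: ``the constraint together with $\omega\neq\bar{\omega}$ would force a genuine downward arc that a Gauss--Bonnet/convexity argument shows incompatible with reaching $A_\epsilon$'' is an unexecuted program (you would at least need to dispose of the case $P\circ\omega\equiv 0$, make ``downward arc'' quantitative, and actually run the Gauss--Bonnet argument; also note that $\dot{\theta}=4\lambda\omega_1 P$ only has a \emph{weak} constant sign when $P\leq 0$, since $P$ may vanish). Moreover the detour is unnecessary, and this is how the paper proves it: (iii) is an immediate consequence of (ii). Since $\cos\theta(0)>0$, for small $t>0$ one has $\omega_1(t)\geq \tfrac{t}{2}\cos\theta(0)$ while $|\omega_2(t)|\leq t$, hence $P(\omega(t))\geq \tfrac{t^2}{4}\cos^2\theta(0)-t^m>0$ because $m>2$. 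You should replace your sketch by this one-line observation.

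A smaller, quantitative point concerns (iv): your Cauchy--Schwarz bound gives $|\omega_1(t)|\leq\sqrt{2L(\omega)(L(\omega)-\epsilon)}\sim \tfrac{m}{2\sqrt{m-1}}\,\epsilon^{\bar{m}}$, and $\tfrac{m}{2\sqrt{m-1}}>2$ as soon as $m\geq 15$, so the ``little extra care'' you defer is genuinely needed to obtain the constant $2$ appearing in the statement; the paper instead gets it from a triangle-inequality/reflection estimate (if $\omega_1$ reaches $2\epsilon^{\bar{m}}$, then $L(\omega)$ exceeds the length of the shortest path from $A_0$ to $A_\epsilon$ through the line $\{x_1=2\epsilon^{\bar{m}}\}$, which beats \eqref{11dec0}). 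Since every later use of (iv), including your own proof of (v), only requires \emph{some} fixed constant, this does not affect the architecture of your argument, but as a proof of the lemma as literally stated it is incomplete for large $m$.
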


\begin{proof}
To prove (i), assume that there is an interval $J=[t_0,t_1] \subset I_{\omega}$ such that $
\omega_1(t_0) = \omega_1(t_1) = 0$ and $\omega_1(t) \leq 0$ for all $t \in J$. Then the curve $\hat{\omega}: I_{\omega} \rightarrow \R^2$, defined as
\[
\hat{\omega}(t) := \left\{
\begin{array}{cl}
\omega(t) & \mbox{ if } t \notin J\\
\left( -\omega_1(t),\omega_2(t)\right) & \mbox{ if } t \in J
\end{array}
\right.
\]
has the same length as $\omega$. Furthermore, it satisfies
\[
\int_{\hat{\omega}} P^2\, dx_2 = \int_0^{L(\omega)} P( \hat{\omega}(t))^2 \dot{\hat{\omega}}_2(t) \, dt =  \int_0^{L(\omega)} \left(\hat{\omega}_1(t)^2 - \hat{\omega}_2(t)^m \right)^2 \dot{\hat{\omega}}_2(t) \, dt = \int_{\omega} P^2\, dx_2=0.
\]
As a consequence, if $\omega(t)<0$ for some $t\in I_{\omega}$ then we can define a nonanalytic curve that minimizes the length from $\bar{\omega}(0)$ to $\bar{\omega}(\epsilon)$, which is a contradiction. If $\omega_1(t)=0$ for some $t\in (0,L(\omega)$, then $\cos \theta(t)=0$. In that case, as well as when $t=0$ and $\cos \theta(0)=0$, the uniqueness of the solution to (\ref{SYSomega}) with fixed initial conditions implies that the curve $s \mapsto \omega (t+s)$ must coincide with the straight line $s \mapsto (0,\omega_2(t)+(s-t)L(\omega)\sin \theta(t))$. This leads to a contradiction, thereby completing the proofs of (i) and (ii). By (ii) and (\ref{SYSomega}), we have $P(\omega(t)) >0$ for $t>0$ small, which proves (iii).

To prove (iv), it is sufficient to show that there is no $t\in I_{\omega}$ such that $\omega_2(t)=2\epsilon$ or $\omega_1(t)=2\epsilon^{\bar{m}}$. Suppose there exists $t\in I_{\omega}$ such that $\omega_2(t)=2\epsilon$. Then we have
\[
L(\omega) \geq L([\omega(0),\omega(t)]) + L([\omega(t),\omega(\epsilon)]) = \sqrt{\omega_1(t)^2+4\epsilon^2} + \sqrt{(\epsilon^{\bar{m}}-\omega_1(t))^2+\epsilon^2} \geq 3 \epsilon,
\]
which contradicts (\ref{11dec0}) for sufficiently small $\epsilon>0$. Suppose now that there exists $t\in I_{\omega}$ such that $\omega_1(t)=2\epsilon^{\bar{m}}$, then as above we have
\[
L(\omega) \geq  \sqrt{4\epsilon^m+\omega_2(t)^2} + \sqrt{\epsilon^m+(\epsilon-\omega_2(t))^2} \geq \epsilon \sqrt{1+\epsilon^{m-2}},
\]
where the last inequality follows form the fact that the function $z\in \R\mapsto  \sqrt{4\epsilon^m+z^2} + \sqrt{\epsilon^m+(\epsilon-z)^2}$ has a minimum at $z=2\epsilon/3$. The inequality contradicts (\ref{11dec0}) for sufficiently small $\epsilon>0$.

To prove (v), we start by computing
\[
J:= \int_0^{L(\omega)} P(\omega(t))^2 \, dt.
\]
Since $\omega$ joins $\bar{\omega}(0)$ to $\bar{\omega}(\epsilon)$ and $\int_{\omega}P^2dx_2=0$, we have (noting that $1-\dot{\omega}_2\geq 1-|\dot{\omega}| \geq 0$)
 \begin{eqnarray}\label{11dec1}
 J = \int_0^{L(\omega)} \left(1-\dot{\omega}_2(t)\right)  P(\omega(t))^2 \, dt \leq \beta^2  \int_0^{L(\omega)} \left(1-\dot{\omega}_2(t)\right) \, dt = \beta^2 \left( L(\omega)- \epsilon\right).
 \end{eqnarray}
 Furthermore, the derivative of $t\in I_{\omega} \mapsto P(t) := P(\omega(t))$ satisfies (using (iv))
 \[
 | \dot{P}(t)| = | 2\omega_1(t)\dot{\omega}_1(t)- m\omega_2^{m-1}(t)\dot{\omega}_2(t)| \leq 2|\omega_1(t)| + m| \omega_2(t)|^{m-1} \leq 4\epsilon^{\bar{m}} + m (2\epsilon)^{m-1} \leq 5 \epsilon^{\bar{m}},
 \]
 provided $\epsilon>0$ is sufficiently small. We infer that (where $\mathcal{L}^1$ denotes the Lebesgue measure)
 \[
 J \geq \int_{\{|P|\geq \beta/2\}} P(t)^2 \, dt \geq \frac{\beta^2}{4}  \mathcal{L}^1(\{|P|\geq \beta/2\}) \geq \frac{\beta^2}{4} \frac{\beta}{5\epsilon^{\bar{m}}} = \frac{\beta^3}{20\epsilon^{\bar{m}}}.
 \]
The inequality in (v) follows directly from (\ref{11dec1}) and the inequality $L(\omega)-\epsilon \leq m^2\epsilon^{m-1}$, as established in (\ref{11dec0}) for sufficiently small $\epsilon>0$. We deduce that $\beta=o(\epsilon^m)$ since $m\geq 5$.

 To prove (vi), suppose, for contradiction, that $\omega$ is injective. Then, by Lemma \ref{LEMStokesIso} (i), we have $A(\omega*\check{\omega})=0$. Since both $\omega$ and $\bar{\omega}$ are injective, the winding number of any point with respect to the closed curve $\eta := \omega*\check{\omega}$ is $\pm 1$ or $0$. Consequently,  the bounded open set $\mathcal{E}(\omega*\check{\omega})$ is the union of the two disjoint sets $\mathcal{E}_{-1}(\eta)$ and $\mathcal{E}_{1}(\eta)$. From assertion (i), the function $Q(\omega(t))$ has the same sign as $P(\omega(t))$. Furthermore, since $P(\bar{\omega})\equiv 0$, the connected components of $\mathcal{E}_{-1}(\eta)$ are contained in $\{Q<0\}$, while the connected components of $\mathcal{E}_{1}(\eta)$ are contained in $\{Q>0\}$. As $\omega$ and $\bar{\omega}$ are not identical, one of the sets $\mathcal{E}_{-1}(\eta)$ or $\mathcal{E}_{1}(\eta)$ must be nonempty. This implies that $A(\eta)>0$, contradicting the earlier result that $A(\omega*\check{\omega})=0$. The contradiction proves that $\omega$ cannot be injective.

To prove (vii), we observe that if $\lambda =0$, then by (\ref{SYSomega}), $\omega$ is a straight line, and thus injective. however, this contradicts assertion (vi).

To prove (viii), we observe that the curve $\nu:= \omega|_{ [0,s_{\ell}^-]}*\omega|_{ [s_{\ell}^+,L(\omega)]}$ satisfies (\ref{CalcVar}). Applying \eqref{19dec1} in Proposition \ref{PROPsublevelsets} with $\rho=\beta$, we obtain
\[
L\left(\bar{\omega}\right) = L(\ell)+ L(\nu) \geq L(\ell) + L\left(\nu_{\epsilon}^{\rho}\right) \geq L(\ell) + L\left(\bar{\omega}\right) - C \rho^{1-\frac{1}{m}}.
\]
Rearranging terms gives the desired result. The relation $L(\ell) = o(\epsilon^{\bar{m}})$ follows from $m\geq 5$.
\end{proof}

The next result follows from the upper bound for $\beta$ given in Lemma \ref{PROPPrelLEM1}, combined with inequality (\ref{11dec0}).

\begin{lemma}\label{PROPPrelLEM2}
For every $K > 0$, there exist constants $\epsilon_0(K), C(K) >0$ such that for any $\epsilon \in (0,\epsilon_0(K))$ and $t\in I_{\omega}$, the following holds:
\[
\omega_2(t) \geq K\epsilon \quad \Longrightarrow \quad
\left\{ \begin{array}{rcl}
\omega_1(s) & \geq & C(K)\, \epsilon^{\bar{m}}\\
\omega_2(s) & \geq & C(K)\, \epsilon
\end{array}
\right.
\quad \forall s \in [t,L(\omega)].
\]
\end{lemma}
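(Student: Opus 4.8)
The plan is to prove the two lower bounds separately, first the one on $\omega_2$ and then, using it together with the smallness of $\beta$, the one on $\omega_1$. Note first that since $\omega(0)=A_0=(0,0)$ we have $\omega_2(0)=0$, so the hypothesis $\omega_2(t)\geq K\epsilon$ forces $t>0$; hence every $s\in[t,L(\omega)]$ lies in $(0,L(\omega)]$, where assertion (i) of Lemma~\ref{PROPPrelLEM1} guarantees $\omega_1(s)>0$. We may also assume $K\leq 2$, since for $K>2$ assertion (iv) of Lemma~\ref{PROPPrelLEM1} makes the hypothesis empty for small $\epsilon$ and the statement holds vacuously.

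For the bound on $\omega_2$, the key observation is that the total downward displacement of the second coordinate is negligible compared with $\epsilon$. Writing $\dot{\omega}_2=\sin\theta$ by (\ref{SYSomega}) and setting $\dot{\omega}_2^-:=\max(-\dot{\omega}_2,0)$, one checks the pointwise inequality $\dot{\omega}_2^-\leq 1-\dot{\omega}_2$ (trivial where $\dot{\omega}_2\geq 0$, since $\dot{\omega}_2\leq 1$, and where $\dot{\omega}_2<0$ it reads $-\dot{\omega}_2\leq 1-\dot{\omega}_2$). Integrating, and using that $\omega$ is parametrized by arc length with $\omega_2(L(\omega))-\omega_2(0)=\epsilon$ together with the length estimate (\ref{11dec0}) in the form $L(\omega)-\epsilon\leq m^2\epsilon^{m-1}$, gives
\[
\int_0^{L(\omega)} \dot{\omega}_2^-(\tau)\,d\tau \leq \int_0^{L(\omega)} \bigl(1-\dot{\omega}_2(\tau)\bigr)\,d\tau = L(\omega)-\epsilon \leq m^2\epsilon^{m-1}.
\]
Consequently, for every $s\in[t,L(\omega)]$,
\[
\omega_2(s) = \omega_2(t)+\int_t^s \dot{\omega}_2(\tau)\,d\tau \geq K\epsilon - \int_0^{L(\omega)} \dot{\omega}_2^-(\tau)\,d\tau \geq K\epsilon - m^2\epsilon^{m-1} \geq \tfrac{K}{2}\epsilon,
\]
the last inequality holding once $\epsilon<\epsilon_0(K)$ is small enough that $m^2\epsilon^{m-2}\leq K/2$, which is possible because $m\geq 5$.

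For the bound on $\omega_1$, I would exploit that $P=\omega_1^2-\omega_2^m$ is uniformly small. Fix $s\in[t,L(\omega)]$. By assertion (v) of Lemma~\ref{PROPPrelLEM1} we have $|P(\omega(s))|\leq\beta\leq C\epsilon^{3\bar{m}-1}=o(\epsilon^m)$, so that
\[
\omega_1(s)^2 = P(\omega(s))+\omega_2(s)^m \geq \omega_2(s)^m-\beta \geq \bigl(\tfrac{K}{2}\bigr)^m\epsilon^m - o(\epsilon^m) \geq \tfrac12\bigl(\tfrac{K}{2}\bigr)^m\epsilon^m
\]
for $\epsilon<\epsilon_0(K)$ small enough. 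Since $\omega_1(s)>0$, taking the positive square root yields $\omega_1(s)\geq 2^{-1/2}(K/2)^{\bar{m}}\epsilon^{\bar{m}}$. Taking $C(K):=\min\{K/2,\,2^{-1/2}(K/2)^{\bar{m}}\}$ then delivers both inequalities simultaneously and completes the proof.

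The argument is short once the right estimates are in hand; the only genuinely substantive point is the control of the downward motion of $\omega_2$, namely the pointwise inequality $\dot{\omega}_2^-\leq 1-\dot{\omega}_2$ combined with the near-optimality of the length $L(\omega)\leq\epsilon+O(\epsilon^{m-1})$ coming from (\ref{11dec0}). Everything else is an algebraic consequence of the smallness of $\beta$ from Lemma~\ref{PROPPrelLEM1}(v) and the positivity of $\omega_1$ from Lemma~\ref{PROPPrelLEM1}(i).
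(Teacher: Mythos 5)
Your proposal is correct and takes essentially the same approach as the paper: both arguments first use the near-minimality estimate (\ref{11dec0}) to show that $\omega_2$ cannot drop significantly below $K\epsilon$ after time $t$, and then obtain the bound on $\omega_1$ exactly as the paper does, from $\omega_1(s)^2 = \omega_2(s)^m + P(\omega(s))$ together with $\beta \leq C\epsilon^{3\bar{m}-1} = o(\epsilon^m)$ (Lemma \ref{PROPPrelLEM1} (v)) and the positivity of $\omega_1$ (Lemma \ref{PROPPrelLEM1} (i)). The only cosmetic difference is in the first step, where the paper argues by contradiction via the length decomposition $L(\omega) \geq L(\omega|_{[0,t]}) + L(\omega|_{[s,L(\omega)]}) \geq \omega_2(t) + \epsilon - \omega_2(s)$, while you integrate the pointwise inequality $\dot{\omega}_2^- \leq 1 - \dot{\omega}_2$; these are the same estimate in different clothing.
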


\begin{proof}
Let $K>0$ and $t\in I_{\omega}$ be such that $\omega_2(t)\geq K\epsilon$. Define $C(K):=K/2$, and assume  there exists $s\in [t,L(\omega)]$ such that $\omega_2(s)<C(K)\epsilon$. Since $\omega_2(0)=0$ and $\omega_2(L(\omega))=\epsilon$, we deduce that
\[
L(\omega) \geq L(\omega|_{[0,t]}) +  L(\omega|_{[s,L(\omega)]}) \geq \omega_2(t) + \epsilon - \omega_2(s) \geq \epsilon + C(K) \epsilon.
\]
This inequality contradicts (\ref{11dec0}) for sufficiently small $\epsilon>0$. Hence, for sufficiently small $\epsilon >0$, we must have $\omega_2(s)\geq C(K)\epsilon$ for all $s\in [t,L(\omega)]$. As a result, using the bound for $\beta$ from Lemma \ref{PROPPrelLEM1} (v), for all $s\in [t,L(\omega)]$, we obtain:
\[
\omega_1(s)^2 = \omega_2(s)^m + P(\omega(s)) \geq \omega_2(s)^m - |P(\omega(s))| \geq C(K)^m\epsilon^m- \beta \geq C(K)^m\epsilon^m-  C \, \epsilon^{3\bar{m}-1}.
\]
We conclude by noting that $3\bar{m}-1>m$.
\end{proof}

\subsection{Some obstructions to the minimality of $\omega$}\label{SECObs}

In the following result, we present a series of obstructions that arise from the minimality of $\omega$. Each of these scenarios is ruled out because, if any were to occur, we could use Stokes' theorem and/or the isoperimetric inequality, as stated in Lemma \ref{LEMStokesIso}, to construct a new curve satisfying (\ref{OptimPlane}) that is either shorter than $\omega$ or has the same length but fails to be analytic or identical to $\bar{\omega}$.

\begin{lemma}\label{LEMObs}
By taking $\epsilon_0>0$ from Lemma \ref{PROPPrelLEM1} smaller if necessary, none of the following situations can occur for any $\epsilon \in (0,\epsilon_0)$:
\begin{itemize}
\item[(i)] There exist a loop $\ell$ of $\omega$ and a Lipschitz closed curve $\eta:[0,\tau] \rightarrow \R^2$ that intersects the curve $\omega|_{ [0,s_{\ell}^-]} * \omega|_{ [s_{\ell}^{+},L(\omega)]}$ such that $L(\eta) \leq L(\ell)$ and $|A(\ell)| \leq |A(\eta)|$.
\item[(ii)] There exist $t_1<t_2 \in I_{\omega}$ and a Lipschitz closed curve $\eta:[0,\tau] \rightarrow \R^2$ that intersects the curve $\omega|_{ [0,t_1]} * [\omega(t_1),\omega(t_2)] * \omega|_{ [t_2,L(\omega)]})$ such that $L(\eta) \leq L(\omega|_{ [t_1,t_2]})- L([\omega(t_1),\omega(t_2)])$ and $|A(\omega|_{ [t_1,t_2]} * [\omega(t_2),\omega(t_1)])| < |A(\eta)|$.
\item[(iii)] There exist a simple loop $\ell$ of $\omega$, $t\in I_{\omega}\setminus \mbox{\rm int}(J_{\ell})$, $\sigma = \pm 1$, and $s>0$ such that $\sigma (P\circ \ell) \geq 0$ and $\omega(t)+\sigma (0,s) \in \mbox{\rm{spt}}(\ell)$.
\item[(iv)] There exist $t_1< t_2$ in $I_{\omega}$ such that $P(\omega(t_1))=P(\omega(t_2))=0$, $(P\circ \omega)|_{ (t_1,t_2)}<0$, and $\omega$ is injective on $[t_1,t_2)$.
\item[(v)] There exist a loop $\ell$ of $\omega$ and $t^*\in I_{\omega}\setminus \mbox{\rm int}(J_{\ell})$ such that $\max_{t\in J_{\ell}}|Q(\omega(t))| \leq Q(\omega(t^*))$.
\item[(vi)] There exist two loops $\ell_1$ and $\ell_2$ of $\omega$ such that $\mbox{\rm int}(J_{\ell_1}) \cap \mbox{\rm int}(J_{\ell_2})=\emptyset$ and $Q\circ \ell_1, Q\circ \ell_2 \geq 0$.
\end{itemize}
Moreover, for every $K>0$, there exists $\epsilon_0(K)>0$ such that  for every $\epsilon \in (0,\epsilon_0(K))$, the following situations cannot occur:
\begin{itemize}
\item[(vii)] There exist a loop $\ell$ of $\omega$ and $t^*\in I_{\omega}\setminus \mbox{\rm int}(J_{\ell})$ such that $\max_{t\in J_{\ell}} |Q(\omega(t))| \leq -Q(t^*)$ and $\omega_2(t^*)\geq K \epsilon$.
\item[(viii)] There exist two loops $\ell_1$ and $\ell_2$ of $\omega$ such that $\mbox{\rm int}(J_{\ell_1}) \cap \mbox{\rm int}(J_{\ell_2})=\emptyset$  and $\omega_2(s_{\ell_1}^-), \omega_2(s_{\ell_2}^-)  \geq K \epsilon$.
\end{itemize}
\end{lemma}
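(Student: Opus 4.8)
The plan is to rule out each configuration by one common principle: assuming that one of (i)--(viii) holds, I build a Lipschitz curve $\zeta$ satisfying the admissibility conditions (\ref{OptimPlane}) with $L(\zeta)\le L(\omega)$, which is either strictly shorter than $\omega$ --- contradicting its minimality --- or of the same length but carrying a corner, hence neither analytic nor equal to $\bar\omega$, contradicting the fact (used in Step~1) that a minimizer must be a reparametrization of $\bar\gamma$ or the analytic projection of a normal extremal. The entire bookkeeping of the constraint $\int_\zeta P^2\,dx_2=0$ is done through the weighted area: by Lemma \ref{LEMStokesIso}(i) any local modification of $\omega$ changes $\int P^2\,dx_2$ by exactly the weighted area of the closed curve formed from the removed and inserted pieces, and this is the quantity we must compensate.

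First I would prove (i) and (ii), which act as master surgery lemmas. For (i), excising the loop gives $\nu:=\omega|_{[0,s_\ell^-]}*\omega|_{[s_\ell^+,L(\omega)]}$ with $L(\nu)=L(\omega)-L(\ell)$, and since $\int_\omega P^2\,dx_2=0$ we get $\int_\nu P^2\,dx_2=-A(\ell)$. At a point $p$ where the given closed curve $\eta$ meets $\nu$, I reinsert a copy $\eta_s$ of $\eta$ rescaled about $p$, so that $L(\eta_s)=sL(\eta)$ while $s\mapsto A(\eta_s)$ is continuous with $A(\eta_0)=0$ and $A(\eta_1)=A(\eta)$. Orienting $\eta$ so that $A(\eta)$ and $A(\ell)$ share their sign, the hypothesis $|A(\ell)|\le|A(\eta)|$ and the intermediate value theorem furnish $s^*\in(0,1]$ with $A(\eta_{s^*})=A(\ell)$; then $\zeta:=\nu|_{[0,\hat t]}*\eta_{s^*}*\nu|_{[\hat t,L(\nu)]}$ satisfies (\ref{OptimPlane}) and has length $L(\nu)+s^*L(\eta)\le L(\omega)$, with strict inequality unless $s^*=1$ and $L(\eta)=L(\ell)$, in which case $\zeta$ carries a corner at $p$. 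The proof of (ii) is identical once the excised loop is replaced by the shortcut $[\omega(t_1),\omega(t_2)]$ in place of $\omega|_{[t_1,t_2]}$, using $L(\omega|_{[t_1,t_2]})-L([\omega(t_1),\omega(t_2)])$ as the recovered length budget and $A(\omega|_{[t_1,t_2]}*[\omega(t_2),\omega(t_1)])$ as the area to compensate.

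The remaining items are geometric configurations that I would reduce to (i) or (ii) by exhibiting a suitable $\eta$. The structural fact driving this is that, by (\ref{SYSomega}) and Lemma \ref{PROPPrelLEM1}(i), $\dot\theta=\lambda Q(\omega)=4\lambda\,\omega_1 P(\omega)$ with $\omega_1>0$, so the sign of $P$ along a loop fixes the sign of its curvature; a loop on which $P$ has constant sign is therefore convex, and Lemmas \ref{LEM2025} and \ref{LEMGaussBonnet1} locate its interior $\mathcal{E}(\ell)$ and the sign of $Q$ there. For (iii), this convexity together with the vertical incidence $\omega(t)+\sigma(0,s)\in\mathrm{spt}(\ell)$ and Lemma \ref{LEMGaussBonnet} forces a crossing producing the closed competitor needed for (i)/(ii). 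For (iv), the arc lying in $\{P<0\}=\{Q<0\}$ between the two zeros of $P$ is paired with the chord, and, since the enclosed region sits in $\{Q<0\}$, Stokes gives a weighted area of definite sign that exhibits the hypotheses of (ii). For (v)--(viii), the isoperimetric inequality of Lemma \ref{LEMStokesIso}(ii) bounds $|A(\ell)|$ by $\sup_{\mathcal{E}(\ell)}|Q|\,L(\ell)^2/(4\pi)$; when $Q$ is at least as large at $\omega(t^*)$ as on the loop (cases (v), (vii)), or when a second disjoint loop carries comparable $Q$-weight (cases (vi), (viii)), one produces near that point, or by transferring a loop, a closed curve $\eta$ of no greater length but at least as much weighted area, so that (i) applies. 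In (vii) and (viii) the hypothesis $\omega_2\ge K\epsilon$ is precisely what lets Lemma \ref{PROPPrelLEM2} supply the pointwise lower bounds on $\omega_1$ and $\omega_2$ that make these area/length comparisons quantitative.

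The main obstacle is the area-matching surgery inside (i) and (ii): one must check that the rescaled insertion $\eta_s$ keeps meeting $\nu$ (resp.\ the shortcut curve) so the concatenation is well defined, that $s\mapsto A(\eta_s)$ is genuinely continuous and sweeps all of $[0,A(\eta)]$ after fixing orientation, and that the degenerate equal-length case truly forces a corner and hence non-analyticity. In the geometric cases (iii)--(viii), the corresponding difficulty is the correct identification of the enclosed region and of the sign of $Q$ on it, so that the stated hypotheses translate into the sharp inequalities $L(\eta)\le L(\ell)$ and $|A(\eta)|\ge|A(\ell)|$ (or their (ii)-analogues) needed to trigger the master lemmas.
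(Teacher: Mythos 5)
Your handling of the master obstructions (i) and (ii) coincides with the paper's own argument: the paper proves (ii) first by rescaling the competitor about its intersection point with the curve ($\eta^r(t):=\eta(0)+r(\eta(t)-\eta(0))$ with $\eta(0)=\omega(t^*)$ on the curve, so the concatenation is automatically well defined), matching the weighted area by the intermediate value theorem via Lemma \ref{LEMStokesIso}(i), and concluding that the surgered curve is an admissible minimizer that is either strictly shorter or non-analytic; (i) is then the special case $t_1=s_{\ell}^-$, $t_2=s_{\ell}^+$. That part of your plan is sound.

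The genuine gap is (iv). You pair the arc $\omega|_{[t_1,t_2]}$ with the chord and assert that, since the enclosed region lies in $\{Q<0\}$, Stokes ``exhibits the hypotheses of (ii).'' But the hypotheses of (ii) require you to \emph{construct} a second closed curve $\eta$, meeting the modified curve, with $|A(\eta)|$ strictly larger than $|A(\alpha)|$ and with $L(\eta)$ at most the length defect $L(\omega|_{[t_1,t_2]})-L([\omega(t_1),\omega(t_2)])$; knowing the sign of $A(\alpha)$ produces no such $\eta$. Moreover no disc-type competitor can exist in general: for a long, shallow excursion into $\{P<0\}$ (Euclidean depth $d$ over a run of length $L$), the area to beat is of order (Lebesgue area)$\times$(typical $|Q|$) $\sim Ld\cdot \epsilon^{m}d=\epsilon^{m}Ld^2$, while the length defect is only of order $d^2/L$; any closed curve built within that budget must stay near the existing curve, where $|Q|$ is small, and so carries weighted area smaller by a factor on the order of $(d/L)^2$. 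This is precisely why the paper does not reduce (iv) to (ii): it replaces the arc by its \emph{reflection} $R_r\circ\omega|_{[t_1,t_2]}$ across $S=\{P=0,\ x_1\geq 0\}$, shows $R_r$ is $1$-Lipschitz on the correct side, and—this is the real work—proves the pointwise asymmetry $\Delta(s,t)>0$ (a binomial expansion of $Q$ in normal coordinates along $S$) showing that the reflected region in $\{Q>0\}$ carries strictly more weighted area than the original one in $\{Q<0\}$; an IVT in the reflection parameter $r$ then matches areas and yields a non-analytic minimizer. Without this reflection idea (or a substitute), obstruction (iv)—the pivot for forbidding crossings of $\{P=0\}$—is unproven, and with it assertion (v) of Lemma \ref{LEMAll0} and the whole loop analysis collapse.

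Two smaller points. In (iii) the mechanism is not ``Lemma \ref{LEMGaussBonnet} forces a crossing'': the competitor is the vertical translate $\ell-\sigma(0,s)$ of the loop itself, which has the same length, passes through $\omega(t)$, and has strictly larger weighted area because $x_2\mapsto Q(x_1,x_2)$ is decreasing for $x_1>0$; then (i) applies. In (v) and (vii), the sentence ``one produces near that point a closed curve of no greater length but at least as much weighted area'' is exactly the nontrivial step you would need to prove: the paper establishes second-derivative (curvature) bounds on the level sets $\{Q=Q^*\}$ and $\{Q=-q^*\}$ (the functions $\varphi$, $\varphi_\pm$) to show that a disc of radius $L(\ell)/(2\pi)$ tangent at $\omega(t^*)$ fits inside the super/sublevel set, and in (vii) it additionally needs Lemma \ref{PROPPrelLEM2} to exclude that $\omega(t^*)$ lies on the wrong branch $\varphi_-$; none of this follows from the isoperimetric inequality of Lemma \ref{LEMStokesIso}(ii) alone.
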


\begin{proof}
Assertion (i) follows from assertion (ii) with $t_1=s_{\ell}^{-}$ and $t_2=s_{\ell}^{+}$. To prove (ii), we consider $t_1<t_2 \in I_{\omega}$ and a Lipschitz closed curve $\eta:[0,\tau] \rightarrow \R^2$ satisfying the assumptions. By reversing the orientation of $\eta$ if necessary, we may assume that there exists $t^* \in [t_2,L(\omega)]$ such that $\eta(0)=\omega(t^*)$, and that $A(\eta)$ and $A(\alpha)$, with $\alpha:= \omega|_{ [t_1,t_2]} * [\omega(t_2),\omega(t_1)]$, have the same sign. Define, for any $r>0$, the curve $\eta^r:[0,\tau] \rightarrow \R^2$ by $\eta^r(t):=\eta(0)+r(\eta(t)-\eta(0))$. Next, choose $r\in (0,1]$ such that $A(\eta^r)=A(\alpha)$. By construction, $L(\eta^r)\leq L(\eta)$, and the concatenated path
\[
\zeta := \omega|_{ [0,t_1]} * [\omega(t_1),\omega(t_2)] * \omega|_{ [t_2,t^*]} * \eta^r *  \omega|_{ [t^*,L(\omega)]}
\]
connects $\omega(0)=\bar{\omega}(0)$ to $\omega(\epsilon)=\bar{\omega}(\epsilon)$. Furthermore, by Lemma \ref{LEMStokesIso} (i), it satisfies
\[
 \int_{\zeta} P^2 \, dx_2= \int_{\omega} P^2 \, dx_2 - \int_{\alpha} P^2 \, dx_2 + \int_{\eta^r} P^2 \, dx_2 = A(\eta^r)- A(\alpha)=0.
\]
Using the assumption $L(\eta) \leq L(\omega|_{ [t_1,t_2]})- L([\omega(t_1),\omega(t_2)])$, we compute
\[
L(\zeta)-L(\omega) = L([\omega(t_1),\omega(t_2)]) + L(\eta^r) -  L(\omega|_{ [t_1,t_2]}) \leq L(\eta^r) - L(\eta) \leq 0.
\]
Thus, $\zeta$ minimizes the length among all curves satisfying (\ref{OptimPlane}). Since $\zeta$ is not analytic and not identical to $\bar{\omega}$, this is a contradiction.

To prove (iii), let  $\ell$ be a simple loop of $\omega$, $t\in I_{\omega}\setminus J_{\ell}$, $\sigma =\pm 1$, and $s>0$ such that $\sigma P \geq 0$ over $\ell$ and $\omega(t)+ \sigma(0,s) \in \mbox{spt}(\ell)$. We treat only the case $\sigma=1$, the proof for $\sigma=-1$ follows in the same manner. By reversing the orientation of $\ell$ if necessary, we may assume that it is positively oriented. Since $\ell$ is simple, the weighted area of the loops $\ell$ and $\eta:=\ell - (0,s)$ are given by the integrals of $Q(x)=4x_1P(x)$ over $\mathcal{E}_1(\ell)$ and $\mathcal{E}_1(\eta)$, respectively. Observe that for every $x_1>0$, the function $x_2 \mapsto Q(x_1,x_2) = 4x_1 (x_1^2-x_2^m)$ is decreasing on $\R$. Consequently, since $Q\geq 0$ over $\ell$, we have
\[
|A(\ell)| = \int_{\mathcal{E}_1(\ell)} Q(x) \, dx < \int_{\mathcal{E}_1(\eta)} Q(x_1,x_2-(0,s)) \, dx =|A(\eta)|.
\]
By construction, $\omega(t) \in \mbox{spt}(\eta)$ and $L(\eta) = L(\ell)$. Therefore, the result follows from (i).

To prove (iv), we need to perform a reflection with respect to the set
\[
S:=(\{0\}\times (-\infty,0)) \cup \Bigl\{(x_1,x_2)\in \R^2 \, \vert \, P(x_1,x_2)= 0, x_1\geq 0\Bigr\}.
\]
Since $m\geq 5$, $S$ is a $1$-dimensional submanifold of $\R^2$ of class at least $C^2$. Consequently, the signed distance function $d_S$ to $S$, assumed to be positive on the open set $S^+=(0,+\infty)\times (0,-\infty)\cup \{P>0,x_2\geq 0\}$, is of class at least $C^2$ and solution to the eikonal equation $|\nabla d_S|=1$ in a neighborhood of $S$ (see \cite[Lemma 14.16]{gt83}). Thus, there is a ball $\mathcal{B}$ centered at $\bar{\omega}(0)=(0,0)$ such that for every $x\in S\cap \mathcal{B}$, $\nabla d_S(x)$ is the unit normal vector at $x$, pointing toward $S^+$. Moreover, for any small $s\in \R$, the point $x_s:=x+s\nabla d_S(x)$ belongs to $\mathcal{B}$ and satisfies $d_S(x_s)=s$ and $\nabla d_S(x_s)=\nabla d_S(x)$.  In particular, for distinct points $x, y$ in $S\cap \mathcal{B}$, the lines $\{x_s, s \in \R \}$ and $\{y_s, s \in \R \}$, which correspond to the orbits of the vector field $\nabla d_S$, do not intersect as long as both $x_s$ and $y_s$ remain within $\mathcal{B}$. For each $r\in (0,1]$, we define the reflection map $R_r:\mathcal{B} \rightarrow \R^2$ with respect to $S$ as
\[
R_r(x) := x- (1+r) \, d_S(x)\nabla d_S(x) \qquad \forall x \in \mathcal{B}.
\]
By shrinking $\mathcal{B}$ if necessary, $R_r$ is well-defined and constitutes a diffeomorphism onto its image that reverses orientation. Furthermore, since the closure $\bar{S}^+$ of $S^+$ is convex, the projection $\pi$ onto $\bar{S}^+$ is  well-defined, and we have $R_r(x):=x+(1+r)(\pi(x)-x)$ for any $x\in \R^2\setminus S^+$, which shows that $R_r$ is $1$-Lipschitz on $\R^2\setminus S^+$. Now, consider $t_1< t_2$ in $I_{\omega}$ satisfying the assumptions of (iv). If $\epsilon>0$ is sufficiently small, the Lipschitz curve $\omega_R:[t_1,t_2] \rightarrow \R^2$ defined by $\omega_R:=R_1\circ \omega|_{ [t_1,t_2]}$ is well-defined, and its support is contained in $\R^2\setminus S^+$. Moreover, by the above discussion, the mapping $\psi:(s,t) \mapsto \bar{\omega}(t)+s\bar{\nu}(t)$, where $\bar{\nu}(t):=|\dot{\omega}(t)|\nabla d_S(\bar{\omega}(t))=(1,-\bar{m}t^{\bar{m}-1})$, provides a diffeomorphism that enables the computation of the weighted area of $\omega *\hat{\omega}$. Here, $\hat{\omega}$ denotes the segment of $\omega$ joining $\omega(t_2)$ to $\omega(t_1)$. The Jacobian determinant of $\psi$ at $(s,t)$ is equal to $|\dot{\bar{\omega}}(t)|^2+s\langle \dot{\bar{\omega}}(t),\dot{\bar{\nu}}(t)\rangle$. Considering the winding number $k =\pm 1$ of a point in the open set $\mathcal{E}$ enclosed by the simple closed curve $\omega|_{ [t_1,t_2]} * \hat{\omega}$, we have
\[
A\left(\omega|_{ [t_1,t_2]} * \hat{\omega}\right) = k \int_{\mathcal{E}} Q(x)\, dx = k \int_{0}^{+\infty} \int_{N_t} Q(\psi(-s,t)) \left(|\dot{\bar{\omega}}(t)|^2- s\langle \dot{\bar{\omega}}(t), \dot{\bar{\nu}}(t)\rangle \right) \, ds \, dt,
\]
where, for each $t\in [0,+\infty)$, the interval $N_t\subset [0,+\infty)$ represents the set of $s\geq 0$ such that $\Psi(-s,t)\in \mathcal{E}$. The reflection map $R_1$ reverses the orientation and, by construction, the open set $\mathcal{E}_R$, enclosed by the simple closed curve $\omega_{R} * \hat{\omega}$, coincides with the set of $\psi(s,t)$ for $t\in [0,+\infty)$ and $s\in N_t$. Thus,
\[
A\left(\omega_{R} * \hat{\omega}\right) = - k \int_{\mathcal{E}_{R}} Q(x)\, dx = - k \int_{0}^{+\infty} \int_{N_t} Q(\psi(s,t)) \left(|\dot{\bar{\omega}}(t)|^2+ s\langle \dot{\bar{\omega}}(t), \dot{\bar{\nu}}(t)\rangle \right) \, ds \, dt.
\]
Since $Q$ is positive on $\mathcal{E}_R$ and negative on $\mathcal{E}$, we conclude that $A(\omega_R * \hat{\omega})$ and $A(\omega|_{ [t_1,t_2]} * \hat{\omega})$ have the same sign and satisfy
\begin{eqnarray}\label{22dec1}
\left|A(\omega_R * \hat{\omega})\right| - \left|A\left(\omega|_{ [t_1,t_2]} * \hat{\omega}\right)\right| = \int_0^{+\infty} \int_{N_t} \Delta (s,t) \left|\dot{\bar{\omega}}(t)\right|^2 \, ds \, dt,
\end{eqnarray}
where $\Delta(s,t)$ is defined as
\[
\Delta(s,t) := Q^+(s,t) +Q^-(s,t) + s \, a(t)  \left(Q^+(s,t)-Q^-(s,t)\right).
\]
with  $Q^{\sigma}(t,s):=Q(\psi(\sigma s,t))$ for $\sigma=\pm 1$ and $a(t):=s\langle \dot{\bar{\omega}}(t), \dot{\bar{\nu}}(t)\rangle/|\dot{\bar{\omega}}(t)|^2$. We claim that $\Delta(s,t) >0$ for all $(s,t)$ satisfying $t\in [0,+\infty)$ and $s\in N_t$.  On the one hand, we have
\[
a (t) = \frac{\langle \dot{\bar{\omega}}(t), \dot{\bar{\nu}}(t)\rangle}{|\dot{\bar{\omega}}(t)|^2} = \frac{-\bar{m}\left(\bar{m}-1\right) t^{\bar{m}-2}}{1+\bar{m}^2t^{m-2}} = - \bar{m}\left(\bar{m}-1\right) t^{\bar{m}-2} + o(t^{\bar{m}-2}).
\]
On the other hand, recalling that $Q^{\sigma}=4(\bar{\omega}_1+s\bar{\nu}_1)((\bar{\omega}_1+s\bar{\nu}_1)^2-(\bar{\omega}_2+s\bar{\nu}_2^m))$ (where we omit the dependence on $t$), the Newton binomial formula gives
\[
Q^{\sigma} = 4 \sum_{i=1}^{m+1} \sigma^{i} c_i s^{i} \quad  \mbox{with} \quad c_i= \binom{3}{i} \bar{\nu}_1^{i}\bar{\omega}_1^{3-i} - \binom{m}{i} \bar{\omega}_1 \bar{\nu}_2^{i}\bar{\omega}_2^{m-i} - \binom{m}{i-1} \bar{\nu}_1 \bar{\nu}_2^{i-1}\bar{\omega}_2^{m-i+1},
\]
with the convention that the binomial coefficient $\binom{k}{l}$ is zero when $l>k$. Thus, by setting $M:=(m+1)/2$, we obtain
\[
\Delta(s,t) = 8 \sum_{k=1}^{M} \left( c_{2k} + a c_{2k-1} \right) s^{2k} = 8 \sum_{k=1}^{2} \left( c_{2k} + a c_{2k-1} \right) s^{2k}  + 8 \sum_{k=3}^{M} \left( c_{2k} + a c_{2k-1} \right) s^{2k}.
\]
Expanding $c_i$ as a function of $t$ as $t \rightarrow 0$, we have
\[
c_i= \binom{3}{i} t^{\bar{m}(3-i)}\left( 1+o(1)\right) -  \binom{m}{i-1}  \left(-\bar{m}\right)^{i-1} t^{(\bar{m}-1)(i-1)} t^{m-i+1}\left( 1+o(1)\right),
\]
because the remaining terms are negligible. Note that for $i\geq 4$, the first term in the above expression vanishes. For $k\geq 3$, we verify that $c_{2k}+ac_{2k-1}>0$ for $t>0$ small, since $a<0$, $c_{2k}>0$ and $c_{2k-1}<0$. Moreover, using the Taylor expansion of $a(t)$ derived earlier and recalling that $m\geq 5$, we obtain
\[
c_2+ac_1  = 3t^{\bar{m}} + o (t^{\bar{m}}) \quad \mbox{and} \quad c_4+ac_3 = -\bar{m}\left( \bar{m}-1\right) t^{\bar{m}-2} + o (t^{\bar{m}-2}).
\]
In conclusion, for all $t,s$ with $t$ small, we have
\begin{eqnarray*}
\Delta(s,t) & \geq & 8 \left( 3t^{\bar{m}} + o (t^{\bar{m}})  \right) s^2 + 8 \left(-\bar{m}\left( \bar{m}-1\right) t^{\bar{m}-2} + o (t^{\bar{m}-2})\right) s^4\\
& = & 8 s^2 \left( 3t^{\bar{m}} - s^2 \bar{m}\left( \bar{m}-1\right) t^{\bar{m}-2} + o (t^{\bar{m}-2})\right).
\end{eqnarray*}
Since $\Psi(-s,t)\in \mathcal{E}$ for all $(s,t)$ with $t\in [0,+\infty)$ and $s\in N_t$, $\mathcal{E}$ is contained in the open half plane $\{x_1> 0\}$, and $\bar{\omega}_1(t)\geq0$ for all $t$ (by Lemma \ref{PROPPrelLEM1} (i)), it follows that $\bar{\omega}_1(t)-s\bar{\nu}_1(t)\geq 0$, which shows $s\leq t^{\bar{m}}$. Therefore, $\Delta(s,t)>0$  for all $(s,t)$ satisfying $t\in [0,+\infty)$ and $s\in N_t$, and as a consequence, by (\ref{22dec1}), we have $|A(\omega_R * \hat{\omega})| > |A(\omega|_{ [t_1,t_2]} * \hat{\omega})| $. To complete the argument, for each $r\in (0,1]$, define the Lipschitz curve $\omega_R^r:[t_1,t_2] \rightarrow \R^2$ by $\omega_R^r:=R_r\circ \omega|_{ [t_1,t_2]}$, which provides a continuous deformation of $\omega_R$. Since $|A(\omega_R * \hat{\omega})| > |A(\omega|_{ [t_1,t_2]} * \hat{\omega})| >0 $, where $A(\omega_R * \hat{\omega})$ and $A(\omega|_{ [t_1,t_2]} * \hat{\omega})$ have the same sign, and since $A(\omega_R^r * \hat{\omega})$ varies continuously with $r$ and tends to $0$ as $r \downarrow 0$, there is $\bar{r}>0$ such that
\[
A\left(\omega_R^r * \hat{\omega}\right)= A\left(\omega|_{ [t_1,t_2]} * \hat{\omega}\right).
\]
Consider the concatenation $\zeta:=\omega|_{ [0,t_1]}*\omega_R^{\bar{r}} *\omega|_{ [t_2,L(\omega)]}$. By the $1$-Lipschitz property of $R_{\bar{r}}$, we have $L(\omega_R^{\bar{r}})\leq L(\omega|_{ [t_1,t_2]})$. Moreover, by Lemma \ref{LEMStokesIso} (i), we can write
\begin{multline*}
 \int_{\zeta} P^2 \, dx_2  =   \int_{\omega|_{ [0,t_1]}} P^2 \, dx_2 + \int_{\omega_R^r} P^2 \, dx_2 + \int_{\omega|_{ [t_2,L(\omega)]}} P^2 \, dx_2 \\
  =  \int_{\omega} P^2 \, dx_2 + \int_{\omega_R^r} P^2 \, dx_2 - \int_{\omega} P^2 \, dx_2  = A(\omega_R^r * \hat{\omega}) - A\left(\omega|_{ [t_1,t_2]} * \hat{\omega}\right)= 0.
\end{multline*}
Thus, the curve $\zeta$ minimizes the length among all curves satisfying (\ref{OptimPlane}). However,  it is not analytic nor identical to $\bar{\omega}$, leading to a contradiction.

To prove (v), suppose that $\ell$ is a loop of $\omega$ and $t^*\in I_{\omega}\setminus (s_{\ell}^-,s_{\ell}^+)$ such that $\max_{t\in J_{\ell}}|Q(\omega(t))| \leq Q(\omega(t^*))$. Since the loop encloses a set with nonempty interior, we have $Q^*:=Q(\omega(t^*))>0$. For every $x_2\in \R$, the function $x_1 \geq 0 \mapsto Q(x_1,x_2)$ is convex. For $x_2\geq 0$ it vanishes at $x_1=x_2^{\bar{m}}$ with a nonnegative derivative, and for every $x_2< 0$, it vanishes at $x_1=0$ with a nonnegative derivative. Thus, the set $\{Q= Q^*,x_1\geq 0\}$ forms a curve, and by the implicit function theorem, there is a smooth function $\varphi: \R \rightarrow (0,+\infty)$ such that
\[
\Bigl\{(x_1,x_2) \in \R^2 \, \vert \, x_1 \geq 0, \, Q(x_1,x_2) = Q^*\Bigr\}=\Bigl\{ (\varphi(x_2),x_2) \, \vert \, x_2 \in \R \Bigr\}.
\]
Noting that $\varphi'(x_2)=-\frac{\partial Q}{\partial x_2}\left(\varphi(x_2),x_2\right)/\frac{\partial Q}{\partial x_1}\left(\varphi(x_2),x_2\right)$, we compute for any for every $x_2\in \R$:
\[
\varphi'(x_2) = \frac{m\varphi(x_2)x_2^{m-1}}{3\varphi(x_2)^2-x_2^m} \quad \mbox{and} \quad \varphi''(x_2) = \frac{m(m-1)\varphi(x_2)x_2^{m-2}  }{3\varphi(x_2)^2-x_2^m} -  \frac{2m^2 \varphi(x_2)x_2^{3m-2}  }{(3\varphi(x_2)^2-x_2^m)^3}.
\]
If $x_2\geq 0$, since $\varphi(x_2)> x_2^{\bar{m}}$ and $P(\varphi(x_2),x_2))=Q^*/(4\varphi(x_2))>0$, we obtain
\[
 \varphi''(x_2) \leq \frac{m(m-1)\varphi(x_2)x_2^{m-2}  }{2\varphi(x_2)^2+P(\varphi(x_2),x_2)} \leq \frac{m(m-1)}{2} \frac{x_2^{m-2}}{\varphi(x_2)} \leq  \bar{m}(m-1)  x_2^{\bar{m}-2}.
\]
If $x_2<0$, since for every $a>0$, the function $z\geq 0\mapsto za^3/(3z^2+a)^3$ attains its maximum at $z=\sqrt{a/15}$ with value $15^{5/2}\sqrt{a}/18^3$ and $\varphi(x_2)>0$, we have
\[
0 \leq \varphi''(x_2) \leq  -  \frac{2m^2 \varphi(x_2)x_2^{3m-2}  }{(3\varphi(x_2)^2-x_2^m)^3} = \frac{2m^2}{x_2^2} \frac{\varphi(x_2)(-x_2^{m})^3  }{(3\varphi(x_2)^2+ (-x_2)^m)^3} \leq \frac{2m^215^{\frac{5}{2}}}{18^3} |x_2|^{\bar{m}-2}.
\]
Therefore, there is a constant $K>0$ such that we have for every $\bar{x}_2\in [-1,1]$, we have
\[
\varphi \left(x_2\right) \leq \varphi \left(\bar{x}_2\right) +  \varphi'\left(\bar{x}_2\right) \left(x_2-\bar{x}_2\right) + K  \left(x_2-\bar{x}_2\right)^2 \qquad \forall x_2 \in \R.
\]
This shows that for sufficiently small $\epsilon>0$,  there is a disc $\mathcal{D}$ of radius $\rho:=L(\ell)/(2\pi)$, whose boundary is a circle passing through $\omega(t^*)$, and which is contained in $\{Q\geq Q(\omega(t^*))\}$. Therefore, let $\eta$ be a parametrization of the circle with length $L(\ell)$. By Lemma \ref{LEMStokesIso} (ii), we have
\[
|A(\ell)| \leq \frac{1}{4\pi} \sup_{x\in \mathcal{E}(\ell)} |Q(x)| \, L(\ell)^2 = Q(\omega(t^*)) \, \pi \rho^2  \leq \int_{\mathcal{D}} Q(x) \, dx = A(\eta),
\]
which contradicts obstruction (i).

To prove (vi), we assume that $\max_{t\in I_{\ell_1}}|Q(\omega(t))| \geq \max_{t\in I_{\ell_2}}|Q(\omega(t))|$ and consider $t^* \in I_{\ell_1}$ such that $Q(\omega(t^*)) = \max_{t\in I_{\ell_1}}|Q(\omega(t))|$. The result follows directly from (v).

To prove (vii) we proceed similarly to (v). Let $K>0$, $\ell$ a loop of $\omega$, and  $t^*\in I_{\omega}\setminus J_{\ell}$ be such that $\max_{t\in J_{\ell}} |Q(\omega(t))| \leq q^*$, with $q^*:=-Q(\omega(t^*))>0$, and $\omega_2(t^*)\geq K\epsilon$. For every $x_2>0$, the function $x_1 \geq 0 \mapsto Q(x_1,x_2)$ is convex, vanishes at $x_1=0$ and $x_1=x_2^{\bar{m}}$, and attains its minimum on the interval $[0,x_2^{\bar{m}}]$ at $x_1=h(x_2):=x_2^{\bar{m}}/\sqrt{3}$, with the value $-8x_2^{3\bar{m}}/(3\sqrt{3})$. By the implicit function theorem, there exist smooth functions $\varphi_{-}, \varphi_{+}: (x_2(q^*),+\infty) \rightarrow (0,+\infty)$, with $x_2(q^*)$ defined such that $-8x_2(q^*)^{3\bar{m}}/(3\sqrt{3})=-q^*$, satisfying $0<\varphi_{-} < h< \varphi_{+}<x_2^{\bar{m}}$, for which we have
\begin{multline*}
\Bigl\{\left(x_1,x_2\right) \in \R^2 \, \vert \, x_1 \geq 0, \, x_2>x_2\left(q^*\right), \, Q\left(x_1,x_2\right) = - q^*\Bigr\}\\
=\Bigl\{ \left(\varphi_-\left(x_2\right),x_2\right) \, \vert \, x_2>x_2\left(q^*\right) \Bigr\} \cup \Bigl\{ \left(\varphi_+\left(x_2\right),x_2\right) \, \vert \, x_2>x_2\left(q^*\right) \Bigr\}
\end{multline*}
and
\[
\Bigl\{x \in \R^2 \, \vert \, x_1 \geq 0, Q(x) < - q^*\Bigr\}=\Bigl\{ \left(x_1,x_2\right) \in \R^2 \, \vert \, \varphi_-\left(x_2\right)< x_1 < \varphi_+\left(x_2\right), \, x_2 > x_2\left(q^*\right) \Bigr\}.
\]
By Lemma \ref{PROPPrelLEM1} (iv) and (v), we have
\begin{eqnarray}\label{EQ30dec1}
q^* = 4\omega_1\left(t^*\right) \left|P\left(\omega\left(t^*\right)\right)\right| \leq 8 \epsilon^{\bar{m}} \beta \leq 8C \epsilon^{2m-1}.
\end{eqnarray}
Suppose $x^*=(x_1^*,x_2^*):=\omega(t^*)$ belongs to  the graph of $\varphi_-$. Then,
\[
P\left(x^*\right)= \left(x_1^*\right)^2 - \left(x_2^*\right)^m  =  \varphi_-\left(x_2^*\right)^2- \left(x_2^*\right)^m  <  h\left(x_2^*\right)^2 -\left(x_2^*\right)^m = - \frac{2\left(x_2^*\right)^m}{3}.
\]
Using the assumption $x_2^*\geq K \epsilon$ and $x^*_1 \geq C(K) \epsilon^{\bar{m}}$ (from Lemma \ref{PROPPrelLEM2}), it follows that $q^*\geq c\epsilon^{3\bar{m}}$ for some constant $c>0$, which contradicts (\ref{EQ30dec1}) for sufficiently small $\epsilon>0$. Consequently, we may assume that $x^*=\omega(t^*)$ lies on the graph of $\varphi_+$. As in the proof of (v), the first and second derivatives of $\varphi_+$ are given by
\[
\varphi_+'(x_2) = \frac{m\varphi_+(x_2)x_2^{m-1}}{3\varphi_+(x_2)^2-x_2^m} \quad \mbox{and} \quad \varphi_+''(x_2) = \frac{m(m-1)\varphi_+(x_2)x_2^{m-2}  }{3\varphi_+(x_2)^2-x_2^m} -  \frac{2m^2 \varphi_+(x_2)x_2^{3m-2}  }{(3\varphi_+(x_2)^2-x_2^m)^3}.
\]
The denominator $3(\varphi_+^2-h^2)$ is positive. Moreover, for every $x_2 \geq K\epsilon/2$, we have $\varphi_+(x_2) > h(x_2) \geq h(K\epsilon/2)$, implying, by (\ref{EQ30dec1}), that
\begin{eqnarray}\label{30dec2}
\varphi_+\left(x_2\right)^2- 2h^2\left(x_2\right) = \left (x^m_2-\frac{q^*}{4\varphi_+\left(x_2\right)}\right) - \frac{2x_2^m}{3} = \frac{x_2^m}{3} - \frac{q^*}{4\varphi_+(x_2)} >0,
\end{eqnarray}
for sufficiently small $\epsilon>0$. Consequently, for $x_2\in [K\epsilon/2,2\epsilon]$, we deduce
\[
\varphi_+''(x_2) \geq -  \frac{2m^2 \varphi_+(x_2)x_2^{3m-2}  }{(3\varphi_+(x_2)^2-x_2^m)^3} \geq -  \frac{2m^2x_2^{\bar{m}}  x_2^{3m-2}  }{(3h(x_2)^2)^3}  = - 2m^2 \, x_2^{\bar{m}-2}.
\]
Let $\mathcal{D}$ denote the open disc of radius $\rho:=L(\ell)/(2\pi)$, which touches the graph of $\varphi_+$ from below at $\omega(t^*)$. We claim that $\mathcal{D}$ lies within the set $\{|Q|\geq |Q(\omega(t^*))|=q^*\}$ for sufficiently small $\epsilon>0$. If not, there exists $x_2 \in [x_2^*-L(\ell),x_2^*+L(\ell)]$ such that $h(x_2)\geq \varphi_+(x_2^*)-L(\ell)$. Since $h$ is $1$-Lipschitz for small $\epsilon$, we infer $ \varphi_+(x^*_2)-h(x_2^*)\leq 2L(\ell)$. This contradicts the inequalities
\[
\varphi_+\left(x_2^*\right)-h\left(x_2^*\right)= \frac{\varphi_+\left(x_2^*\right)^2-h\left(x_2^*\right)^2}{\varphi_+\left(x_2^*\right)+h\left(x_2^*\right)} \geq \frac{h\left(x_2^*\right)^2}{\varphi_+\left(x_2^*\right)+h\left(x_2^*\right)} \geq 2 h\left(x_2^*\right) = \frac{2}{\sqrt{3}} \, \left(x_2^*\right)^{\bar{m}} \geq  \frac{2K^{\bar{m}}}{\sqrt{3}} \, \epsilon^{\bar{m}}
\]
and
\[
L(\ell)\leq C\beta^{1-\frac{1}{m}}\leq C^2\epsilon^{(3\bar{m}-1)(m-1)/m} = o\left(\epsilon^{\bar{m}}\right).
\]
following from (\ref{30dec2}) and Lemma \ref{PROPPrelLEM1} (v), (viii). We conclude as in (v).

To prove (viii), we assume that $\max_{t\in I_{\ell_1}}|Q(\omega(t))| \geq \max_{t\in I_{\ell_2}}|Q(\omega(t))|$. We consider $t^* \in I_{\ell_1}$ such that $|Q(\omega(t^*))| = \max_{t\in I_{\ell_1}}|Q(\omega(t))|$ and observe that, by Lemma \ref{PROPPrelLEM2}, we may assume that $\omega_2(t^*) \geq K \epsilon$. If $Q(\omega(t^*))\geq 0$, the result follows from (v); otherwise, it follows from (vii).
\end{proof}

\subsection{Intersections of $\omega$ with $\{P=0\}$ and loops }

By analyticity, the sets $\mbox{spt}(\omega)$ and $\{P=0\}$ intersect finitely many times. Consequently, we define $\tau_0=0 < \tau_1 < \cdots < \tau_N = L(\omega)$ such that
\[
\left(P\circ \omega \right)^{-1}(\{0\}) = \Bigl\{\tau_0=0, \ldots, \tau_N=L(\omega)\Bigr\}.
\]
We then set
\[
I_i := \bigl[\tau_i, \tau_{i+1}\bigr] \qquad \forall i \in \mathcal{I} := \left\{0, \ldots, N-1\right\}.
\]
The intervals $(\tau_i,\tau_{i+1})$ for $i\in \mathcal{I}$ are the maximal intervals where the function $t\mapsto P(\omega(t))$ is either strictly positive or strictly negative. By Lemma \ref{PROPPrelLEM1} (i), the same property holds for $t\mapsto Q(\omega(t))$. We denote by $\mathcal{I}^+$ (resp. $\mathcal{I}^-$) the set of $i\in \mathcal{I}$ such that $P\circ \omega|_{(\tau_i,\tau_{i+1})}>0$ (resp. $P\circ \omega|_{(\tau_i,\tau_{i+1})}<0$). For every $i\in \mathcal{I}^+$ (resp. $i \in \mathcal{I}^-$), we define $\sigma_i=1$ (resp. $\sigma_i=-1$). Finally, for each $i\in \mathcal{I}$,  we refer to the first loop of $\omega|_{ I_i}$, if it exists, as a loop $\ell$ associated with a subinterval $J_{\ell}=[s_{\ell}^-,s_{\ell}^+]\subset I_i$ such that $\omega|_{ [\tau_i,s_{\ell}^+)}$ is injective. Such a loop is necessarily simple.

The following result consolidates several properties essential for completing the proof of Proposition \ref{PROPMain}. Assertions (i)-(iv) are based on our preliminary results and the Gauss--Bonnet formula. Assertions (v), (vi) and (xi) are direct consequences of the obstructions described in Lemma \ref{LEMObs}. Lastly, assertions (vii)-(x) are established through a combination of curvature arguments, incorporating the Gauss--Bonnet formula, and the obstructions from Lemma \ref{LEMObs}.

\begin{lemma}\label{LEMAll0}
By taking $\epsilon_0>0$ from Lemma \ref{PROPPrelLEM1} smaller if necessary,  for any $\epsilon \in (0,\epsilon_0)$ and every $i\in \mathcal{I}$, then following properties hold.
\begin{itemize}
\item[(i)] The signed curvature $\kappa$ of the restriction $\omega|_{ I_i}$ satisfies $\sigma_i \lambda \kappa\geq 0$.
\item[(ii)] If $\omega|_{ I_i}$ admits a first loop $\ell$ then, by setting $\sigma:=\sigma_i \mbox{sgn}(\lambda)$, the set $\mathcal{E}(\ell)$ is strictly convex, $\mathcal{D}(\ell)=\mathcal{E}(\ell)=\mathcal{E}_{\sigma}(\ell)$,  $\sigma \mbox{ang}(\dot{\omega}(s_{\ell}^+),\dot{\omega}(s_{\ell}^-))\in (0,\pi)$, and $\int_{t\in J_{\ell}} \sigma \dot{\theta}(t)\, dt \in (\pi,2\pi)$.
\item[(iii)] $0\in \mathcal{I}^+$ and $\omega_2(\tau_{i+1})> \omega_2(\tau_0)=0$ for all $i\in \mathcal{I}$.
\item[(iv)] If $i\in \mathcal{I}^+$ and $i+1 \in \mathcal{I}$, then $i+1 \in \mathcal{I}^-$.
\item[(v)] If $i\in \mathcal{I}^-$, then $\omega|_{ I_i}$ admits a first loop $\ell$ associated with an interval $J_{\ell}\subset (\tau_i,\tau_{i+1})$.
\item[(vi)] There is at most one index $i \in \mathcal{I}^+$ such that $\omega|_{ I_i}$ is not injective.
\item[(vii)] If $i\in \mathcal{I}^+$, then $\omega|_{ I_i}$ admits at most one loop $\ell$.
\item[(viii)] If $\omega|_{ I_i}$ admits a first loop $\ell$, then for every $t\in [\tau_i,s_{\ell}^-]$, $\theta(t)\neq \sigma_i \pi/2 \, (\mbox{\rm mod } 2\pi)$.
\item[(ix)] If $\omega|_{ I_i}$ admits a unique loop, then $\lambda \cdot (\omega_2(\tau_{i+1})-\omega_2(\tau_{i})) \leq 0$.
\end{itemize}
Moreover, for every $K>0$, there exists $\epsilon_0(K)>0$ such that  for every $\epsilon \in (0,\epsilon_0(K))$ and every $i\in \mathcal{I}$, the following hold.
\begin{itemize}
\item[(x)] If $i\in \mathcal{I}^-$ and $\omega|_{ I_i}$ admits a first loop $\ell$ such that $\omega_2(s_{\ell}^-) \geq K\epsilon$ then $\ell$ is the unique loop of $\omega|_{ I_i}$.
\item[(xi)] There is at most one index $i \in \mathcal{I}^-$ such that $\omega|_{ I_i}$ admits a loop $\ell$ such that $\omega_2(s_{\ell}^-) \geq K \epsilon$.
\end{itemize}
\end{lemma}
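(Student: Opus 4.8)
My plan is to handle the eleven assertions in four groups, always exploiting that, by (\ref{GaussBonnet1}) and the second equation in (\ref{SYSomega}), the signed curvature of $\omega$ is $\kappa=\dot\theta=\lambda\,Q(\omega)$. Since $\omega_1>0$ on $(0,L(\omega)]$ (Lemma \ref{PROPPrelLEM1}(i)), the functions $Q\circ\omega$ and $P\circ\omega$ share their sign, which on $I_i$ is constant equal to $\sigma_i$; together with $\lambda\neq0$ (Lemma \ref{PROPPrelLEM1}(vii)) this gives that $\kappa$ has the constant sign $\sigma_i\mbox{sgn}(\lambda)$ on $I_i$. Assertion (i) is then immediate, as $\sigma_i\lambda\kappa=\sigma_i\lambda^2 Q(\omega)\ge0$. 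For (iii), $0\in\mathcal{I}^+$ follows from $\theta(0)\in(-\pi/2,\pi/2)$ and $\max P(\omega)>0$ (Lemma \ref{PROPPrelLEM1}(ii),(iii)), while at any interior zero $\tau_{i+1}$ of $P\circ\omega$ one has $\omega_2(\tau_{i+1})^m=\omega_1(\tau_{i+1})^2>0$, hence $\omega_2(\tau_{i+1})>0$ because $m$ is odd.

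I would then prove the two Gauss--Bonnet statements. A first loop $\ell$ is a simple closed curve, smooth except at its base point $\omega(s_\ell^-)=\omega(s_\ell^+)$, on which $\sigma\kappa\ge0$ with $\sigma:=\sigma_i\mbox{sgn}(\lambda)$. Its single corner must be convex, $\sigma\delta_0\ge0$: otherwise the strictly monotone tangent would turn by more than $2\pi$, forcing a self-intersection and contradicting simplicity. Lemma \ref{LEMGaussBonnet1} then gives strict convexity (strict since $\kappa\neq0$ on $\mbox{int}(J_\ell)$), the orientation $\mathcal{D}(\ell)=\mathcal{E}_\sigma(\ell)=\mathcal{E}(\ell)$, and $\sigma\delta_0\in[0,\pi)$; the value $\sigma\delta_0=0$ is excluded because a smooth closure $\dot\omega(s_\ell^-)=\dot\omega(s_\ell^+)$ would make $\omega$ periodic by uniqueness for (\ref{SYSomega}), contradicting $A_0\neq A_\epsilon$. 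The Gauss--Bonnet formula (\ref{GaussBonnet}) then yields $\int_{J_\ell}\sigma\dot\theta=2\pi-\sigma\delta_0\in(\pi,2\pi)$ and $\sigma\mbox{ang}(\dot\omega(s_\ell^+),\dot\omega(s_\ell^-))=\sigma\delta_0\in(0,\pi)$, which is (ii). For (iv), two consecutive positive intervals would make $\tau_{i+1}$ a tangential contact of $\omega$ with $\{P=0\}$ from the side $\{P\ge0\}$, and I would rule this out at the contact point using the constant sign of $\kappa$ and the non-crossing Lemma \ref{LEMGaussBonnet}, which is incompatible with $\omega$ leaving and re-entering $\{P>0\}$ tangentially.

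The third group comprises the assertions that follow almost directly from Lemma \ref{LEMObs}. For (v), on a negative interval $\omega$ cannot be injective on $[\tau_i,\tau_{i+1})$ by obstruction (iv), so a first (hence simple) loop exists; as its points satisfy $P<0$ while $P(\omega(\tau_i))=P(\omega(\tau_{i+1}))=0$, one gets $J_\ell\subset(\tau_i,\tau_{i+1})$. For (vi), two non-injective positive intervals would produce loops with disjoint interiors and $Q\circ\ell_1,Q\circ\ell_2\ge0$, which obstruction (vi) forbids. Likewise (xi) follows from obstruction (viii), since two loops in distinct negative intervals with $\omega_2(s_\ell^-)\ge K\epsilon$ have disjoint interiors.

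The delicate assertions (vii)--(x) are the main obstacle, because there obstructions (vi) and (viii) do not apply directly: inside a single interval a second loop may be nested in the first rather than have a disjoint interior. The unifying idea is that on each $I_i$ the tangent angle $\theta$ is strictly monotone (constant sign of $\kappa$) with total turning controlled by (ii), and that the first loop bounds a strictly convex region. For (viii), a vertical tangent $\theta=\sigma_i\pi/2\ (\mbox{mod } 2\pi)$ before the loop would, via the tangent-cone description of Lemma \ref{LEM2025}, place a point of $\omega$ in vertical alignment with a point of the convex loop, contradicting the alignment obstruction (iii). For (vii) and the uniqueness part of (x), a putative second loop would either force such a forbidden vertical tangent/alignment or make the tangent turn by too much, again contradicting (viii) or obstruction (iii); for (x) one first propagates $\omega_2\ge K\epsilon$ forward by Lemma \ref{PROPPrelLEM2}, so that a disjoint second loop is excluded by obstruction (viii). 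Finally, (ix) is a computation: for a unique loop, writing $\omega_2(\tau_{i+1})-\omega_2(\tau_i)=\int_{\tau_i}^{\tau_{i+1}}\sin\theta\,dt$ and using the strict monotonicity of $\theta$, the turning range $(\pi,2\pi)$ from (ii), and the absence of early vertical tangents from (viii), one pins down the sign of this displacement relative to $\mbox{sgn}(\lambda)$, giving $\lambda(\omega_2(\tau_{i+1})-\omega_2(\tau_i))\le0$.
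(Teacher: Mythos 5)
Your treatments of (i), (iii), (v), (vi) and (xi) are correct and coincide with the paper's. The genuine gap is in (ii), at precisely the step that everything later leans on: you claim the single corner of a first loop must be convex because ``otherwise the strictly monotone tangent would turn by more than $2\pi$, forcing a self-intersection and contradicting simplicity.'' That implication is false. Hopf's Umlaufsatz only constrains the total turning \emph{including} the corner contribution to equal $\pm 2\pi$; a simple closed curve with one corner can perfectly well have smooth turning $2\pi+\beta>2\pi$ compensated by a reflex corner of exterior angle $-\beta$. Concretely, parametrize an arc by its tangent angle $\alpha\in[0,2\pi]$ with radius of curvature $\rho(\alpha)=1+(a\cos\alpha+b\sin\alpha)/\pi$ with $a,b<0$ small: it has strictly positive curvature, turns exactly $2\pi$, and ends displaced from its start by the small vector $(a,b)$ with the same horizontal tangent direction; closing it by a short arc whose tangent directions sweep $[0,\beta]$ (possible as soon as $\arctan(|b|/|a|)<\beta$) yields a \emph{simple} closed curve, smooth with $\kappa>0$ off one point, with smooth turning $2\pi+\beta$ and a single reflex corner. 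So convexity of the corner cannot be obtained intrinsically from simplicity and the sign of $\kappa$; the paper derives it extrinsically from the fact that $\ell$ is the \emph{first} loop of $\omega$: if the corner were reflex, the points $\omega(s_{\ell}^--s)$, $s>0$ small, would lie in $\mathcal{D}(\ell)$, whereas $\omega(0)=A_0\in\{P=0\}$ lies outside $\mathcal{D}(\ell)$, so $\omega|_{[0,s_{\ell}^-]}$ would have to cross $\mbox{spt}(\ell)$, contradicting the injectivity of $\omega|_{[0,s_{\ell}^+)}$. Without this, your (ii) --- and all of (vii)--(x), which you build on it --- is unsupported.

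Beyond this, the assertions you yourself call the main obstacle, (vii)--(x), are only gestured at: ``would either force such a forbidden vertical tangent/alignment or make the tangent turn by too much'' is not an argument. The paper's proofs require genuine constructions: for (vii), a case analysis ($\bar{s}<s_{\ell}^-$ versus $\bar{s}>s_{\ell}^-$) producing auxiliary simple closed curves whose orientation is pinned down by Lemma \ref{LEMGaussBonnet1}(ii), then Lemma \ref{LEM2025} to get $\mathcal{E}(\ell)\subset\mathcal{E}(\bar{\ell})$ and obstruction (iii) of Lemma \ref{LEMObs} to conclude; for (viii), the construction of a closed curve through the vertical segment above $\omega(\bar{t})$, with Lemma \ref{LEMGaussBonnet} needed to verify the loop is trapped in its interior; and (ix) is not a ``computation'' --- the sign of $\int\sin\theta\,dt$ is not determined by monotonicity of $\theta$ and the turning range alone --- but again a containment-plus-vertical-translation argument ending with obstruction (iii). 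Finally, your sketch of (iv) invokes the wrong tool (Lemma \ref{LEMGaussBonnet}); what is actually used is that at a tangential contact with $\{P=0\}$ one has $\dot{\theta}=\lambda Q(\omega)=0$, while the boundary of the convex set $\{P\geq 0,\ x_1\geq 0\}$ has nonzero curvature at that point, which is incompatible with $\omega$ remaining in that set.
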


\begin{proof}
Assertion (i) follows from the construction, (\ref{SYSomega}), and (\ref{GaussBonnet1}). To prove (ii), suppose that $\omega|_{ I_i}$ admits a first loop $\ell$ on an interval $[s_{\ell}^-, s_{\ell}^+]$. Reversing the orientation if necessary, we may assume that $\ell$ is positively oriented. Let $\delta$ be the discontinuity of the signed curvature of $\ell$ at $s_{\ell}^-$. Since $\dot{\ell}(s_{\ell}^-)\neq \dot{\ell}(s_{\ell}^+)$ (because $\omega$ is  solution of (\ref{SYSomega})), it follows that $\delta \neq 0$. If $\delta <0$, then the point $\eta(s_{\ell}^- -s)$ would lie in $\mathcal{D}(\ell)$ for sufficiently small $s>0$ (note that Lemma \ref{PROPPrelLEM1} (i)-(ii) ensure that $s_{\ell}^->0$). However, $\mathcal{D}(\ell)$ does not intersect the set $\{P=0\}$, which contains $\omega(0)=A_0$. Since $\omega|_{ [0,s_{\ell}^+)}$ is injective, this leads to a contradiction. We conclude the proof by applying Lemma \ref{LEMGaussBonnet1} (i), (\ref{GaussBonnet}), and (\ref{GaussBonnet1}), and by observing  that the signed curvature of $\ell$ is nonzero on its smooth part.

Assertion (iii) follows directly from Lemma \ref{PROPPrelLEM1} (i)-(ii). To prove (iv), suppose for contradiction that both $i$ and $i+1$ belongs to $\mathcal{I}^+$. In this case, the curve $t\mapsto \omega(t)$ must remain within the convex set $\{P\geq 0, \, x_1\geq 0\}$ for $t$ near $\tau_{i+1}$ and be tangent to the set $\{P=0\}$ at $\omega(\tau_{i+1})$. If $\omega(\tau_{i+1}))=0$, then we must have $\theta(\tau_{i+1})=\pm \pi/2  (\mbox{mod } 2\pi)$, which is prohibited (see the proof of  Lemma \ref{PROPPrelLEM1} (ii)). Otherwise, by Lemma \ref{PROPPrelLEM1} (i), $\omega(\tau_{i+1})$ belongs to $\{P\geq 0, \, x_1> 0\}$, a curve with nonzero curvature. It follows that $\dot{\theta}(\tau_{i+1})\neq 0$. However, because $P(\omega(\tau_{i+1}))=0$, equation (\ref{SYSomega}) implies that $\dot{\theta}(\tau_{i+1})=0$, leading to a contradiction.

Assertion (v) follows directly from obstruction (iv) in Lemma \ref{LEMObs}. Similarly, assertions (vi) and (xi) follow respectively from obstructions (vi) and (viii) in Lemma \ref{LEMObs}.

To prove (vii), we suppose for contradiction that $\omega|_{ I_i}$ admits a first loop $\ell$ on an interval $J_{\ell}=[s_{\ell}^-,s_{\ell}^+]\subset I_i$. By obstructions (vi)  of Lemma \ref{LEMObs}, the curve $\omega|_{ [s_{\ell}^+,\tau_{i+1}]}$ is injective. Therefore, it remains to show that the curves $\omega|_{ [\tau_i,s_{\ell}^+]}$ and $\omega|_{ (s_{\ell}^+,\tau_{i+1}]}$  do not intersect. Assume, for the sake of contradiction, that there is $\bar{t}$ in $(s_{\ell}^+,\tau_{i+1}]$ such that $\omega(\bar{t}) \in  \mbox{spt}(\omega|_{ [\tau_i,s_{\ell}^+]})$ and $\omega(t) \notin  \mbox{spt}(\omega|_{ [\tau_i,s_{\ell}^+]})$ for all $t \in (s_{\ell}^+,\bar{t})$. Note that, by analyticity, such a $\bar{t}$ must exist if  the curves $\omega|_{ [\tau_i,s_{\ell}^+]}$ and $\omega|_{ (s_{\ell}^+,\tau_{i+1}]}$  intersect. Now, consider $\bar{s}\in [\tau_i,s_{\ell}^+]$ such that $\omega(\bar{s})=\omega(\bar{t})$. Observe that $\bar{s}\neq s_{\ell}^-$, because otherwise $\omega(\bar{s})=\omega(s_{\ell}^-)=\omega(s_{\ell}^+)$, which contradicts the injectivity of $\omega|_{ [s_{\ell}^+,\tau_{i+1}]}$. We distinguish two cases: $\bar{s}< s_{\ell}^-$ and $\bar{s}>s_{\ell}^-$.

Case $\bar{s}< s_{\ell}^-$. Define $\bar{\ell} = \omega|_{[\bar{s},s_{\ell}^-]}*\omega|_{[s_{\ell}^+,\bar{t}]}$. By construction, $\bar{\ell}$ is a piecewise smooth continuous curve which is closed, simple, parametrized by arc length, and whose signed curvature has the same sign $\sigma =\pm 1$ as that of $\ell$ on its smooth segments. Since both $\ell$ and $\bar{\ell}$ have at most two singularities, Lemma \ref{LEMGaussBonnet1} (ii) gives $\mathcal{D}(\bar{\ell}) = \mathcal{E}_{\sigma}(\bar{\ell}) = \mathcal{E} (\bar{\ell})$ and $\mathcal{D}(\ell) = \mathcal{E}_{\sigma}(\ell) = \mathcal{E} (\ell)$. If $\sigma=1$, then both $\ell$ and $\bar{\ell}$ are positively oriented. By assertion (ii), $\mathcal{E}(\ell)$ is a convex set, and $\mbox{ang}(\dot{\omega}(s_{\ell}^+),\dot{\omega}(s_{\ell}^-))\in (0,\pi)\in (0,\pi)$. Consequently, for every nonzero vector $u$ such that $\mbox{ang}( \dot{\omega}(s_{\ell}^+),u)\in  (\mbox{ang}(\dot{\omega}(s_{\ell}^+),\dot{\omega}(s_{\ell}^-),\pi)$, we have  $\mbox{ang}( \dot{\omega}(s_{\ell}^-),u)\in  (\mbox{ang}(\dot{\omega}(s_{\ell}^-),\dot{\omega}(s_{\ell}^+),\pi)$. By Lemma \ref{LEM2025}, it follows that $\mathcal{E} (\ell)\subset \mathcal{E} (\bar{\ell})$. Therefore, there exists a translation of the loop $\ell$ in the directions $(0,\pm1)$ that intersects $\mbox{spt}(\bar{\ell})$. Lemma \ref{LEMObs} (iii) provides an obstruction. The case $\sigma=-1$ follows analogously.

Case $\bar{s}> s_{\ell}^-$. Define $\bar{\ell} = \omega|_{[\bar{s},\bar{t}]}$. Since $\omega|_{ [s_{\ell}^+,\tau_{i+1}]}$ is injective, $\bar{\ell}$  is a piecewise smooth continuous curve which is closed, simple, parametrized by arc length, and whose signed curvature has the same sign $\sigma =\pm 1$ as that of $\ell$ on its smooth segments. Noting that both $\ell$ and $\bar{\ell}$ have only one singularity, Lemma \ref{LEMGaussBonnet1} (ii) implies $\mbox{spt}(\omega|_{ (s_{\ell}^-,\bar{s})}) \subset \mathcal{E} (\bar{\ell})$. Consequently, there exists a translation of the loop $\ell$ in the directions $(0,\pm1)$ that intersects $\mbox{spt}(\bar{\ell})$. Lemma \ref{LEMObs} (iii) then provides an obstruction.

Assertion (x) follows in the same manner by noting that the assumption $\omega_2(s_{\ell}^-) \geq K \epsilon$, together with Lemma \ref{PROPPrelLEM2}  and  obstruction (viii) of Lemma \ref{LEMObs}, implies that the curve $\omega|_{ [s_{\ell}^+,\tau_{i+1}]}$ is injective.

To prove (viii), we assume for the sake of contradiction that there is $\bar{t}\in [\tau_i,s_{\ell}^-]$ such that $\theta(\bar{t})=\sigma_i \pi/2  (\mbox{mod } 2\pi)$. We address the case $\sigma_i=1$, with the other case left to the reader. Since the curve $\omega$ points toward the set $\{P\geq0\}$ at $\omega(\tau_i)$ and remains within this set on the interval $J_{\ell}$, we must have $\bar{t}> \tau_i$ and $P(\omega(\bar{t}))>0$. Let $\bar{h}> 0$ be the infimum of $h> 0$ such that $x(h):=\omega(\bar{t})+h(0,1)$ lies in $\mbox{spt}(\omega|_{ [\tau_i,\bar{t}]}) \cup \{P=0\}$. Note that $\bar{h}>0$ because $P(\omega(\bar{t}))>0$ and $\omega|_{[\tau_i,\bar{t}]}$ is injective. If $x(\bar{h})$ belongs to $\mbox{spt}(\omega|_{ [\tau_i,\bar{t}]})$, then consider the unique $\tau \in [\tau_i,\bar{t})$ such that $x(\bar{h})=\omega(\tau)$, and define the curve $\eta := \omega|_{ [\tau,\bar{t}]} * [\omega(\bar{t}),\omega(\tau)]$. Otherwise, define $\eta := \omega|_{ [\tau_i,\bar{t}]} * [\omega(\bar{t}),x(\bar{h})] * P_{[x(\bar{h}),\omega(\tau_i)]}$, where $P_{[x(\bar{h}),\omega(\tau_i)]}$ denotes the segment of the curve $\{P=0,\, x_1\geq 0\}$ connecting $x(\bar{h})$ to $\omega(\tau_i)$. In both cases, the curve $\eta$ is closed and simple. Since $\theta(\bar{t})=\sigma_i \pi/2  (\mbox{mod } 2\pi)$ and $\dot{\theta}(\bar{t})\neq 0$, we have $\omega(\bar{t}+s) \in \mathcal{D}(\eta)$ for $s>0$ small. Moreover, the restriction $\omega|_{[\tau_i,s_{\ell}^+)}$ is injective, does not intersect $\{P=0\}$ and, by Lemma  \ref{LEMGaussBonnet}, $\omega|_{ [\bar{t},s_{\ell}^+)}$ does not intersect the segment $[\omega(\bar{t}),x(\bar{h})]$. Consequently, the curve $\omega|_{ (\bar{t},s_{\ell}^+)}$, and hence the loop $\ell$, is contained in  $\mathcal{D}(\eta)$. Therefore, a translation of the loop $\ell$ in the directions $(0,-1)$ must intersect $\mbox{spt}(\eta)$, specifically $\mbox{spt}(\omega|_{[\tau_i,\bar{t}]})$.  This leads to a contradiction, as stated  in obstruction (iii) of Lemma \ref{LEMObs}.

To prove (ix), we assume that $\omega|_{ I_i}$ contains a unique loop $\ell$, and we define the closed, simple curve
\[
\eta := \omega|_{ [\tau_i,s_{\ell}^-]} * \omega|_{ [s_{\ell}^+,\tau_{i+1}]} *  P_{[\omega(\tau_{i+1}),\omega(\tau_i)]},
\]
where $P_{[\omega(\tau_{i+1}),\omega(\tau_i)]}$ denotes the segment of the curve $\{P=0,\, x_1\geq 0\}$ connecting $\omega(\tau_{i+1})$ to $\omega(\tau_i)$. We consider the case $i\in \mathcal{I}^+$ and $\lambda >0$. In this setting, by assertion (i), the signed curvature of $\omega|_{ [\tau_i,s_{\ell}^-]}$ and $ \omega|_{ [s_{\ell}^+,\tau_{i+1}]}$ is nonnegative. Suppose, for the sake of contradiction, that $\omega_2(\tau_{i+1})>\omega_2(\tau_i)$. In this case, the segment $P_{[\omega(\tau_{i+1}),\omega(\tau_i)]}$ also has nonnegative signed curvature. Thus, by Lemma \ref{LEMGaussBonnet1} (ii), we have $\mathcal{D}(\eta) = \mathcal{E}_{\sigma}(\eta) = \mathcal{E}(\eta)$. Referring to assertion (ii), we deduce that  for every nonzero vector $u$ such that $\mbox{ang}( \dot{\omega}(s_{\ell}^+),u)\in  (\mbox{ang}(\dot{\omega}(s_{\ell}^+),\dot{\omega}(s_{\ell}^-),\pi)$, we have $\mbox{ang}( \dot{\omega}(s_{\ell}^-),u)\in  (\mbox{ang}(\dot{\omega}(s_{\ell}^-),\dot{\omega}(s_{\ell}^+),\pi)$. By Lemma \ref{LEM2025}, we conclude that $\mathcal{E} (\ell)\subset \mathcal{E} (\bar{\ell})$. This contradicts obstruction (iii) of Lemma \ref{LEMObs}. The other cases can be proven following the same reasoning.
\end{proof}

\subsection{A closer look at the first loop of $\omega$}

The results established in Lemma \ref{LEMAll0} are not yet sufficient to prove Proposition \ref{PROPMain}. To complete the proof, we must verify the necessary conditions stated in assertions (x) and (xi). These conditions will be derived from an analysis of the first loop of $\omega$. By Lemma \ref{PROPPrelLEM1} (vi), the curve $\omega$ is not injective. Therefore, we consider its first loop, defined on an interval $J_0:=[s_0^-,s_0^+]$, where $\omega(s_{0}^-)=\omega(s_{0}^+)$, and $s_0^+\in I_{\omega}$ is the smallest $s\in I_{\omega}$ such that $\omega|_{ [0,s)}$ is injective but $\omega|_{ [0,s]}$ is not. We denote this loop by $\ell_0$ and  set  $L_0:=L(\ell_0)$. By obstruction (iv) of Lemma \ref{LEMObs}, $\omega|_{ I_1}$ is not injective. Consequently, either $\omega|_{ I_0}$ is not injective, in which case $J_0\subset I_0$, or $\omega|_{ I_0}$ is injective and $J_0\subset I_1$. In summary, we have
\begin{eqnarray}\label{EQFirst1}
J_0 \subset I_0 \quad \mbox{or} \quad J_0\subset I_1.
\end{eqnarray}
The objective of the present section is to prove the following result.

\begin{proposition}\label{PROPfirstloop}
There is $c>0$ such that, by taking $\epsilon_0>0$ from Lemma \ref{PROPPrelLEM1} smaller if necessary, for any $\epsilon \in (0,\epsilon_0)$, the following holds:
\begin{eqnarray}
\omega_1(s)  \geq c \, \epsilon^{\bar{m}} \quad \mbox{and}  \quad \omega_2(s)  \geq c \, \epsilon \qquad \forall s \in \left[s_0^-,L(\omega)\right].
\end{eqnarray}
\end{proposition}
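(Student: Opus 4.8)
The plan is to reduce the entire statement to the single lower bound
\[
\omega_2(s_0^-) \geq K\epsilon
\]
for some constant $K>0$ independent of $\epsilon$. Indeed, once this is known, Lemma~\ref{PROPPrelLEM2} applied with $t=s_0^-$ yields at once $\omega_1(s)\geq C(K)\epsilon^{\bar m}$ and $\omega_2(s)\geq C(K)\epsilon$ for every $s\in[s_0^-,L(\omega)]$, which is precisely the claim with $c:=\min\{C(K),K/2\}$. Thus the whole difficulty is concentrated in showing that the first loop forms at height at least of order $\epsilon$, and I would establish this by contradiction, assuming that along a sequence $\epsilon\to 0$ one has $y_0:=\omega_2(s_0^-)<K\epsilon$ for $K$ to be chosen small.

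Two preliminary reductions organize the picture. Since $L_0:=L(\ell_0)\leq C\beta^{1-1/m}=o(\epsilon^{\bar m})$ by Lemma~\ref{PROPPrelLEM1}(viii) and $\omega_2$ is $1$-Lipschitz, the height $\omega_2$ stays within $o(\epsilon^{\bar m})$ of $y_0$ all along $J_0$, while $\omega_1\leq 2\epsilon^{\bar m}$ there by Lemma~\ref{PROPPrelLEM1}(iv); so $\ell_0$ is confined to a tiny box around height $y_0$. Next I would split according to \eqref{EQFirst1}. If $J_0\subset I_1$ the loop lies in $\{P<0,\,x_1>0\}=\{0<x_1<x_2^{\bar m}\}$, where the identity $x_2^m=x_1^2+|P|\geq x_1^2$ holds; hence a low loop ($y_0$ small) has $\omega_1\lesssim y_0^{\bar m}$ on $J_0$, and a lower bound $\omega_1(s_0^-)\geq c\epsilon^{\bar m}$ would conversely force $y_0\geq (c)^{2/m}\epsilon$. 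This is where the curvature law $\dot\theta=\lambda Q(\omega)=4\lambda\omega_1 P(\omega)$ and the Gauss--Bonnet loop description of Lemma~\ref{LEMAll0}(ii) (strict convexity of $\mathcal{E}(\ell_0)$, $\int_{J_0}\sigma\dot\theta\in(\pi,2\pi)$) enter.

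In the case $J_0\subset I_1$ the contradiction comes out cleanly: because $\omega_1\lesssim y_0^{\bar m}$ on the loop, one gets $\max_{J_0}|Q|=4\max_{J_0}(\omega_1|P|)\lesssim y_0^{\bar m}\beta$, which is $\ll\epsilon^{\bar m}\beta$ when $y_0\ll\epsilon$. On the other hand, $\omega$ must still climb to $A_\epsilon=(\epsilon^{\bar m},\epsilon)$, so it possesses a point $t^*$ with $\omega_2(t^*)\geq K\epsilon$, and by Lemma~\ref{PROPPrelLEM2} such a point satisfies $\omega_1(t^*)\geq C(K)\epsilon^{\bar m}$, whence $|Q(\omega(t^*))|\gtrsim\epsilon^{\bar m}|P(\omega(t^*))|$ exceeds $\max_{J_0}|Q|$ once $|P(\omega(t^*))|$ is not too small. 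Feeding $\ell_0$ and this high point into obstruction~(vii) of Lemma~\ref{LEMObs} (whose hypothesis $\omega_2(t^*)\geq K\epsilon$ is exactly what we have, with $Q(\omega(t^*))<0$ in the region $P<0$) then yields the contradiction, forcing $y_0\geq K\epsilon$.

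The main obstacle is the complementary case $J_0\subset I_0$, where the loop lies in $\{P>0,\,x_1>0\}$. Here the gain used above disappears: $\omega_1$ on the loop may be as large as $\sim\epsilon^{\bar m}$, so $\max_{J_0}|Q|$ is of the same order $\epsilon^{\bar m}\beta$ as $|Q|$ at a high point, and the crude area comparison is too tight to bite. To handle it I would exploit the strict convexity and orientation of $\mathcal{E}(\ell_0)$ from Lemma~\ref{LEMAll0}(ii) together with the isoperimetric inequality Lemma~\ref{LEMStokesIso}(ii) and the translation/reflection obstructions (iii) and (v) of Lemma~\ref{LEMObs}, so as to show that a small convex loop sitting at height $o(\epsilon)$ inside $\{P>0\}$ cannot coexist with the remainder of $\omega$ reaching height $\epsilon$. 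Making this confrontation quantitative — in particular guaranteeing a high point carrying $|P|$ comparable to $\beta$ (equivalently, that $\beta$ is not already exhausted on the low first loop) and keeping every $\epsilon$-exponent consistent via $\beta=o(\epsilon^m)$, $L_0=o(\epsilon^{\bar m})$, the turning bound $\int_{J_0}|\dot\theta|>\pi$, and $m\geq 5$, $\bar m=m/2$ — is the delicate step on which the whole estimate, and hence Proposition~\ref{PROPfirstloop}, ultimately rests.
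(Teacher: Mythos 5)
Your endgame coincides with the paper's: once $\omega_2(s_0^-)\geq K\epsilon$ is known, Lemma \ref{PROPPrelLEM2} finishes the proof exactly as you say (the paper gets $\omega_2(t_0)\geq c\epsilon$, uses $L_0=o(\epsilon^{\bar{m}})$ and $1$-Lipschitzness to pass to $s_0^-$, then applies Lemma \ref{PROPPrelLEM2}). But the heart of the proposition is not proved in your proposal, and the one case you argue in detail hinges on a step that would fail. In the case $J_0\subset I_1$, to invoke obstruction (v) or (vii) of Lemma \ref{LEMObs} you need a time $t^*$ off the loop with $\omega_2(t^*)\geq K\epsilon$ \emph{and} $|Q(\omega(t^*))|\geq \max_{J_0}|Q|$, which, given your estimate $\max_{J_0}|Q|\lesssim y_0^{\bar{m}}\beta_0$, amounts to $|P(\omega(t^*))|\gtrsim (y_0/\epsilon)^{\bar{m}}\beta_0$. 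No available lemma produces such a point: $P$ vanishes at the endpoint $A_\epsilon$, and nothing prevents the entire high portion of $\omega$ from hugging $\{P=0\}$, where $Q$ is arbitrarily small --- this is precisely what a minimizer is expected to do, and it is what the paper ultimately establishes for the actual minimizer (Proposition \ref{PROPMain} (v) and (vii): $|P|$ attains its maximum $\beta$ on the loop, and off-loop local maxima of $P$ are of strictly smaller order). You flag this hypothesis yourself (``once $|P(\omega(t^*))|$ is not too small'') but never discharge it; that is a genuine gap, not a technicality. The complementary case $J_0\subset I_0$ is not argued at all --- you explicitly defer it as ``the delicate step'' --- so in neither case does the proposal yield the inequality $\omega_2(s_0^-)\geq K\epsilon$.

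The missing idea is the paper's replacement of ``a large-$|Q|$ point on $\omega$'' by a manufactured closed competitor. In Lemma \ref{PROPFirstloopLEM1}, a circle of length $L_0/2$, attached to $A_\epsilon$ by a segment of length $L_0/4$ and placed inside the region $\{Q\geq 2\epsilon^m L_0\}$, is compared to $\ell_0$ via obstruction (i) of Lemma \ref{LEMObs}; this converts a lower bound $L_0\geq K\delta_0$, where $\delta_0:=\beta_0/\max\{x_0,|y_0|^{\bar{m}}\}$, into $y_0\geq c(K)\epsilon$, \emph{uniformly} in the two cases of (\ref{EQFirst1}), so no separate geometric confrontation for $J_0\subset I_0$ is needed at that stage. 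The real work then goes into the loop-length lower bound $L_0\geq c\delta_0$ (Lemma \ref{LEM34janvier}), which rests on $x_0\geq c\beta_0^{1/2}$ and $y_0\geq-\beta_0^{1/2}$ (Lemma \ref{PROPFirstloopLEM2}, whose hardest case is in fact $J_0\subset I_1$, treated with cones, translations/rotations, and an analyticity argument) together with the a priori bound $|\lambda|\beta_0^2\leq C$ (Lemma \ref{LEM3janvier}), combined through the turning bound $\int_{J_0}|\dot{\theta}|\geq\pi$ of Lemma \ref{LEMAll0} (ii) and the curvature law $\dot{\theta}=4\lambda\,\omega_1 P(\omega)$ from (\ref{SYSomega}). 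None of these quantitative ingredients --- the normalized quantity $\delta_0$, the loop-length lower bound, or the bound on $|\lambda|\beta_0^2$ --- appears in your proposal, which is why both of your cases remain open.
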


The proof of Proposition \ref{PROPfirstloop} will follow from several lemmas. Before starting, we define $\beta_0$, $t_0\in I_0$ and $x_0>0$, $y_0\in \R$, and $\delta_0>0$ by
\[
\beta_0 := \max_{t\in J_0} \left|P(\omega(t))\right|= \left| P(\omega\left(t_0\right))\right| >0, \quad x_0:= \omega_1(t_0),\quad y_0:= \omega_2(t_0), \quad \delta_0:= \frac{\beta_0}{\max \left\{x_0,\left|y_0\right|^{\bar{m}}\right\}}.
\]
The first lemma is the following.

\begin{lemma}\label{PROPFirstloopLEM1}
For every $K > 0$, there are $\epsilon_0(K), c(K) >0$ such that there holds
\[
L_0 \geq K\delta_0 \quad \Longrightarrow \quad y_0 \geq c(K)\, \epsilon.
\]
\end{lemma}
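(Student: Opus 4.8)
The plan is to prove the contrapositive: I fix $K>0$ and show that if $y_0$ is smaller than a suitable constant multiple of $\epsilon$, then necessarily $L_0<K\delta_0$. I would treat the two possibilities of \eqref{EQFirst1} ($J_0\subset I_0$ and $J_0\subset I_1$) separately, i.e. the two signs $\sigma_0=\pm1$ of $P\circ\omega$ on $J_0$. The starting point is to read $\delta_0$ geometrically: writing $P=(x_1-x_2^{\bar m})(x_1+x_2^{\bar m})$ for $x_2\ge0$ gives $\tfrac12\delta_0\le|x_0-y_0^{\bar m}|\le\delta_0$, so that $\delta_0$ is comparable to the horizontal distance from the extremal point $\omega(t_0)=(x_0,y_0)$ to the set $\{P=0\}$, equivalently to the width, at height $y_0$, of the curved strip $\mathcal R:=\{0\le\sigma_0 P\le\beta_0,\ x_1\ge0\}$ containing the loop. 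Since $\nabla P=(2x_1,-mx_2^{m-1})$ has no zero on $\{x_1>0\}$, the function $P$ has no interior extremum there, so the bound $|P|\le\beta_0$ valid on $\mathrm{spt}(\ell_0)$ propagates to all of $\overline{\mathcal E(\ell_0)}$; in particular $\mathcal E(\ell_0)\subset\mathcal R$.

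The core of the argument is a quantitative control of the shape of $\ell_0$. By Lemma~\ref{LEMAll0}~(ii) the enclosed set $\mathcal E(\ell_0)$ is strictly convex, so $L_0$ is at most the perimeter of its bounding box, $L_0\le 2(H+V)$, where $H$ and $V$ are its horizontal and vertical extents. Because $\mathcal E(\ell_0)\subset\mathcal R$ and the width profile $w(x_2)$ of $\mathcal R$ is maximal ($\asymp\beta_0^{1/2}$) around $x_2\asymp\beta_0^{1/m}$ and monotone away from it, the horizontal extent is controlled by $w$ along the height range of $\ell_0$, while the a priori caps $\beta_0\le\beta\le C\epsilon^{3\bar m-1}$ and $L_0\le C\beta^{1-1/m}=o(\epsilon^{\bar m})$ from Lemma~\ref{PROPPrelLEM1}~(v),(viii) confine the whole loop to a tiny neighbourhood of $\{P=0\}$. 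I would then combine the box bound with the Stokes identity $A(\ell_0)=\int_{J_0}P^2\sin\theta\,dt$ and the isoperimetric inequality of Lemma~\ref{LEMStokesIso}~(ii), together with $\sup_{\mathcal E(\ell_0)}|Q|\le4\,x_1^{\max}\beta_0$, to turn the hypothesis $L_0\ge K\delta_0$ into the statement that the loop cannot acquire its length horizontally: it must be an elongated convex sliver of width $\asymp\delta_0$ whose vertical extent satisfies $V\gtrsim L_0\ge K\delta_0$.

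It remains to convert this vertical elongation into the lower bound $y_0\ge c(K)\epsilon$, and this is where I expect the real difficulty to lie. The obstruction is that length is cheap near $\{P=0\}$: there the extremal curvature $|\dot\theta|=4|\lambda|\,\omega_1|P|$ (from \eqref{SYSomega}) is small, so a loop can be long while hugging $\{P=0\}$, and pure metric fitting inside $\mathcal R$ does not by itself localize the loop in height. To overcome this I would exploit the turning budget $\int_{J_0}|\dot\theta|\,dt\in(\pi,2\pi)$ of Lemma~\ref{LEMAll0}~(ii): to close up, the sliver must perform its turning at its two caps, where the radius of curvature is $\asymp\delta_0$, forcing $4|\lambda|\,\omega_1|P|\asymp\delta_0^{-1}$ there; comparing with the value $4|\lambda|\max\{x_0,y_0^{\bar m}\}\,\beta_0$ dictated at $t_0$ by the definition of $\delta_0$, the unknown $\lambda$ cancels and leaves a purely geometric inequality between $y_0$, the cap heights, and the local slope $\bar m\,x_2^{\bar m-1}$ of $\{P=0\}$. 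Since the curvature of $\{P=0\}$, which the sliver must essentially follow along its long side, grows like $x_2^{\bar m-2}$ with height, a loop as long as $K\delta_0$ cannot be sustained at heights $o(\epsilon)$; feeding in $V\le\omega_2^{\max}\le2\epsilon$ from Lemma~\ref{PROPPrelLEM1}~(iv) then yields $y_0\ge c(K)\epsilon$, completing the proof. The delicate point throughout is precisely this elimination of $\lambda$ through the turning identity and the matching of curvatures, which is what ultimately produces the scale $\epsilon$.
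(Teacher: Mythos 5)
Your proposal takes a genuinely different route from the paper, and the step you yourself flag as the crux is a genuine gap which, I believe, cannot be filled within your framework. Everything you use is local: the ODE (\ref{SYSomega}), the confinement of the convex set $\mathcal{E}(\ell_0)$ in the strip $\{0\le\sigma_0 P\le\beta_0,\ x_1\ge0\}$, the turning budget of Lemma \ref{LEMAll0} (ii), and the a priori bounds of Lemma \ref{PROPPrelLEM1}. No combination of these can produce the scale $\epsilon$ in the conclusion $y_0\ge c(K)\epsilon$. First, your closing mechanism points the wrong way: the curvature of $\{P=0\}$ at height $x_2$ is $\asymp\bar m(\bar m-1)x_2^{\bar m-2}$ with $\bar m-2>0$, so it is \emph{smaller} at lower heights; the strip is straightest near the origin, and a convex set of width $\asymp\delta_0$ inside a strip following a curve of curvature radius $R$ is limited in extent along the strip only by $\asymp\sqrt{\delta_0R}$, with $R\to\infty$ as the height tends to $0$. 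Long thin loops are therefore \emph{easier}, not harder, to accommodate at heights $o(\epsilon)$, and an upper bound $V\le2\epsilon$ can never produce a lower bound on $y_0$. Second, the proposed elimination of $\lambda$ is illusory: at $t_0$ the ODE gives $|\dot\theta(t_0)|=4|\lambda|x_0\beta_0$, but at this stage of the argument nothing bounds this quantity from above (the bound $|\lambda|\beta_0^2\le C$ of Lemma \ref{LEM3janvier} comes later and depends on the present lemma), so comparing it with a cap-curvature lower bound yields at best an inequality of the type $|\lambda|\beta_0^2\gtrsim1$, which involves neither $y_0$ nor $\epsilon$. Third, and decisively, the lemma must hold for \emph{every} $K>0$: it is later invoked with the possibly small universal constant produced by Lemma \ref{LEM34janvier}. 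For small $K$ the hypothesis $L_0\ge K\delta_0$ is already satisfied by a near-circular loop of diameter $\asymp\delta_0$, so your elongated-sliver picture carries no information in exactly the regime where the lemma is used, yet the conclusion $y_0\ge c(K)\epsilon$ must still be established.

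What is missing is any use of the minimality of $\omega$ through an exchange argument, and that is precisely how the paper proves the lemma and how $\epsilon$ enters. By obstruction (i) of Lemma \ref{LEMObs}, optimality forbids a closed curve $\eta$ meeting $\omega|_{[0,s_0^-]}*\omega|_{[s_0^+,L(\omega)]}$ with $L(\eta)\le L_0$ and $|A(\ell_0)|\le|A(\eta)|$. The paper builds $\eta$ as a circle of length $L_0/2$ tied by a segment to $A_\epsilon=(\epsilon^{\bar m},\epsilon)$, placed in the region where $Q\ge Q(p)\ge2\epsilon^mL_0$ with $p=(\epsilon^{\bar m}+L_0/4,\epsilon)$, so that $|A(\eta)|\ge\epsilon^mL_0^3/(8\pi)$; on the other side, the isoperimetric inequality of Lemma \ref{LEMStokesIso} (ii) gives $|A(\ell_0)|\le(x_0+L_0)\beta_0L_0^2/\pi$ (this upper bound is essentially the one ingredient your sketch shares with the paper). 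Optimality then forces, under $L_0\ge K\delta_0$, the inequality $(x_0+L_0)\max\{x_0,|y_0|^{\bar m}\}\ge(K/8)\epsilon^m$, and $y_0\gtrsim\epsilon$ follows using $L_0=o(\epsilon^{\bar m})$ and $\beta_0\le\beta=o(\epsilon^m)$. In short, the loop sits at height $\asymp\epsilon$ not because geometry forbids it elsewhere, but because at lower heights $|Q|\lesssim x_0\beta_0$ is so small that the loop would be a less efficient generator of weighted area than a balloon attached near $A_\epsilon$, contradicting minimality. Any correct proof needs a comparison step of this type; your proposal has none.
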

\begin{proof}
Let $K>0$ be fixed. Consider the point $p:=(\epsilon^{\bar{m}}+L_0/4,\epsilon)$. We note that the length of the segment $[\omega(\epsilon),p]$ is equal to $L_0/4$ and
\[
Q(p) = 4 \left( \epsilon^{\bar{m}}+\frac{L_0}{4}\right) \left(\left( \epsilon^{\bar{m}}+\frac{L_0}{4}\right)^2 -\epsilon^m\right)  = 2L_0 \left( \epsilon^{\bar{m}}+\frac{L_0}{4}\right) \left( \epsilon^{\bar{m}}+\frac{L_0}{8}\right) \geq 2 \epsilon^m L_0.
\]
From the study of the level set $\{Q=Q(p),\, x_1\geq 0\}$ conducted in the proof of assertion (v) of Lemma \ref{LEMObs},  there exists a disc $D\subset \R^2$ such that $Q(x) \geq Q(p)$ for all $x \in D$, and whose boundary is a circle $\nu$ such that $\nu(0)=p$ and $L(\nu)=L_0/2$. Let $\eta$ be the curve given by the concatenation of $[\bar{\omega}(\epsilon),p]$, $\nu$, and $[p,\bar{\omega}(\epsilon)]$. By construction $L(\eta)= L_0$, so by Lemma  \ref{LEMObs} (i), we have necessarily, since $\omega$ is optimal,
\begin{eqnarray}\label{12dec1}
|A(\ell_0)| > |A(\eta)|.
\end{eqnarray}
Assume that $L_0 \geq K\delta_0$. We estimate $|A(\ell_0)|$ from above and  $|A(\eta)|$ from below. By Lemma \ref{LEMStokesIso} (ii), we have
\[
|A(\ell_0)| \leq  \frac{1}{\pi} \sup_{x\in \mathcal{E}(\ell_0)} |x_1P(x)| \, L_0^2 \leq \frac{\beta_0 L_0^2}{\pi}  \sup_{x\in \mathcal{E}(\ell_0)} x_1 \leq \frac{\left(x_0+L_0\right)\beta_0L_0^2}{\pi},
\]
where we used that $\sup_{x_1\in \mathcal{E}(\ell_0)}(x_1)=\max_{t\in I_0} \omega_1(t) \leq x_0 + L_0$, since $|\dot{\omega}_1|\leq 1$. Moreover, using the above lower bound for $Q(p)$, $\mathcal{L}(D)=L_0^2/(16\pi)$, and the assumption, we have
\[
|A(\eta)| = | \mathcal{L}_Q\left(\mathcal{E}(\eta)\right)| \geq \mathcal{L}(\mathcal{E}(\eta)) \, Q(p) \geq \frac{ \epsilon^m L_0^3}{8\pi}  \geq \frac{ \epsilon^m L_0^2K\delta_0}{8\pi} = \frac{ \epsilon^m L_0^2K\beta_0}{8\pi \max \left\{x_0,\left|y_0|^{\bar{m}}\right|\right\}}.
\]
Plugging the bounds on  $|A(\ell_0)|$ and  $|A(\eta)|$ in (\ref{12dec1}), we obtain
\[
(x_0+L_0) \max \left\{x_0+L_0,|y_0|^{\bar{m}}\right\} \geq (x_0+L_0) \max \left\{x_0,|y_0|^{\bar{m}}\right\} \geq \frac{K}{8} \, \epsilon^m.
\]
If $x_0+L_0 \leq |y_0|^{\bar{m}}$, the above inequality implies $|y_0^m|\geq K \epsilon^m/8$, which proves the result. Alternatively, if $x_0+L_0>|y_0|^{\bar{m}}$,  the inequality gives $x_0+L_0 \geq c'  \epsilon^{\bar{m}}$ with $c':=\sqrt{K/8}$. By Lemma \ref{PROPPrelLEM1} (viii) and (v), we know that $L_0 \leq C\beta^{1-\frac{1}{m}}\leq C^2 \epsilon^{(3\bar{m}-1)(1-\frac{1}{m})}$, where $(3\bar{m}-1)(1-\frac{1}{m})>\bar{m}$ (because $m\geq 5$). Thus, for sufficiently small $\epsilon>0$, we have $x_0\geq c'\epsilon^{\bar{m}}/2$. Since $\beta_0\leq \beta=o(\epsilon^m)$ (by Lemma \ref{PROPPrelLEM1} (v)), it follows that
\[
y_0^m = x_0^2 - \beta_0 \geq \frac{c'}{2} \, \epsilon^m + o \left(\epsilon^m\right),
\]
which concludes the proof.
\end{proof}

Our objective is now to demonstrate that the assumption of  Lemma \ref{PROPFirstloopLEM1} holds for some constant $K>0$ depending only on $m$. To this end, we begin with the following preparatory lemma.

\begin{lemma}\label{PROPFirstloopLEM2}
There is $c>0$ such that, by taking $\epsilon_0>0$ from Lemma \ref{PROPPrelLEM1} smaller if necessary, for any $\epsilon \in (0,\epsilon_0)$, the following holds:
\begin{eqnarray}\label{3janv1}
x_0 \geq c \, \beta_0^{\frac{1}{2}} \quad \mbox{and} \quad y_0 \geq - \beta_0^{\frac{1}{2}}.
\end{eqnarray}
\end{lemma}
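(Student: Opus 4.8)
The plan is to first determine the sign of $P$ along the first loop, then peel off an algebraic part from a genuinely geometric part.

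\emph{Locating the loop.} By \eqref{EQFirst1} either $J_0\subset I_0$ or $J_0\subset I_1$. Since $0\in\mathcal{I}^+$ by Lemma~\ref{LEMAll0}(iii) and, when $1\in\mathcal{I}$, Lemma~\ref{LEMAll0}(iv) forces $1\in\mathcal{I}^-$, the sign $\sigma_0$ of $P\circ\omega$ on $J_0$ equals $+1$ in the first case and $-1$ in the second. Writing $P(\omega(t_0))=\sigma_0\beta_0$ and recalling $P=x_1^2-x_2^m$ gives the identity $x_0^2-y_0^m=\sigma_0\beta_0$. In particular, if $\sigma_0=-1$ then $y_0^m=x_0^2+\beta_0>0$, so $y_0>0$ and $y_0\geq-\beta_0^{1/2}$ is automatic; if $\sigma_0=+1$ then $x_0^2=y_0^m+\beta_0$.

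\emph{Algebraic reduction.} In the case $\sigma_0=+1$, once $y_0\geq-\beta_0^{1/2}$ is known the bound on $x_0$ is free: if $y_0\geq0$ then $x_0^2\geq\beta_0$, while if $-\beta_0^{1/2}\leq y_0<0$ then $|y_0|^m\leq\beta_0^{m/2}=o(\beta_0)$ (as $m\geq5$), so $x_0^2=\beta_0-|y_0|^m\geq\beta_0/2$ for $\epsilon$ small. Thus the statement reduces to the two claims $y_0\geq-\beta_0^{1/2}$ when $\sigma_0=+1$, and $x_0\geq c\beta_0^{1/2}$ when $\sigma_0=-1$.

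\emph{Geometric core.} In both remaining claims $\omega(t_0)$ is the $P$-extremal point of $\ell_0$, so the loop is tangent there to the level curve $\{P=\sigma_0\beta_0\}$ and lies on one side of it; its signed curvature keeps the constant sign prescribed by Lemma~\ref{LEMAll0}(i), and by Lemma~\ref{LEMAll0}(ii) its total turning satisfies $\sigma\int_{J_0}\dot\theta\,dt\in(\pi,2\pi)$, where $\sigma=\sigma_0\,\mathrm{sgn}(\lambda)$. Using \eqref{SYSomega} this turning rewrites as $\int_{J_0}4|\lambda|\,\omega_1\,|P|\,dt\in(\pi,2\pi)$. I would combine this with the isoperimetric estimate of Lemma~\ref{LEMStokesIso}(ii), the length bound $L_0\leq C\beta^{1-1/m}$ of Lemma~\ref{PROPPrelLEM1}(viii), and the shape of the band $\{|P|\leq\beta_0,\,x_1\geq0\}$, which near the origin has $x_1$-width $O(\beta_0^{1/2})$: a $P$-extremal point with $x_0\ll\beta_0^{1/2}$ (equivalently, for $\sigma_0=+1$, $y_0<-\beta_0^{1/2}$) would force $\ell_0$ into a neighborhood of $\{P=0\}$ too thin to carry the weighted area demanded by its turning, contradicting an obstruction of Lemma~\ref{LEMObs} — most directly (i), via comparison of $A(\ell_0)$ with an inscribed disc as in the proof of Lemma~\ref{PROPFirstloopLEM1}.

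\emph{Main obstacle.} The entire difficulty sits in this last step: the naive algebra yields only $|y_0|\leq\beta_0^{1/m}$, which is weaker than the required $\beta_0^{1/2}$, so the improvement must come from a quantitative geometric comparison rather than from the identity $x_0^2-y_0^m=\sigma_0\beta_0$ alone. I expect the most delicate case to be $\sigma_0=+1$ with $y_0<0$, where $\ell_0$ dips into the fourth quadrant $\{x_1>0,\,x_2<0\}$ — a region on which $P>0$ automatically and on which the reflection obstruction (iv) of Lemma~\ref{LEMObs} gives no leverage — so that the lower bound on $x_0$ must be extracted purely from the turning/isoperimetric balance.
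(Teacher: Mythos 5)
Your case split and algebraic reduction are exactly right and coincide with the paper's: by \eqref{EQFirst1} either $J_0\subset I_0$ ($\sigma_0=+1$), where $x_0^2=\beta_0+y_0^m$ makes the bound on $x_0$ follow from $y_0\geq -\beta_0^{1/2}$, or $J_0\subset I_1$ ($\sigma_0=-1$), where $y_0>0$ is automatic and only $x_0\geq c\beta_0^{1/2}$ is at stake. But the ``geometric core'', which you yourself flag as the entire difficulty, is left as a heuristic, and the mechanism you propose cannot be made to work. First, the turning identity $\int_{J_0}4|\lambda|\,\omega_1|P\circ\omega|\,dt\in(\pi,2\pi)$ involves the multiplier $|\lambda|$, for which no upper bound is available at this stage: the estimate $|\lambda|\beta_0^2\leq C$ is Lemma~\ref{LEM3janvier}, which is proved \emph{after} and \emph{using} Lemma~\ref{PROPFirstloopLEM2}, so any appeal to control on $|\lambda|$ here is circular. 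A loop of arbitrarily small length $L_0$ hugging the level set $\{P=\sigma_0\beta_0\}$ satisfies the turning constraint simply by having $|\lambda|$ large; the turning condition therefore ``demands'' no area whatsoever. Second, the area comparison you invoke (obstruction (i) of Lemma~\ref{LEMObs} against a disc placed where $Q$ is large, as in Lemma~\ref{PROPFirstloopLEM1}) combined with the isoperimetric bound only yields $8(x_0+L_0)\beta_0>\epsilon^m L_0$, i.e.\ an \emph{upper} bound $L_0\lesssim x_0\beta_0/\epsilon^m$ (since $\beta_0=o(\epsilon^m)$). This is perfectly consistent with $x_0\ll\beta_0^{1/2}$ and a tiny loop, so no contradiction follows. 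What is actually needed is a \emph{lower} bound $L_0\gtrsim\delta_0$, so that Lemma~\ref{PROPFirstloopLEM1} forces $y_0\gtrsim\epsilon$, contradicting $y_0\leq(x_0^2+\beta_0)^{1/m}\lesssim\epsilon^{3/2-1/m}=o(\epsilon)$; producing that lower bound is where all the work lies.

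Concretely, for $J_0\subset I_1$ the paper first proves (Step 2 of its argument) that $\mbox{spt}(\ell_0)$ cannot be contained in the wedge $U_c=\{0<x_2<3\epsilon,\ 0<x_1<cx_2^{\bar m}\}$; this requires the gradient estimate on $Q$, translation and rotation competitors, Lemma~\ref{LEM2025}, Lemma~\ref{LEMGaussBonnet}, and an induction producing infinitely many loops to contradict analyticity. Only then does the scaling estimate $\mbox{dist}(\Upsilon,\Xi_{\beta_0})\geq\mu\beta_0^{1/2}$ convert the assumption $x_0<\beta_0^{1/2}/2$ into $L_0\geq\mu\beta_0^{1/2}\geq\mu\delta_0$. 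Similarly, for $J_0\subset I_0$ with $y_0<-\beta_0^{1/2}$: if the loop reaches $\{x_2\geq0\}$ one gets $L_0\geq2\beta_0^{1/2}\geq\sqrt{2}\,\delta_0$ directly, but in the delicate sub-case where the whole loop stays in $\{x_2<0\}$ (the one you single out), obstruction (i) applied to the loop fails for the same tiny-loop reason; the paper instead applies obstruction (ii) to the entire excursion $\omega|_{[s_1,s_2]}$ below the axis, comparing its length with that of its chord and using obstruction (v) to bound $Q$ along it --- a mechanism absent from your sketch. So the skeleton is correct and matches the paper, but the two hard claims remain unproven, and the specific route you indicate (turning plus isoperimetry plus obstruction (i)) cannot supply them.
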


 \begin{proof}
We consider separately the cases where $J_0\subset I_0$ and  $J_0\subset I_1$ (see (\ref{EQFirst1})).  \\

\noindent Case $J_0\subset I_0$: Since $\beta_0=P(\omega(t_0))>0$ in this case, we have $x_0^2=\beta_0+y_0^m$, which shows that the inequality for $x_0$ in (\ref{3janv1}) follows from the inequality for $y_0$, with $c=1/2$ and for sufficiently small $\epsilon>0$. To prove that $y_0 \geq -\beta_0^{1/2}$, we proceed by contradiction and assume $y_0 < -\beta_0^{1/2}$. If $\omega_2(t)\geq 0$ for some $t\in J_0$ then $L_0 \geq 2|y_0| \geq 2 \beta_0^{1/2}$ and thus (using that $\beta_0=x_0^2-y_0^m= x_0^2+|y_0|^m \leq 2 \max \{x_0^2,|y_0|^m\}$)
\[
L_0 \geq 2 \beta_0^{\frac{1}{2}} = 2 \beta_0 \beta_0^{-\frac{1}{2}} \geq \sqrt{2} \frac{|P(\omega(t_0))|}{\max \{x_0,|y_0|^{\bar{m}}\}} = \sqrt{2} \, \delta_0,
\]
which, by Lemma \ref{PROPFirstloopLEM1}, yields $y_0\geq c(\sqrt{2})\epsilon$, a contradiction. Therefore, we must have $\omega_2(t)<0$ for all $t\in J_0$. This implies $\omega_1(t)<P(\omega(t))^{1/2}\leq \beta_0^{1/2}$ for all $t\in J_0$. Let $s_1, s_2 \in I_{\omega}$ be the maximal interval containing $t_0$ such that $\omega_2(t)<0$ for all $t\in (s_1,s_2)$.  By obstruction (v) of Lemma \ref{LEMObs}, we infer that
\[
\max_{t\in J_0}Q(\omega(t))  = \max_{t\in J_0}|Q(\omega(t))| > \max_{t\in I_{\omega} \setminus J_0}|Q(\omega(t))| \geq  \max_{t\in (s_1,s_2) \setminus J_0} Q(\omega(t)),
\]
which implies
\begin{eqnarray}\label{EQFirst3}
Q\left(\omega \left(s_1\right)\right), Q\left(\omega\left(s_2\right)\right) \leq \max_{t\in (s_1,s_2) } Q(\omega(t)) = \max_{t\in J_0} Q(\omega(t)) \leq 4 \beta_0 \, \max_{t\in J_0} \omega_1(t) \leq 4 \beta_0^{\frac{3}{2}}.
\end{eqnarray}
Consequently, we have $\omega_1(s_1), \omega_1(s_2) \leq \beta_0^{1/2}$, and thus
\[
L\left(\left[\omega_1\left(s_1\right), \omega_1\left(s_2\right)\right]\right) = \left|\omega_1\left(s_1\right)- \omega_1\left(s_2\right)\right |\leq \beta_0^{1/2}.
\]
Since $L(\omega|_{ (s_1,s_2)})\geq 2|y_0|\geq 2 \beta_0^{1/2}$, it follows that
\begin{eqnarray}\label{3janv2}
\Lambda := \frac{ L \left(\omega|_{ (s_1,s_2)}\right) - L\left([\omega_1(s_1), \omega_1(s_2)]\right) }{4} \geq \frac{\beta_0^{\frac{1}{2}}}{4}.
\end{eqnarray}
Set $p=\bar{\omega}(\epsilon)+(\Lambda,0)$ and consider the curve $\eta$ formed by the concatenation of $[\bar{\omega}(\epsilon),p]$, $\partial D$, and $[p,\bar{\omega}(\epsilon)]$, where $D$ is a closed disc containing $p$ on its boundary such that $\min_{x\in D} Q(x) =Q(p)$ and $L(\partial D)=2\Lambda$. We will conclude by obstruction (ii) of Lemma \ref{LEMObs}. Let $\alpha:= \omega|_{ [s_1,s_2]} * [\omega(s_2),\omega(s_1)]$. Observe that
\[
L(\alpha) = 4 \Lambda + 2 L([\omega(s_2),\omega(s_1)]) \leq 4 \Lambda + 2 \beta_0^{\frac{1}{2}} \leq 12 \Lambda,
\]
where in the last inequality we used (\ref{3janv2}). By Lemma \ref{LEMStokesIso} (ii), and using (\ref{EQFirst3})), we deduce that
\[
|A(\alpha)| \leq \frac{L(\alpha)^2}{4\pi} \max_{x\in \mathcal{E}(\alpha)}|Q(x)| \leq 144 \Lambda^2  \max_{t\in (s_1,s_2)}|Q(\omega(t))| \leq  144 \Lambda^2 \beta_0^{\frac{3}{2}}.
\]
On the other hand, by construction, we have $|A(\eta)|\geq \Lambda^2Q(p)/\pi$, where
\[
Q(p) = 4 \left( \epsilon^{\bar{m}}+\Lambda\right) \left( \left( \epsilon^{\bar{m}}+\Lambda\right)^2 - \epsilon^m\right) = 4 \Lambda \left( \epsilon^{\bar{m}}+\Lambda\right)\left( 2\epsilon^{\bar{m}}+\Lambda\right) \geq 8 \Lambda \epsilon^m \geq 2 \epsilon^m \beta_0^{\frac{1}{2}}.
\]
This shows that $|A(\alpha)| \leq |A(\eta)|$ for sufficiently small $\epsilon>0$ (since $\beta_0\leq \beta =o(\epsilon^m)$ by Lemma \ref{PROPPrelLEM1} (v)). Lemma \ref{LEMObs} (iv) provides an obstruction, establishing that $y_0 \geq -\beta_0^{1/2}$.\\

\noindent Case $J_0 \subset I_1$: Since $m$ is odd and $P(\omega(t_0))<0$, it follows that $y_0>0\geq - \beta_0^{\frac{1}{2}}$. We now aim to prove that $x_0 \geq c\beta_0^{1/2}$ for some constant $c>0$, provided that $\epsilon>0$ is sufficiently small.  In the following, for any $v, w \in \R^2 \setminus \{0\}$, let $\mathfrak{C}^+[v,w]$ denote the closed positive cone defined as the convex hull of the two half-lines in the directions $v$ and $w$. Next, we define $c_0:=\sqrt{3}/3$ and, for $c \in \left(0,c_0\right)$,
\[
U_c := \Bigl\{ \left(x_1,x_2\right) \in \R^2 \, \vert \, 0<x_2 < 3\epsilon, \, 0 < x_1 <cx_2^{\bar{m}}\Bigr\}.
\]
The proof proceeds in three steps. We  set $u:=(1,1)$, $v:=(0,1)$, and $w:=(1,-1)$.

Step 1: We claim that for all $c\in (0,c_0)$, there exists $\epsilon_c>0$ such that for any $\epsilon\in (0,\epsilon_c)$,
\begin{eqnarray}\label{20dec1}
 \langle \nabla Q(x), z\rangle <0 \quad \forall x \in U_c, \, \forall z \in \mathfrak{C}^+[v,w] \setminus \{0\}.
\end{eqnarray}
To verify this, note that $\nabla Q(x)=4(3x_1^2-x_2^m,-mx_1x_2^{m-1})$. First, the inequality $\langle \nabla Q(x),v\rangle<0$ follows directly. Additionally, we have $\langle \nabla Q(x),w\rangle = 4(3x_1^2+mx_1x_2^{m-1}-x_2^m)$. For $x_1\in (0,cx_2^{\bar{m}})$, this yields $\langle \nabla Q(x),w\rangle \leq 4x_2^m(3c^2+cmx_2^{\bar{m}-1}-1)$, which is negative for $c<c_0$ and  sufficiently small $x_2>0$. We conclude by linearity.

Step 2: We claim that for all $c\in (0,c_0)$, there exists $\epsilon_c>0$ such that for any $\epsilon\in (0,\epsilon_c)$, $ \mbox{spt}(\ell_0) \not\subset U_c$.
To prove this, we proceed by contradiction, assuming that $\mbox{spt}(\ell_0) \subset U_c$ for some $c\in (0,c_0)$. Since $J_0\subset I_1$, obstruction (iv) of Lemma \ref{LEMObs}, together with Lemma \ref{PROPPrelLEM1} (i), ensures that $Q\circ \ell_0<0$. We then define the following affine cones:
\[
\mathfrak{C}_1 := \omega(s_0^-) + \mathfrak{C}^+[u,-w], \quad \mathfrak{C}_2 := \omega(s_0^-)+\mathfrak{C}^+[-v,-w], \quad \mathfrak{C}_3 := \omega(s_0^-)+ \mathfrak{C}^+[-v,u].
\]
Since the union of these cones is equal to $\R^2$, at least one of the following cases must hold:
\[
\mbox{spt}(\ell_0) \subset \mathfrak{C}_1, \quad \left(\mbox{spt}(\ell_0) \setminus \{\omega(s_0^-)\} \right) \cap \mathfrak{C}_2 \neq \emptyset  \quad \mbox{or} \quad
\left\{ \begin{array}{l}
\left(\mbox{spt}(\ell_0) \setminus \{\omega(s_0^-)\}\right) \cap \mathfrak{C}_2 = \emptyset\\
\left(\mbox{spt}(\ell_0) \setminus \{\omega(s_0^-)\}\right) \cap \mathfrak{C}_3 \neq \emptyset.
\end{array}
\right.
\]
Our goal is to derive a contradiction in each of these cases.

In the case where $(\mbox{spt}(\ell_0) \setminus \{\omega(s_0^-)\}) \cap \mathfrak{C}_2 \neq \emptyset$, choose $t\in (s_0^-,s_0^+)$ such that $\omega(t)\in \mathfrak{C}_2$. Define $T^r:\R^2 \rightarrow \R^2$ as the translation by the vector $r\bar{v}$, where $\bar{v}:=\omega(s_0^-)-\omega(t)$ and $r>0$, and set $\eta^r:=T^r\circ \ell_0$. By Lemma \ref{LEMAll0} (ii), the support of $\ell_0$ encloses a strictly convex set that is contained in the cone $\omega(s_0^-)+\mathfrak{C}^+[\dot{\omega}(s_0^-),-\dot{\omega}(s_0^+)]$. Thus, the vector $-\bar{v}$ lies the interior of the cone $\mathfrak{C}^+[\dot{\omega}(s_0^-),-\dot{\omega}(s_0^+)]$. This shows that $\eta^r$ intersects $\omega|_{ [0,s_{0}^-]} * \omega|_{ [s_{0}^{+},L(\omega)]}$ for $r>0$ sufficiently  small. Since $T^r$ is an isometry, we have $L(\eta^r)=L(\ell_0)$. Furthermore, by (\ref{20dec1}), $Q\circ \ell_0<0$,  and since $\bar{v}\in \mathfrak{C}^+[v,w]$ (because $\omega(t) \in \mathfrak{C}_2$), we have $|A(\eta^r)|>|A(\ell_0)|$  for $r>0$ sufficiently  small. This violates obstruction (i) in Lemma \ref{LEMObs}, leading to a contradiction.

In the case where $\mbox{spt}(\ell_0) \subset \mathfrak{C}_1$, consider the rotation $T_{\phi}:\R^2 \rightarrow \R^2$ with angle $\phi$ around the point $\omega(s_0^-)$, and define $\eta_{\phi}:=T_{\phi}\circ \ell_0$. By construction, since $T_{\phi}$ is an isometry fixing $\omega(s_0^-)$, the set $\mbox{spt}(\eta_{\phi})$ intersects the support of $\omega|_{ [0,s_{0}^-]} * \omega|_{ [s_{0}^{+},L(\omega)]}$, and $L(\eta_{\phi})\leq L(\ell_0)$. Next, recalling that $\mbox{spt}(\ell_0)$ encloses a convex set, for any $x\in\mathcal{E}(\ell_0)$, the vector $x-\omega(s_0^-)$ belongs to $\mathfrak{C}^+[-w,u]$. Therefore, we have
\[
-(x-\omega(s_{0}^-))^\perp \in \mathfrak{C}^+[-w,u] \subset \mathfrak{C}^+[v,w] \qquad \forall x \in \mathcal{E}(\ell_0),
\]
where $(v_1,v_2)^\perp=(-v_2,v_1)$. Thus, by (\ref{20dec1}), it follows that $ \langle \nabla Q(x), -(x-\omega(s_{0}^-)^\perp\rangle<0$ for all $x\in\mathcal{E}(\ell_0)$. Since $\frac{\partial T_{\phi}}{\partial \phi}(0,\cdot)$ is the rotation of angle $\pi/2$ at the origin, one then deduces that
\begin{eqnarray*}
\left. \frac{d}{d\phi}\right|_{\phi=0} \int \int _{\mathcal{E}(T_\phi\circ \ell_0)} Q(x)dx_1 dx_2  & = & \left. \frac{d}{d\phi}\right|_{\phi=0}  \int \int _{\mathcal{E}(T_\phi\circ \ell_0)} Q(T_\phi(x))dx_1 dx_2\\
		& = &  \int \int _{\mathcal{E}(T_\phi\circ \ell_0)}  \langle \nabla Q(x), (x-\omega(s_{\ell_0}^-))^\perp\rangle dx_1 dx_2 >0.
\end{eqnarray*}
This implies that $|A(\eta^r)|>|A(\ell_0)|$ for small $\phi< 0$. Obstruction (i) in Lemma \ref{LEMObs} leads again to a contradiction.

We now address the case where $(\mbox{spt}(\ell_0) \setminus \{\omega(s_0^-)\}) \cap \mathfrak{C}_2 = \emptyset$ and $(\mbox{spt}(\ell_0) \setminus \{\omega(s_0^-)\}) \cap \mathfrak{C}_3 \neq \emptyset$. Since $\dot{\omega}(\tau_1)$ points toward the set $\{P<0\}$ at $\omega(\tau_1)$, we must have $\theta(\tau_1)\in (\pi/4,3\pi/2)+2k\pi$ for some $k\in \Z$. Without loss of generality, we assume $k=0$. If $\lambda <0$, then, by (\ref{SYSomega}), $\theta$ is increasing on $I_1$, thus by Lemma \ref{LEMAll0} (viii), we have $\theta(s_{0}^-)\in (\dot{\theta}(\tau_1),3\pi/2)\subset (\pi/4,3\pi/2)$. Since the support of $\ell_0$ encloses a convex set contained within the cone $\omega(s_0^-)+\mathfrak{C}^+[\dot{\omega}(s_0^-),-\dot{\omega}(s_0^+)]$ with $\mbox{ang}(\dot{\omega}(s_{\ell}^+),\dot{\omega}(s_{\ell}^-))\in (0,\pi)$ and $(\mbox{spt}(\ell_0) \setminus \{\omega(s_0^-)\}) \cap \mathfrak{C}_2 = \emptyset$, we deduce that the set $\mbox{spt}(\ell_0) \setminus \{\omega(s_0^-)\}$ does not intersect the affine cone $\mathfrak{C}_3$, leading to a contradiction. Therefore,  since $\lambda \neq 0$ by Lemma \ref{PROPPrelLEM1} (vii), we must have $\lambda >0$. By (\ref{SYSomega}), $\theta$ is decreasing on $I_1$. We claim that there exists a sequence $(\ell_k)_{k\in \N}$ of simple loops of $\omega|_{ I_1}$, associated with a sequence of intervals $(J_k=[s_k^-,s_k^+])_{k\in \N}$, such that $(s_k)_{k\in \N}$ is increasing and the following properties hold for each $k\in \N$:
\begin{eqnarray}\label{RECk}
\left\{
\begin{array}{l}
(a) \, \, \exists t \in \mbox{int}\left(J_k\right) \mbox{ such that } \omega(t) \in \omega \left(s_k^-\right) + \mathfrak{C}^+[-v,u],\\
(b) \, \, \forall s \in \mbox{int}\left(J_k\right), \, \omega(s) \in U_c \setminus \left( \omega \left(s_k^-\right) + \mathfrak{C}^+[-v,-w]\right).
\end{array}
\right.
\end{eqnarray}
This claim contradicts the analyticity of $\omega$. The base case for $k=0$ is satisfied by $\ell_0$, given our assumptions: $\mbox{spt}(\ell_0) \subset U_c$, $(\mbox{spt}(\ell_0) \setminus \{\omega(s_0^-)\}) \cap \mathfrak{C}_2 = \emptyset$, and $(\mbox{spt}(\ell_0) \setminus \{\omega(s_0^-)\}) \cap \mathfrak{C}_3 \neq \emptyset$. Now assume that there exists a loop $\ell_k$ for some $k\in \N$ satisfying (\ref{RECk}). We start by proving that
\begin{eqnarray}\label{3janv33}
\theta\left(s_k^+\right) \in \left( \frac{\pi}{2},\frac{5\pi}{4}\right] \, (\mbox{\rm mod } 2\pi).
\end{eqnarray}
We prove (\ref{3janv33}) by contradiction. Without loss of generality, we assume that $\theta (s_k^+)$ lies in $(-\pi,\pi]$. We then consider separately the three cases where $\theta(s_k^+)\in (-\pi/4,\pi/2)$, $\theta(s_k^+)\in (-3\pi/4,-\pi/4]$, and $\theta(s_k^+)=\pi/2$.
\begin{itemize}
\item If  $\theta(s_k^+)\in (-\pi/4,\pi/2)$, then $\omega(s_k^+-s) \in \omega(s_k^-) + \mathfrak{C}^+[-v,-w]$ for sufficiently small $s>0$, which contradicts property  (b) in (\ref{RECk}).
\item If $\theta(s_k^+) \in (-3\pi/4,-\pi/4]$, then the set $\mbox{spt}(\ell_k)\setminus \{\omega(s_k^-)\}$ is contained within the interior of the affine cone $\omega(s_k^-)+  \mathfrak{C}^+[-w,-\dot{\omega}(s_k^+)]$. This follows from property (b) satisfied by $\ell_k$, the strict convexity of the region enclosed by its support (see Lemma \ref{LEMAll0} (ii)), and the fact that $\lambda >0$, which implies that $\ell_k$ is negatively oriented. Consequently, $\mbox{spt}(\ell_k)\setminus \{\omega(s_k^-)\}$ is entirely contained within the interior of the affine cone $\omega(s_k^-) + \mathfrak{C}^+[-w,u]$, contradicting property (a)  in (\ref{RECk}).
\item If $\theta(s_k^+)=\pi/2$, then since $\theta$ is monotone decreasing, a translation of $\mbox{spt}(\ell_k)$ in the direction $(0,1)$ intersects the support of $\omega|_{ (s_k^+,\tau_2]}$. This contradicts obstruction (ii) in Lemma \ref{LEMObs}.
\end{itemize}
Therefore, the proof of (\ref{3janv33}) is complete, and as a consequence, we have
\[
\omega\left( s_k^+ + s\right) \in  \omega\left( s_k^+\right) + \mathfrak{C}^+\left[ \dot{\omega}\left(s_k^+\right), v\right] \subset U_c \qquad \forall s>0 \mbox{ small}.
\]
By obstruction (iii) of Lemma \ref{LEMObs}, we have $\mbox{spt}(\omega_{(s_k^+,\tau_2]})\cap \{\omega(s_k^+)+t(0,1)\, \vert \, t\geq 0\}=\emptyset$. Consequently, since $\omega(\tau_2) \notin \omega (s_k^k) + \mathfrak{C}^+[\dot{\omega}(s_k^+), v]$, the intersection $\mbox{spt}(\omega_{(s_k^+,\tau_2]})\cap \{\omega(s_k^+)+t\dot{\omega}(s_k^+)\, \vert \, t\geq 0\}$ must be nonempty. By Lemma \ref{LEMGaussBonnet}, it follows that $\omega_{(s_k^+,\tau_2]}$ contains a loop within $U_c$. We define $\ell_{k+1}$ as the first simple loop of $\omega_{(s_k^+,\tau_2]}$. The fact that $\ell_{k+1}$ satisfies (\ref{RECk}) can be demonstrated using the same arguments employed to show that $(\mbox{spt}(\ell_0) \setminus \{\omega(s_0^-)\}) \cap \mathfrak{C}_2 \neq \emptyset$ and $\mbox{spt}(\ell_0) \subset \mathfrak{C}_1$ are not possible. This completes the proof of Step 2.

Step 3: We complete the proof of Lemma \ref{PROPFirstloopLEM2} by proving that $x_0 \geq \beta_0^{1/2}/2$, provided that $\epsilon>0$ is sufficiently small.  Consider the curves
\[
\Upsilon := \Bigl\{ x \in \R^2 \, \vert  \, 2x_1=x_2^{\bar{m}}, \, x_2 \geq 0 \Bigr\} \quad \mbox{and} \quad \Xi_{\rho} :=  \Bigl\{ x \in \R^2 \, \vert  \, P(x)=-\rho,  \, 2x_1 \in \left[0, \rho^{\frac{1}{2}}\right] \Bigr\},
\]
for $\rho>0$. The intersection $\Upsilon \cap \Xi_1$ is empty because a point $(x_1,x_2)$ in the intersection must satisfy $4x_1^2=x_2^m=x_1^2+1$, which implies $x_1=\sqrt{3}/3 > 1/2$. Moreover, for every $\rho\in (0,1]$, the map $\Delta_{\rho}:\R^2 \rightarrow \R^2$ defined by $\Delta_{\rho}(x):=(\rho^{-1/2}x_1,\rho^{-1/m}x_2)$ is $\rho^{-1/2}$-Lipschitz, and it satisfies $\Upsilon=\Delta_{\rho}(\Upsilon)$ and $\Xi_{\xi,1}=\Delta_{\rho}(\Xi_{\xi,\rho})$. Hence, denoting for every $\rho >0$ by $\mbox{dist}(\Upsilon,\Xi_{\rho})$ the infimum of $|x-x'|$ for $x\in \Upsilon$ and $x'\in \Xi_{\rho}$, we have
\begin{eqnarray}\label{4janv1}
\mu:=\mbox{dist}\left(\Upsilon,\Xi_{1}\right)  >0 \quad \mbox{and} \quad \mbox{dist}\left(\Upsilon,\Xi_{\rho}\right) \geq \mu \rho^{\frac{1}{2}} \qquad \forall \rho \in (0,1].
\end{eqnarray}
Set $c:=1/2 \in (0,c_0)$ and suppose, by contradiction, that $x_0< \sqrt{\beta_0}/2$. Then, $\omega(t_0)=(x_0,y_0) \in \mbox{spt}(\ell)\cap \Xi_{\beta_0} \cap U_{c}$ (because $\beta_0=y_0^m-x_0^2>4x_0^2$). However, $\Upsilon$ contains the boundary of $U_{c}$ within the open set $\{x_1>0\}$ and, by Step 2, $\mbox{spt}(\ell_0)$ is not entirely contained in $U_{c}$ for $\epsilon\in (0,\epsilon_c)$. Therefore, we may assume that $\mbox{spt}(\ell_0)\cap \Upsilon\neq \emptyset$. From (\ref{4janv1}), we deduce that
\[
L_0 \geq \mbox{dist} \left(\Upsilon,\Xi_{\beta_0}\right) \geq \mu \beta_0^{\frac{1}{2}} \geq  \mu \frac{\beta_0}{y_0^{\bar{m}}} =  \mu \frac{\beta_0}{\max \left\{x_0,\left|y_0\right|^{\bar{m}}\right\}} = \mu \delta_0,
\]
where we used the relations $\beta_0=-P(\omega(t_0))=y_0^m-x_0^2\leq y_0^m$ and $y_0^m\geq x_0^2$. By Lemma \ref{PROPFirstloopLEM1}, the inequality above implies $y_0 \geq c(\mu)\epsilon$. Using $x_0<\sqrt{\beta_0}/2$ and Lemma \ref{PROPPrelLEM1} (v), we finally obtain
\[
c(\mu) \epsilon \leq y_0  =\left(x_0^2+\beta_0\right)^{\frac{1}{m}} \leq \left(\frac{5}{4}\right)^{\frac{1}{m}} \beta_0^{\frac{1}{m}} \leq \left(\frac{5C}{4}\right)^{\frac{1}{m}} \epsilon^{\frac{3}{2}-\frac{1}{m}},
\]
which leads to a contradiction for sufficiently small $\epsilon>0$, as $m\geq 5$. This completes the proof of Lemma \ref{PROPFirstloopLEM2}.
\end{proof}

Our next lemma is the following.

\begin{lemma}\label{LEM3janvier}
By taking $\epsilon_0>0$ smaller and $C>0$ larger, if necessary, as prescribed in Lemma \ref{PROPPrelLEM1}, it follows that for any $\epsilon \in (0,\epsilon_0)$, the inequality $|\lambda| \beta_0^2 \leq C$ holds.
\end{lemma}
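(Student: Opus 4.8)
The plan is to extract quantitative information from the only rigidity we have for the first loop, namely the turning estimate of Lemma~\ref{LEMAll0}(ii). Since $\dot\theta=\lambda Q(\omega)=4\lambda\,\omega_1 P(\omega)$ keeps a constant sign on $J_0$ (because $\omega_1>0$ by Lemma~\ref{PROPPrelLEM1}(i) and $P\circ\omega$ has constant sign on the interior of the interval $I_i\supset J_0$), that lemma gives
\[
4|\lambda|\int_{J_0}\omega_1(t)\,\bigl|P(\omega(t))\bigr|\,dt=\int_{J_0}|\dot\theta(t)|\,dt<2\pi .
\]
Hence $|\lambda|\beta_0^2\le \dfrac{\pi\beta_0^2}{2\int_{J_0}\omega_1|P|\,dt}$, and the whole statement reduces to proving a lower bound of the form $\int_{J_0}\omega_1\,|P(\omega)|\,dt\ge c'\beta_0^2$ for a constant $c'>0$ depending only on $m$.

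To produce this lower bound I would localize around the maximizer $t_0\in J_0$, where $|P(\omega(t_0))|=\beta_0$ and, by Lemma~\ref{PROPFirstloopLEM2}, $\omega_1(t_0)=x_0\ge c\beta_0^{1/2}$. On an interval $\mathcal J=[t_0-h,t_0+h]\cap J_0$ with $h$ of order $\beta_0^{1/2}$ one gets $\omega_1\ge x_0-h\ge\tfrac{c}{2}\beta_0^{1/2}$ from $|\dot\omega_1|\le1$. The essential point is to show $|P(\omega(t))|\ge\beta_0/2$ on such a $\mathcal J$: differentiating along (\ref{SYSomega}) gives $\frac{d}{dt}P(\omega)=2\omega_1\cos\theta-m\omega_2^{m-1}\sin\theta$, which vanishes at an interior maximum, so a second-order expansion controlled by the system yields $|P(\omega(t))|\ge\beta_0-C\epsilon^{m-2}(t-t_0)^2\ge\beta_0/2$ for $|t-t_0|\lesssim\beta_0^{1/2}$ and $\epsilon$ small (here $m\ge5$ makes $\epsilon^{m-2}$ harmless). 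Integrating over $\mathcal J$ then gives $\int_{J_0}\omega_1|P|\gtrsim\beta_0^{1/2}\cdot\beta_0\cdot|\mathcal J|\gtrsim\beta_0^2$, which is exactly what is needed, provided $|\mathcal J|$ can be taken of order $\beta_0^{1/2}$.

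The hard part is precisely the regime in which this localization breaks down: when the first loop is very short (so $|\mathcal J|$ cannot reach size $\beta_0^{1/2}$) and stays inside the thin band $\{|P|\ge\beta_0/2\}$ surrounding the level set $\{|P|=\beta_0\}$. This is the high-curvature regime, and here the turning argument is self-referential, since the quadratic control of $P\circ\omega$ is spoiled by the term $\kappa=4\lambda\omega_1 P$ whose magnitude is what we are trying to bound. I would treat it separately by the weighted-area machinery rather than by turning: the a priori bound $L_0=L(\ell_0)\le C\beta^{1-1/m}$ of Lemma~\ref{PROPPrelLEM1}(viii) (with $1-1/m>\tfrac12$ for $m\ge5$, so $L_0\lesssim\beta_0^{1/2}$ when the loop carries the global maximum of $|P|$), combined with the isoperimetric comparison of Lemma~\ref{LEMStokesIso}(ii) and the obstructions of Lemma~\ref{LEMObs}, should rule out a tiny convex loop concentrated near the maximum of $|P|$, exactly as in the proof of Lemma~\ref{PROPFirstloopLEM1}: such a loop can be compared with a circular region of the same length enclosing at least as much weighted area, contradicting Lemma~\ref{LEMObs}(i). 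The delicate verification is that these area comparisons close up uniformly, in particular when $\beta_0<\beta$, where one must instead invoke obstruction (v) of Lemma~\ref{LEMObs} at the larger peak of $|P|$ lying off the loop.
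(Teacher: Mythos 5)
Your opening reduction (the two-sided turning estimate of Lemma \ref{LEMAll0} (ii) on $J_0$, hence ``it suffices to show $\int_{J_0}\omega_1|P(\omega)|\,dt\geq c'\beta_0^2$'') is correct but has no independent traction: since the integrand is bounded by $(x_0+L_0)\beta_0$, that integral bound already forces $L_0\gtrsim \beta_0/\max\{x_0,|y_0|^{\bar m}\}=\delta_0$, which is exactly Lemma \ref{LEM34janvier} --- and in the paper that lemma is proved only \emph{after}, and \emph{by means of}, the present one. Your two devices for closing the gap both fail in the regime that carries all the content, namely a very short loop. The second-order expansion of $\bar P=P\circ\omega$ is, as you yourself note, self-referential: $\ddot{\bar P}$ contains $-2\omega_1\dot\theta\sin\theta-m\omega_2^{m-1}\dot\theta\cos\theta$ with $\dot\theta=4\lambda\omega_1 P$, i.e.\ the factor $\lambda$ you are trying to bound (and even the $\lambda$-free term $2\cos^2\theta$ is not $O(\epsilon^{m-2})$ away from $t_0$). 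The isoperimetric fallback cannot rescue it, because every weighted-area comparison in Lemma \ref{LEMStokesIso} (ii) and Lemma \ref{LEMObs} is homogeneous in the loop length: $|A(\ell_0)|\lesssim \sup_{\mathcal{E}(\ell_0)}|Q|\cdot L_0^2$, while an admissible competitor of length $\leq L_0$ encloses weighted area $\lesssim \sup|Q|\cdot L_0^2$ (or $\sim\epsilon^m L_0^3$ for the circle near $A_\epsilon$ used in Lemma \ref{PROPFirstloopLEM1}). The $L_0$-powers cancel or point the wrong way, so these obstructions constrain \emph{where} the loop sits (it must carry the maximum of $|Q|$, cf.\ Lemma \ref{LEMObs} (v)), never \emph{how small} it is: a convex loop of arbitrarily small length placed at the maximum of $|Q|$ passes every such test, and that configuration is precisely the one corresponding to $|\lambda|\beta_0^2\to\infty$.

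The paper's proof escapes this by integrating the turning over an interval that is \emph{not} contained in $J_0$. With $M:=\max\{|y_0|^{\bar m},\sqrt{\beta_0}\}$, which is comparable to $x_0$ by Lemma \ref{PROPFirstloopLEM2}, it takes $I_a:=[t_0-a\beta_0/x_0,\,t_0]$, whose length is independent of $L_0$ and of $\lambda$. On $I_a$ the purely first-order, $\lambda$-free estimate $|\dot{\bar P}|\leq 2\omega_1+m|\omega_2|^{m-1}\leq CM$ yields $|P|\geq\beta_0-CaM\,\beta_0/x_0\geq\beta_0/2$ and $\omega_1\gtrsim M$. Note the balance that your choice $h\sim\beta_0^{1/2}$ destroys: the interval is short enough ($\sim\beta_0/M$) for the Lipschitz bound on $P$ to suffice, yet $\omega_1\sim M$ compensates, so $\int_{I_a}\omega_1|P|\gtrsim(\beta_0/M)\cdot M\cdot\beta_0=\beta_0^2$; with $h\sim\beta_0^{1/2}$ the first-order bound gives only $|P|\geq \beta_0-CM\beta_0^{1/2}$, useless whenever $M\gg\beta_0^{1/2}$, which is the typical case. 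Finally, the total turning over $I_a$ --- which may run backwards out of the loop and even out of the interval $I_i$ containing it --- is bounded by $8\pi+1$ using monotonicity of $\theta$ on each $I_i$, Lemma \ref{LEMAll0} (ii) and (viii), and a Gauss--Bonnet estimate for $\omega|_{I_0}$; then
\[
8\pi+1\;\geq\;4|\lambda|\int_{I_a}\omega_1(t)\,|P(\omega(t))|\,dt\;\gtrsim\;|\lambda|\beta_0^2.
\]
This extension of the turning estimate beyond the loop, made possible by the $\lambda$-independent control of $\dot{\bar P}$, is the missing idea in your proposal.
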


\begin{proof}
Set $M:=\max \{|y_0|^{\bar{m}},\sqrt{\beta_0}\}$. There exists $\sigma=\pm 1$ such that $\beta_0=\sigma (x_0^2-y_0^m)>0$. Therefore, we have $x_0^2=\sigma\beta_0 + y_0^m \leq \beta_0 + |y_0|^m\leq 2M^2$. If $M=\sqrt{\beta_0}$, then Lemma \ref{PROPFirstloopLEM2} implies $x_0\geq c \sqrt{\beta_0}$. However, if $M=|y_0|^{\bar{m}}$, there are two cases to consider. If $y_0^m\geq 2 \beta_0$, then we have $x_0^2=\sigma\beta_0 + y_0^m\geq y_0^m/2=M^2/2$. If $y_0^m< 2 \beta_0$, since $\sigma\beta_0 + y_0^m\geq 0$ implies $|y_0|^m\leq 2 \beta_0$, Lemma \ref{PROPFirstloopLEM2}  gives $x_0\geq c \sqrt{\beta_0} \geq cM/\sqrt{2}$. In conclusion, by setting $c_1:=\min\{1,c\}/\sqrt{2}$ and $c_2:=\sqrt{2}$, and by recalling that $\beta_0\leq M^2$, we obtain
\begin{eqnarray}\label{EQ1732}
c_1 M \leq x_0 \leq c_2 M \quad \mbox{and} \quad \frac{\beta_0}{c_2 M}\leq  \frac{\beta_0}{x_0} \leq  \frac{\beta_0}{c_1M} \leq \frac{M}{c_1}.
\end{eqnarray}
Given a parameter $a\in (0,1]$ to be fixed later, we consider the interval $I_a:=[t_0-a\beta_0/x_0,t_0]$. Using the bounds $|\dot{\omega}_1|\leq 1$ and  (\ref{EQ1732}), we note that we have
\begin{eqnarray}\label{eq:bound_omega1beta}
 \left(c_1- \frac{a}{c_1} \right) M \leq x_0 - \frac{a\beta_0}{x_0} \leq \omega_1(t) \leq x_0 + \frac{a\beta_0}{x_0}\leq \left(c_2+\frac{a}{c_1}\right)M \quad \forall t\in I_a\cap[0,L(\omega)].
\end{eqnarray}
Thus, assuming $a \leq c_1^{2}$, we have $I_a\subset [0,L(\omega)]$, and,  using $|\dot\omega_2|\leq 1$ and (\ref{EQ1732}),
we obtain
\begin{eqnarray}\label{eq:bound_omega2beta}
|\omega_2(t)| \leq |y_0| +  \frac{a\beta_0}{x_0} \leq M^{\frac{2}{m}} +  \frac{a M}{c_1} \leq 2M^{\frac{2}{m}} \qquad \forall t \in I_a.
\end{eqnarray}
Using (\ref{eq:bound_omega1beta}) and (\ref{eq:bound_omega2beta}), we can bound the derivative of $P(t)=P(\omega(t))$ on $I_a$ as follows:
\begin{eqnarray}\label{dotP}
	|\dot P(t)| \leq 2\omega_1(t) +m |\omega_2(t)|^{m-1} \leq C_1 M  + C_2 M^{2-\frac{2}{m}} \leq C M \qquad \forall t \in I_a,
\end{eqnarray}
where $C_1:=2(c_1+c_2), C_2:=m2^{m-1}$ and $C=C_1+C_2$, provided $\epsilon>0$ is small enough. We now fix $a\in (0,c_1^2]$ such that $c_1-a/c_1 \geq 1/(2c_1)$ and $aC/c_1 \leq 1/2$. From (\ref{eq:bound_omega1beta}) and (\ref{dotP}), it follows that
\begin{eqnarray}\label{boundP}
\omega_1(t) \geq \frac{M}{2c_1} \quad \mbox{and} \quad |P(t)| \geq \beta_0 - \frac{a\beta_0}{x_0} C M \geq \beta_0 -  \frac{aC\beta_0}{c_1} \geq \frac{\beta_0}{2} \qquad \forall t \in I_a.
\end{eqnarray}
From Lemma \ref{LEMAll0} (ii), we have $\int_{\ell_0} |\dot{\theta}(t)|\, dt < 2\pi$. Moreover, Lemma \ref{LEMAll0} (viii) implies that $\int_{\tau}^{s_0^-} |\dot{\theta}(t)|\, dt \leq 2 \pi$, where $\tau=\tau_0$ if $J_0\subset I_0$ and $\tau=\tau_1$ if $J_0\subset I_1$. If $\tau=\tau_0$, we have
\[
\int_{I_a} |\dot\theta(t)| dt \leq \int_0^{t_0}|\dot\theta(t)| dt \leq  \int_{0}^{s_0^-} |\dot\theta(t)|  \, dt + \int_{s_0^-}^{s_0^+} |\dot\theta(t)|  \, dt \leq 4\pi.
\]
If $\tau=\tau_1$, the curve $\omega_{I_0}$ is injective. Using the Gauss--Bonnet formula (\ref{GaussBonnet}) applied to the positively oriented closed simple curve $\eta:[0,\tau_1+\tau'] \rightarrow \R^2$ formed by concatenating $\omega|_{ [0,\tau_1]}$ with the segment $P_{ [\omega(\tau_1),\omega(0)]}$ of $\{P=0\}$, joining $\omega(\tau_1)$ to $\omega(0)$, reparametrized by arc length on an interval $[0,\tau']$, we obtain
\[
2 \pi =  \int_{0}^{\tau_{1}} \kappa(t) \, dt + \int_{0}^{\tau'} \bar{\kappa}(t) \, dt  + \delta_0 + \delta_1,
\]
where $\kappa$ and $\bar{\kappa}$ are the signed curvature of $\omega$ and $P_{ [\omega(\tau_1),\omega(0)]}$, respectively, and $\delta_0, \delta_1 \in [-\pi,\pi]$ represent the discontinuities of curvature of $\eta$ at $t=0$ and $t=\tau_1$. Given that $\tau'\leq 1$ and $|\bar{\kappa}|\leq 1$ for sufficiently small $\epsilon>0$, recalling (\ref{GaussBonnet1}) gives
\[
\int_{I_a} |\dot\theta(t)| dt \leq  \int_0^{\tau}|\dot\theta(t)| dt +  \int_{\tau}^{t_0}|\dot\theta(t)| dt = \left| \int_0^{\tau}\kappa(t) dt\right| +  \int_{\tau}^{t_0}|\dot\theta(t)| dt \leq 8\pi +1.
\]
Finally, using (\ref{SYSomega}), (\ref{EQ1732}) and \eqref{boundP}, we conclude  that
\[
8\pi+1\geq \int_{I_a} 4|\lambda| \, \omega_1(t) \, |P(t)| \, dt \geq \frac{a\beta_0}{x_0} \left(4 |\lambda|  \frac{M }{2c_1} \frac{\beta_0}{2} \right)\geq  \frac{|\lambda| \beta_0^2}{c_1c_2},
\]
which completes the proof.
\end{proof}

We can now state the lemma required to end the proof of Proposition \ref{PROPfirstloop}.

\begin{lemma}\label{LEM34janvier}
There is $c_0>0$ such that $L_0\geq c\delta_0$.
\end{lemma}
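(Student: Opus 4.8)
The plan is to extract a lower bound on the loop length $L_0$ directly from the \emph{total turning} of the loop, exploiting that the turning rate $\dot\theta$ is governed by $Q\circ\omega$ and that $|\lambda|\beta_0^2$ is already under control. All the substantial geometric work has been done in the preceding lemmas, so this step should reduce to combining three inputs: the turning estimate of Lemma \ref{LEMAll0}(ii), the bound $|\lambda|\beta_0^2\le C$ of Lemma \ref{LEM3janvier}, and the bound $x_0\ge c\sqrt{\beta_0}$ of Lemma \ref{PROPFirstloopLEM2}.

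First I would record that, since $J_0\subset I_0$ or $J_0\subset I_1$ by (\ref{EQFirst1}), the loop $\ell_0$ is the first loop of $\omega|_{I_i}$ for the relevant index $i$, so Lemma \ref{LEMAll0}(ii) applies and gives $\int_{J_0}\sigma\dot\theta\,dt\in(\pi,2\pi)$. On $I_i$ the function $P\circ\omega$, and hence $Q\circ\omega=4\omega_1\,(P\circ\omega)$ (recall $\omega_1>0$ by Lemma \ref{PROPPrelLEM1}(i)), keeps a constant sign; since $\lambda$ is constant, $\dot\theta=\lambda\,Q(\omega)$ keeps a constant sign on $J_0$, so that $\int_{J_0}|\dot\theta|\,dt=\int_{J_0}\sigma\dot\theta\,dt>\pi$.

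Next I would bound the integrand on $J_0$. As $t_0\in J_0$ and $\omega$ is parametrized by arc length, $\omega_1(t)\le x_0+L_0$ for all $t\in J_0$, while $|P(\omega(t))|\le\beta_0$ by definition of $\beta_0$. Therefore
\[
\pi<\int_{J_0}|\dot\theta(t)|\,dt=\int_{J_0}4|\lambda|\,\omega_1(t)\,|P(\omega(t))|\,dt\le 4|\lambda|\,\beta_0\,(x_0+L_0)\,L_0,
\]
and inserting $|\lambda|\le C/\beta_0^2$ from Lemma \ref{LEM3janvier} yields the key inequality
\[
L_0\,(x_0+L_0)>\frac{\pi\beta_0}{4C}.
\]
I would then split into two cases. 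If $L_0\le x_0$, then $x_0+L_0\le 2x_0$ gives $L_0>\frac{\pi\beta_0}{8Cx_0}\ge\frac{\pi}{8C}\,\delta_0$, where I use $\delta_0\le\beta_0/x_0$. If instead $L_0>x_0$, then $x_0+L_0<2L_0$ gives $L_0^2>\frac{\pi\beta_0}{8C}$; combining this with $\delta_0\le\beta_0/x_0\le\sqrt{\beta_0}/c$ (which follows from $x_0\ge c\sqrt{\beta_0}$ in Lemma \ref{PROPFirstloopLEM2}), so that $\sqrt{\beta_0}\ge c\,\delta_0$, produces $L_0>c\sqrt{\pi/(8C)}\,\delta_0$. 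Taking $c_0:=\min\bigl\{\pi/(8C),\,c\sqrt{\pi/(8C)}\bigr\}>0$ then gives $L_0\ge c_0\delta_0$ in both cases.

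There is no genuinely hard step here: the difficulty has already been absorbed into Lemma \ref{LEM3janvier} and Lemma \ref{PROPFirstloopLEM2}. The only point requiring care is bookkeeping of the constants, namely checking that $c_0$ depends only on $m$ (through $C$ and $c$) and not on $\epsilon$. This is exactly what is needed, since the purpose of the lemma is to apply Lemma \ref{PROPFirstloopLEM1} with the \emph{fixed} value $K=c_0$ and thereby conclude $y_0\ge c(c_0)\,\epsilon$, which drives the proof of Proposition \ref{PROPfirstloop}.
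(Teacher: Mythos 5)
Your proof is correct and follows essentially the same route as the paper: the case split $L_0\lessgtr x_0$, the total-turning bound $\int_{J_0}|\dot\theta|\,dt>\pi$ from Lemma \ref{LEMAll0} (ii) combined with $|\dot\theta|=4|\lambda|\,\omega_1|P\circ\omega|\le 4|\lambda|\beta_0(x_0+L_0)$, and the inputs $|\lambda|\beta_0^2\le C$ (Lemma \ref{LEM3janvier}) and $x_0\ge c\sqrt{\beta_0}$ (Lemma \ref{PROPFirstloopLEM2}). The only cosmetic difference is that in the case $L_0\ge x_0$ the paper concludes in one line via $L_0\ge x_0\ge c^2\beta_0/x_0\ge c^2\delta_0$ without invoking the turning estimate, whereas you route that case through the turning inequality as well; both are valid.
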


\begin{proof}
If $L_0\geq x_0$, the result follows immediately from  (\ref{3janv1}), because  $L_0 \geq x_0  \geq c^2 \beta_0/x_0 \geq c^2 \delta_0$.
We now address the case where $L_0< x_0$. By Lemma \ref{LEMAll0} (ii), (\ref{SYSomega}), and $|J_0|=L_0$, we obtain
\begin{multline*}
\pi \leq \int_{J_0} \left|\dot{\theta}(t)\right| \, dt = |\lambda| \int_{J_0} \left|Q(\omega(t))\right| \, dt \leq |\lambda| L_0 \max_{t\in J_0} |Q(\omega(t))|\\
 \leq 4|\lambda| L_0 \beta_0 \max_{t\in J_0}\omega_1(t)) \leq 4|\lambda| L_0 \beta_0 \left(x_0+L_0\right).
\end{multline*}
Using $L_0<x_0$ and Lemma \ref{LEM3janvier}, it follows that
\[
L_0 \geq \frac{\pi}{4|\lambda| \beta_0 (x_0+L_0)} \geq \frac{\pi}{8|\lambda| \beta_0 x_0} \geq \frac{\pi \beta_0}{8Cx_0} \geq \frac{\pi \beta_0}{8C \max \{x_0,|y_0|^{\bar{m}}\} } = \frac{\pi}{8C} \, \delta_0,
\]
which completes the proof of the lemma.
\end{proof}

\begin{proof}[Proof of Proposition  \ref{PROPfirstloop}]
Lemmas \ref{LEM34janvier} and \ref{PROPFirstloopLEM1} imply that $y_0=\omega_2(t_0)\geq c(c_0)\epsilon$. From Lemma \ref{PROPPrelLEM1} (viii), we have $L_0  = o(\epsilon^{\bar{m}})$. Then $\omega_2(s_0^-) \geq c(c_0)\epsilon/2$ for $\epsilon>0$ small enough. The conclusion follows from Lemma \ref{PROPPrelLEM2}.
\end{proof}

\subsection{End of proof of Proposition \ref{PROPMain}}

Assertions (i) and (ii) have already been proven in Lemma \ref{PROPPrelLEM1}. To prove (iii) we argue by contradiction and assume that $\lambda >0$. We begin by applying several assertions from Lemma \ref{LEMAll0}. By (iii) and (vii), if $\omega|_{ I_0}$ admits a loop, it must be unique. Therefore, by (ix),  we have $\omega_2 (\tau_1)\leq \omega_2(0)=0$, which contradicts (i) (note that $P(\omega(\tau_1))=0$). Hence, $\omega_{I_0}$ is injective. From this, we conclude that the domain enclosed by $\omega|_{ I_0}$ and the curve $\{P=0, \,x_1\geq 0\}$ is convex. Hence the maximum of $P$ is attained on $\mbox{spt}(\omega|_{[0,\tau_1]})$. A simple computation shows that the maximum of $P$ over a segment $[0,(t^{\bar{m}},t)]$, for $t>0$, is given by $\alpha t^m$ for some constant $\alpha >0$. Hence, by Lemma \ref{PROPPrelLEM1} (v), we obtain
\[
\alpha \omega_2(\tau_1)^m\leq \max _{s\in [0,\tau_1]} P(\omega(s)) \leq \beta \leq C \, \epsilon^{3\bar{m}-1}.
\]
By Lemma \ref{LEMAll0} (iv) and (v), we have $1 \in \mathcal{I}^-$, and $\omega|_{ I_1}$ contains at least one loop. Furthermore, by Proposition \ref{PROPfirstloop} and Lemma \ref{LEMAll0} (vii), it contains exactly one loop, provided $\epsilon>0$ is sufficiently small, specifically $\epsilon<\epsilon(c)$, where $c$ is given by Proposition \ref{PROPfirstloop}. Therefore, by Lemma \ref{LEMAll0} (ix), we must have $\omega_2(\tau_1)\geq\omega_2(\tau_{2})$. Using the above inequality, we deduce
\[
\omega_2(\tau_{2}) \leq \omega_2(\tau_1)  \leq \left(\frac{C}{\alpha}\right)^{\frac{1}{m}} \epsilon^{\frac{3}{2}-\frac{1}{m}},
\]
which, for sufficiently small $\epsilon>0$, contradicts the lower bound $\omega_2(\tau_2)\geq c\epsilon$ provided by Proposition \ref{PROPfirstloop}.\\

Before we proceed with the proof of the remaining assertions, we note that by Proposition \ref{PROPfirstloop}, the upper bound on $L_0$ given by Lemma \ref{PROPPrelLEM1} (viii), the lower bound on $L_0$ provided by Lemma \ref{LEM34janvier}, and the inequality $\max\{x_0,|y_0|^{\bar{m}}\} \leq C\epsilon^{\bar{m}}$ (which follows by Lemma \ref{PROPPrelLEM1} (iv)), we may assume that
\begin{eqnarray}\label{EQ35janv1}
y_0 \geq c\epsilon \quad \mbox{and} \quad c\beta_0 \epsilon^{-\bar{m}} \leq L_0  \leq C\beta^{1-\frac{1}{m}},
\end{eqnarray}
for some constants $c,C>0$ and sufficiently small $\epsilon>0$. We further claim that we may also assume
\begin{eqnarray}\label{EQ35janv2}
|\lambda| \beta_0^2\geq c.
\end{eqnarray}
Consider a time interval $[t_1,t_2] \subset J_{0}$ where the times $t_1$ and $t_2$ will be chosen later. By (\ref{SYSomega}) and since $\omega_1$ and $\omega_2$ do not vanish on $J_{0}$ according to Proposition \ref{PROPfirstloop}, we have for any $t\in [t_1,t_2]$,
\begin{multline*}
  \frac{d}{dt} \left\{ 2\lambda P(\omega(t))^2 - 2 \sin \left(\theta(t)\right) - m \cos \left(\theta(t)\right) \frac{\omega_2(t)^{m-1}}{\omega_1(t)}\right\} = - m \cos\left(\theta(t)\right) \frac{d}{dt} \left\{ \frac{\omega_2(t)^{m-1}}{\omega_1(t)} \right\} \\
= - m \cos \left(\theta(t)\right) \frac{\omega_2(t)^{m-1}}{\omega_1(t)} \left( \cos \left(\theta(t)\right) + (m-1)  \sin \left(\theta(t)\right) \frac{\omega_1(t)}{\omega_2(t)}\right).
\end{multline*}
From Proposition \ref{PROPfirstloop} and Lemma \ref{PROPPrelLEM1} (iv), it follows that the absolute value of the right-hand side of the above equality is less than $C/\epsilon$ for some constant $C>0$ and sufficiently small $\epsilon>0$. Integrating this equality over $[t_1,t_2]\subset J_0$, we obtain
\begin{eqnarray}\label{eq:lP2-1}
\left|2\lambda \left(P(\omega(t_2))^2-P(\omega(t_1))^2\right)-2(\sin(\theta(t_2))-\sin(\theta(t_1)))\right| \leq |D|+\frac{CL_0}{\epsilon},
\end{eqnarray}
where
\begin{eqnarray}\label{eq:lP2-2}
D =m\left(\cos(\theta(t_2))\frac{\omega_2(t_2)^{m-1}}{\omega_1(t_2) }-\cos(\theta(t_1))\frac{\omega_2(t_1)^{m-1}}{\omega_1(t_1) }\right).
\end{eqnarray}
From Proposition \ref{PROPfirstloop} and Lemma \ref{PROPPrelLEM1} (iv), it follows  that $|D|\leq C\epsilon^{\bar{m}-1}$. Additionally,  from Lemma \ref{PROPPrelLEM1} (v) and (viii), we have $L_0 \epsilon^{-1} \leq C^{2-1/m}  \epsilon^{\bar{m}-1}$. Using (\ref{eq:lP2-1}), we deduce that by taking $C>0$ larger if necessary,
\[
\left| \lambda \left(P(\omega(t_2))^2-P(\omega(t_1))^2\right)\right|\geq |\sin(\theta(t_2))-\sin(\theta(t_1))|-C\epsilon^{\bar{m}-1}.
\]
By Lemma \ref{LEMAll0} (ii), the times $t_1$ and $t_2$ can be chosen so that $|\sin(\theta(t_2))-\sin(\theta(t_1))|\geq 1$. Therefore, by choosing $\epsilon_0>0$ small enough, we obtain
\[
\left|\lambda \left(P(\omega(t_2))^2-P(\omega(t_1))^2\right)\right|\geq 1/2.
\]
The inequality (\ref{EQ35janv2}) follows by noting that $|P(\omega(t_2))^2-P(\omega(t_1))^2|\leq 2\beta_0^2$. Before returning to the proof of the remaining assertions, we also need the following result.

\begin{lemma}\label{LEM32janvier}
Let $\bar{P}:=P\circ \omega : [0,L(\omega)] \rightarrow \R$ and $t_*\in [0,L(\omega)]$ such that $\bar{P}(t_*)\in (0,\beta_0)$, $\dot{\bar{P}}(t_*)=0$, $\ddot{\bar{P}}(t_*) \leq 0$, and $\sin \theta(t_*)>0$. Then $|\lambda| P(\omega(t_*))^{1+1/\bar{m}}\leq m(m-1)/8$.
\end{lemma}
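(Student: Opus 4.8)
The plan is to extract the estimate entirely from the second-order condition $\ddot{\bar{P}}(t_*)\le 0$, evaluated along the normal-extremal system (\ref{SYSomega}). First I would write $\bar{P}(t)=\omega_1(t)^2-\omega_2(t)^m$ and differentiate twice, substituting $\dot{\omega}_1=\cos\theta$, $\dot{\omega}_2=\sin\theta$, and $\dot{\theta}=\lambda Q(\omega)$. This gives
\[
\dot{\bar{P}} = 2\omega_1\cos\theta - m\omega_2^{m-1}\sin\theta,
\]
and then
\[
\ddot{\bar{P}} = 2\cos^2\theta - m(m-1)\omega_2^{m-2}\sin^2\theta - \lambda Q(\omega)\bigl(2\omega_1\sin\theta + m\omega_2^{m-1}\cos\theta\bigr).
\]

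The crucial simplification comes from the first-order condition $\dot{\bar{P}}(t_*)=0$, i.e. $2\omega_1\cos\theta = m\omega_2^{m-1}\sin\theta$ at $t_*$ (write $c=\cos\theta(t_*)$, $s=\sin\theta(t_*)>0$, with $\omega_j$ evaluated at $t_*$). Substituting $m\omega_2^{m-1}=2\omega_1 c/s$ collapses the bracket multiplying $\lambda Q$ into $2\omega_1 s + 2\omega_1 c^2/s = 2\omega_1/s$. Since $Q(\omega(t_*))=4\omega_1\bar{P}(t_*)$ and $\omega_1(t_*)>0$ (because $\bar{P}(t_*)>0$ forces $t_*>0$, so Lemma \ref{PROPPrelLEM1}(i) applies), the $\lambda$-term equals $8\lambda\omega_1^2\bar{P}(t_*)/s$. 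I then invoke $\lambda<0$ (assertion (iii), already established above) together with $s>0$ and $\bar{P}(t_*)>0$ to see that $-8\lambda\omega_1^2\bar{P}(t_*)/s = 8|\lambda|\omega_1^2\bar{P}(t_*)/s>0$, so that $\ddot{\bar{P}}(t_*)\le 0$ rearranges to
\[
\frac{8|\lambda|\omega_1^2\bar{P}(t_*)}{s} \le m(m-1)\omega_2^{m-2}s^2 - 2c^2.
\]

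From here the argument is elementary. The left-hand side is strictly positive, so the right-hand side is too; this forces $\omega_2(t_*)^{m-2}>0$, hence $\omega_2(t_*)>0$ (recall $m$, and therefore $m-2$, is odd). Dropping $-2c^2$ and using $0<s\le 1$ (so $s^2\le1$ and $1/s\ge1$) yields $8|\lambda|\omega_1^2\bar{P}(t_*)\le m(m-1)\omega_2^{m-2}$. Finally I use $\bar{P}(t_*)=\omega_1^2-\omega_2^m$: since $\omega_2^m>0$ we get $\omega_2^m<\omega_1^2$ and $\bar{P}(t_*)<\omega_1^2$, whence $\omega_2^{m-2}<\omega_1^{2(m-2)/m}$ and $\omega_1^{4/m}>\bar{P}(t_*)^{2/m}$. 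Combining gives $8|\lambda|\bar{P}(t_*)\,\omega_1^{4/m}< m(m-1)$ and then $8|\lambda|\bar{P}(t_*)^{1+2/m}<m(m-1)$, which is exactly $|\lambda|P(\omega(t_*))^{1+1/\bar{m}}\le m(m-1)/8$ since $1/\bar{m}=2/m$. (Only $\bar{P}(t_*)>0$ is used; the upper bound $\bar{P}(t_*)<\beta_0$ is irrelevant to this local computation.)

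I expect the only delicate point to be the bookkeeping of signs: one must verify that the coefficient of $\lambda Q$ really reduces to the positive quantity $2\omega_1/s$ via the first-order condition, that $\lambda<0$ makes this term contribute with the favorable sign, and that the odd parity of $m$ turns $\omega_2^{m-2}>0$ into $\omega_2>0$ rather than a contradiction. No single computation is hard, but ensuring that every inequality points the right way—and recognizing that stationarity is precisely what eliminates the explicit $\omega_2^{m-1}\cos\theta$ contribution from the $\lambda$-bracket—is where the care is required.
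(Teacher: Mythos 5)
Your proof is correct and follows essentially the same route as the paper: both extract the estimate from the second-order condition $\ddot{\bar{P}}(t_*)\leq 0$, use the stationarity relation $2\omega_1\cos\theta_* = m\omega_2^{m-1}\sin\theta_*$ to collapse the coefficient of $\lambda Q$ into $2\omega_1/\sin\theta_*$, and conclude from $\lambda<0$, $\sin\theta(t_*)>0$ together with elementary manipulations of $\bar{P}=\omega_1^2-\omega_2^m$. A small bonus of your version: you derive $\omega_2(t_*)>0$ explicitly from the strict positivity of the $\lambda$-term, a fact the paper uses only implicitly in its final step, where bounding $\beta_*^{2/m}$ by $x_*^{4/m}=(\beta_*+y_*^m)^{2/m}$ requires $y_*^m\geq 0$.
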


\begin{proof}
Set $x_*=(x_*,y_*):=\omega(t^*)$, $\beta_*:=P(x_*)$ and $\theta_*:=\theta(t_*)$. We have
\begin{equation}\label{33janv1}
\left\{
 \begin{array}{l}
\dot{\bar{P}}(t_*) = 2x_*\cos \theta_* - m y_*^{m-1} \sin \theta_* = 0 \\
\ddot{\bar{P}}(t_*)  = 2\cos^2 \theta_* -2x_*\dot{\theta}(t_*) \sin \theta_* - m(m-1) y_*^{m-2} \sin^2 \theta_* - my_*^{m-1} \dot{\theta}(t_*)\cos \theta_*\leq 0,
 \end{array}
\right.
\end{equation}
with $\dot{\theta}(t_*) = 4\lambda x_* \beta_*$. Since $\beta_*=x_*^2-y_*^m>0$, we have $y_*\leq x_*^{2/m}$ (recall that $x_*>0$), so the inequality $\ddot{P}(t_*)\leq 0$ yields
\begin{multline}\label{33janv2}
4\lambda x_* \beta_* \left(-2x_*\sin \theta_* -my_*^{m-1}\cos \theta_*\right)  \leq  -2 \cos^2 \theta_* +m(m-1)y_*^{m-2}\sin^2 \theta_* \\
 \leq  m(m-1)y_*^{m-2} \leq m(m-1) x_*^{2-4/m},
\end{multline}
where, by the relation given by $\dot{\bar{P}}(t_*)=0$, the left-hand side satisfies
\begin{eqnarray}\label{33janv3}
4\lambda x_* \beta_* \left(-2x_*\sin \theta_* -my_*^{m-1}\cos \theta_*\right) = - 8\lambda x_*^2 \beta_* \sin \theta_* \left( 1+\cot^2 \theta_*\right) \geq 8\lambda x_*^2 \beta_*,
\end{eqnarray}
because $\lambda<0$ and $\sin (\theta_*)>0$. Then, combining (\ref{33janv2}) and (\ref{33janv3}), and using $x_*^2=\beta_*+y_*^m$, we infer that
\[
|\lambda| P(\omega(t^*))^{1+\frac{1}{\bar{m}}} = |\lambda| \beta_*^{1+\frac{1}{\bar{m}}} \leq |\lambda| \beta_* \left(\beta_*+y_*^m \right)^{\frac{2}{m}} = |\lambda| \beta_* x_*^{\frac{4}{m}} \leq \frac{m(m-1)}{8}.
\]
\end{proof}

Let us now prove assertions (iv) and (v). By (\ref{EQFirst1}), there is $\bar{i} \in \{0,1\}$ such that $J_0\subset I_{\bar{i}}$. Using Proposition \ref{PROPfirstloop}, Lemma \ref{PROPPrelLEM2}, and obstruction (viii) of Lemma \ref{LEMObs} we deduce that $\omega_{I_i}$ is injective, for all $i\neq \bar{i}$. As a consequence, if $J_0\subset I_0$ then, by Lemma \ref{LEMAll0} (iv) and obstruction (iv) of Lemma \ref{LEMObs}, we have $I_{\omega}=I_0$. Moreover, by Lemma  \ref{LEMAll0} (vii), $\ell_0$ is the unique loop of $\omega$. Next, we show that the case $J_0 \subset I_1$ cannot occur. In this case, there are two possibilities: either $I_{\omega}=I_0\cup I_1$ with $\omega|_{ I_0}$ injective, or $I_{\omega} = I_0\cup I_1 \cup I_2$ with both $\omega|_{ I_0}, \omega|_{ I_2}$ injective. Set $\beta_+:=\max_{t\in I_{\omega}}P$. For $i=0,2$ (or only $i=0$ in the first case), consider the simple closed curve $\eta$ defined as the concatenation of $\omega|_{ I_i}$ with the curve $\check{\omega}:[0,\tau:=\tau_{i+1}-\tau_i]\rightarrow \R^2$, corresponding to the segment of $\bar{\omega}$ connecting $\omega(\tau_{i+1})$ to $\omega(\tau_i)$. Since $\lambda<0$, Lemma \ref{LEMAll0} (ix) implies that $\omega(\tau_{i+1})\geq  \omega(\tau_i)$, and, by (\ref{SYSomega}) and (\ref{GaussBonnet1}), the signed curvature $\kappa$ of $\omega$ on $(\tau_i,\tau_{i+1})$ is negative. Consequently, the curve $\eta$ is positively oriented, $\omega(\tau_{i+1})\geq  \omega(\tau_i)$, the oriented angles  $\delta_i = \mbox{ang}(\dot{\omega}(\tau_{i+1}),\dot{\check{\omega}}(0))$ and $\delta_{i+1} = \mbox{ang}(\dot{\check{\omega}}(\tau),\dot{\omega}(\tau_i))$ lie in $(0,\pi)$, $\theta(\tau_{i+1}) < \theta(t) < \theta(\tau_i)$ for all $t\in (\tau_i,\tau_{i+1})$, and, by the Gauss--Bonnet formula (\ref{GaussBonnet}), we have
\[
2\pi -\delta_i - \delta_{i+1} = \int_{\tau_i}^{\tau_{i+1}} \kappa (t) \, dt + \int_{\tau_i}^{\tau_{i+1}} \check{\kappa} (t) \, dt,
\]
where $\check{\kappa}$ denotes the signed curvature of $\check{\omega}$. Since $\kappa\leq 0$, $\check{\kappa}$ tends to $0$ as $\epsilon\rightarrow 0$, and $\dot{\check{\omega}}(t)$ approaches the vertical vector $(0,-1)$ as $\epsilon\rightarrow 0$, we deduce that $\delta_i, \delta_{i+1}$ are close to $\pi$ for sufficiently small $\epsilon>0$, which implies that $\sin(\theta(t)) \geq 1/2$ for all $t\in I_i$. Therefore, by applying Lemma \ref{LEM32janvier} and (\ref{EQ35janv2}), we obtain a constant $D>0$ such that
\begin{eqnarray}\label{35janv4}
\beta_+ \leq D \beta_0^{\frac{2m}{m+2}}.
\end{eqnarray}
Next, by applying (\ref{19dec1}) from Proposition \ref{PROPsublevelsets} to the curve $\nu$ defined as the concatenation of $\omega|_{ [0,s_0^-]}$ with $\omega|_{[s_0^+,L(\omega)]}$, we obtain
\[
L(\omega) =  L(\nu) + L_0 \geq L(\bar{\omega}) - C \beta_+^{1-\frac{1}{m}} + L_0 \geq L(\bar{\omega}) - C D^{1-\frac{1}{m}}  \beta_0^{\frac{2(m-1)}{m+2}}+ L_0.
\]
Combining  this inequality with $L(\omega)\leq L(\bar{\omega})$ and the lower bound on $L_0$ obtained in (\ref{EQ35janv1}), we get
\[
\beta_0\epsilon^{-\bar{m}} \leq E \beta_0^{\frac{2m}{m+2}\left(1-\frac{1}{m}\right)} = E \beta_0^{\frac{2(m-1)}{m+2}} \leq E \beta_0,
\]
for some constant $E$ and $\epsilon>0$ small.  This leads to a contradiction, thereby concluding the proof that $I_{\omega}=I_0$. In summary, $\ell_0$ is the unique loop $\ell$ of $\omega$. It satisfies (iv) as a consequence of (\ref{EQ35janv1}), except for the last property ($\beta=  \max_{t\in J_{\ell}} | P(\omega(t))|$) (v) holds with $\beta_0$ instead of $\beta$ in the lower bound for $L(\ell)$, and (vi) is satisfied by (\ref{EQ35janv2}) and the relation $\beta\geq \beta_0$.  \\

To prove (vii), we consider a time $t_*\in (\tau_0,\tau_1) \setminus J_0$ where $P$ attains a local maximum (recall that $I_{\omega}=I_0=[\tau_0,\tau_1]$). By Lemma \ref{LEM32janvier}, the result follows if we show that $\sin(\theta(t_*))>0$. Suppose, by contradiction, that $\sin(\theta(t_*))\leq 0$. Then necessarily, by  \eqref{33janv1}, $\cos(\theta(t_*))<0$ and $\sin(\theta(t_*))\leq 0$, with $\sin (\theta(t_*))=0$ only if $\omega_2(t_*)=0$. Let $H_0$ denote the half-line $A_0 +[0,+\infty)(0,-1)$ and let $H$ denote the half-line $\omega(t_*)+ [0,+\infty) \dot{\omega}(t_*)$. Now, consider the planar curve $\Upsilon$ defined by
 \[
 \Upsilon := H_0 \cup \mbox{spt}(\omega|_{ [0,t_*]}) \cup H.
  \]
Let $U$ denote the connected component of $\R^2 \setminus \Upsilon$ that contains $\omega(t+\sigma)$ for small $\sigma>0$. Then, $A_{\epsilon}=\omega(\tau_1=L(\omega))\notin \bar{U}$. Therefore, the minimum $s_*\in (t_*,\tau_1]$ of $t\in (t_*,\tau_1)$ such that $\omega(t) \in \partial U$ is well-defined. We claim that $\omega|_{ [t_*,s_*]}$ has a loop contained in $U$. Indeed, by Lemma \ref{PROPPrelLEM1} (i), $\omega(s_*) \notin H_0$. Since $t_*\notin J_0$, we cannot have $\omega(t) \in \mbox{spt}(\omega|_{ [0,t_*)})$. Therefore, $\omega(s_*)\in H$, and by Lemma  \ref{LEMGaussBonnet}, $\omega|_{[t,s_*]}$ contains a loop that coincides with $\ell_0$, the unique loop of $\omega$. Furthermore, $\mbox{spt}(\ell_0)\subset U$. We now claim that
\[
P(\omega(t_*)) \geq \max \Bigl\{P(x) \, \vert \, x \in \bar{U}, \, x_2 \geq 0\Bigr\}.
\]
Since $\mbox{spt}(\ell_0)\subset U$, we have $\omega(s_0^-)\in U$, so the maximum of $x_2$ on $\bar{U}$ is greater than $c\epsilon$ (by (\ref{EQ35janv1})). Moreover, since $\sin \theta(t_*)\leq 0$, the maximum of $x_2$ on $\bar{U}$ is attained on $\mbox{spt}(\omega|_{[0,t_*]})$. Therefore, by Lemma \ref{PROPPrelLEM2} we infer that $\omega_2(t_*)>0$, for some $C>0$. In particular, from the formula for $\dot{\bar{P}}(t_*)=0$ given in (\ref{33janv1}), we conclude that $\cos\theta(t_*)<0$ and that $U$ is bounded and convex. Since the set of $x\in \R^2$ such that $P(x) >P(\omega(t_*))$, $x_2\geq 0$, and $x_1>0$ is convex and externally tangent to $U$ at $\omega(t_*)$, our claim is proved. If $\omega_1(s)\leq \omega_1(t)$ for all $s\in J_0$, then the claim implies that $Q(\omega(s))=4\omega_1(s) P(\omega(s))\leq Q(\omega(t_*))$ for all $s\in J_0$, which leads to a contradiction by obstruction (v) of Lemma \ref{LEMObs}. Therefore, there exists $\bar{s}\in J_0$ such that $\omega(\bar{s})\in U \cap \{x_1>\omega_1(t)\}$. Let $\mu>0$ be such that $\omega(\bar{s})-(0,\mu) \in \partial U$. Since $\cos(\theta(t_*)<0$, we have $x_1 < \omega_1(t_*)$ on $H \setminus \{\omega(t_*)\}$, which implies that $\omega(\bar{s}) -(0,\mu)\in \mbox{spt}(\omega|_{[0,t_*)})$, that is, $\mbox{spt} (\omega|_{[0,t_*)})+(0,\mu)\in \ell_0$. We conclude by obstruction (iii) of Lemma \ref{LEMObs}.\\

To complete the proof of (v), it remains to show that the lower bound for $L(\ell)$ holds with $\beta$ in place of $\beta_0$ and that the last property is satisfied. If the maximum of $P$ is attained outside $J_{\ell}=J_0$, then by (vii), $|\lambda| \beta^{1+1/\bar{m}}$ is bounded, which contradicts (\ref{EQ35janv2}) and (vi) for sufficiently small $\epsilon>0$. \\

To prove  (viii), we apply the Gauss--Bonnet formula to the concatenated curve $\eta$, which consists of $\omega|_{[0,s_0^-]}$, $\omega|_{[s_0^+,L(\omega]}$, the line segment $[A_{\epsilon},(0,\epsilon)]$, and the line segment $[(0,\epsilon),A_0]$. The integral of the signed curvature of $\eta$ is bounded by $3\pi+\pi/2\leq 4\pi $. Moreover, the total curvature of the loop is bounded by $2\pi$. We can thus conclude easily.

\appendix

\section{Proof of Proposition \ref{PROPsublevelsets}}\label{SECPROPsublevelsets}
Let $\epsilon, \rho>0$ be fixed. It follows from classical results of calculus of variations with constraints that the curve minimizing the length among all Lipschitz curves  $\zeta:[0,1]\rightarrow \R^2$ satisfying (\ref{CalcVar}) is the concatenated curve  $\nu_{\epsilon}^{\rho}$ (reparametrized on $[0,1]$) defined as follows (see Figure~\ref{fig1}):
\begin{eqnarray}\label{nu}
\nu_{\epsilon}^{\rho} := T_0 * \Gamma_{\rho}\left([t_0,t_1]\right) * T_1, \quad \mbox{with} \quad T_0 := \left[A_0,\Gamma_{\rho}(t_0) \right] \quad \mbox{and} \quad T_1 := \left[\Gamma_{\rho}(t_1), A_{\epsilon} \right].
\end{eqnarray}
Here, $t_0=t_0(\rho)$ is defined as the unique $t_0\geq 0$ for which the line segment $(A_0,\Gamma_{\rho}(t_0))$ is tangent to $\Gamma_{\rho}([0,+\infty))$ at $\Gamma_{\rho}(t_0)$ and $t_1=t_1(\rho,\epsilon) \geq 0$ is the unique $t_1 \geq 0$ such that the line segment $(A_{\epsilon}, \Gamma_{\rho}(t_1))$ is tangent to $\Gamma_{\rho}([0,+\infty))$ at $\Gamma_{\rho}(t_1)$. If the segment $[A_0,A_{\epsilon}]$ intersects $\Gamma_{\rho}([0,+\infty))$, then $t_0$ and $t_1$ are well-defined; otherwise we set $t_0=t_1:=0$.
\begin{figure}[H]
\begin{center}
\begin{tikzpicture}
\node[anchor=south west, inner sep=0] (image) at (0,0) {\includegraphics[width=6.8cm]{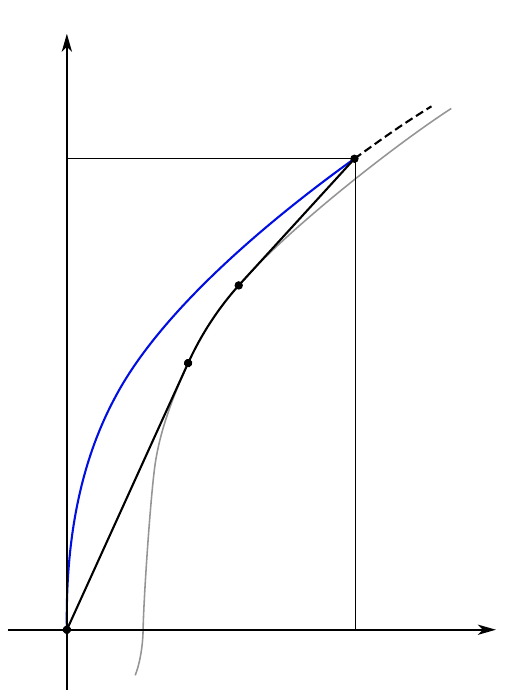}}; 

\begin{scope}[x={(image.south east)}, y={(image.north west)}]

    \node at (0.3, 0.55) {$\bar\omega$};

    \node at (0.45, 0.52) {$\nu_\epsilon^\rho$};

    \node at (0.57, 0.59) {$\Gamma_\rho(t_1)$};

    \node at (0.43, 0.44) {$\Gamma_\rho(t_0)$};

    \node at (0.7, 0.06) {$\epsilon^{m/2}$};

    \node at (0.66, 0.8) {$A_\epsilon$};

    \node at (0.09, 0.06) {$A_0$};

    \node at (0.32, 0.06) {$\rho^{1/2}$};

    \node at (0.1, 0.775) {$\epsilon$};

    \node at (0.12, 0.97) {$x_2$};

    \node at (0.97, 0.11) {$x_1$};

    \node at (0.92, 0.8) {$\{P=\rho\}$};

\end{scope}
\end{tikzpicture}
\caption{The curve $\nu_{\epsilon}^{\rho}$ in black\label{fig1}}
\end{center}
\end{figure}
Hence proving \eqref{19dec1} is equivalent to prove
\begin{eqnarray}\label{19dec1_bis}
L\left(\nu_{\epsilon}^{\rho}\right)  \geq L\left(\bar{\omega}_{\epsilon}\right) - C(K) \rho^{1-\frac{1}{m}}.
\end{eqnarray}

Let us fix $K>0$ and assume that $\rho < K \epsilon^{3\bar{m}-1}$ (recall that $\bar{m}=m/2$).
The unique $t_0\geq 0$ such that the line $(A_0,\Gamma_{\rho}(t_0))$ is tangent to $\Gamma_{\rho}([0,+\infty))$  at $\Gamma_{\rho}(t_0)$ must satisfy $f_{\rho}(t_0)=t_0f_{\rho}'(t_0)$ and the unique $t_1 \geq 0$ such that the line $(A_{\epsilon}, \Gamma_z(t_1))$ is tangent to $\Gamma_{\rho}([0,+\infty))$  at $\Gamma_z(t_1)$ must satisfy $\epsilon^{\bar{m}}-f_{\rho}(t_1)=(\epsilon-t_1)f_{\rho}'(t_1)$ with $f_{\rho}'(t)= mt^{m-1}/(2f_{\rho}(t))$. From these equations, we deduce that
\begin{eqnarray}\label{LEM19decEQ1}
 \rho = \left( \bar{m} -1\right) t_0^m \quad \mbox{and} \quad \left( t_1^m+\rho\right)^{\frac{1}{2}}  = \epsilon^{\bar{m}} - \sigma_1\left( \epsilon -t_1\right) \quad \mbox{with} \quad \sigma_1=f_{\rho}'(t_1).
\end{eqnarray}
The first equation implies that $t_0=c\rho^{1/m}$ for some $c>0$. Using the assumption on $\rho$, it follows that $t_0=o(\epsilon)$ as $\epsilon \rightarrow 0$. From~\eqref{nu}, we have
\begin{eqnarray}\label{L0}
L(\nu_{\epsilon}^{\rho}) = L_1+L_2+L_3 \quad \mbox{with} \quad L_1:=L\left(T_0\right), \, L_2:= L\left(\Gamma_{\rho}([t_0,t_1])\right), \, L_3 := L\left(T_1\right).
\end{eqnarray}
We now proceed to derive lower bounds for $L_1, L_2$, and $L_3$. For $L_1$, we have
\begin{eqnarray}\label{L1}
L_1 = \left|(f_{\rho}(t_0), t_0)\right| =\sqrt{ t_0^2 + \left( t_0^m+\rho\right)} = \sqrt{ t_0^2 +\bar{m} t_0^m} = t_0 + \frac{\bar{m}}{2} t_0^{m-1} + o(t_0^{m-1})
\end{eqnarray}
by applying the Taylor expansion of $\sqrt{1+u}$ near $u=0$. For $L_2$, we note that, on the one hand,
\[
 L_2  = \int_{t_0}^{t_1} \sqrt{1+ \bar{m}^2 t^{2m-2} \left( t^m+\rho\right)^{-1}} \, dt \geq  \int_{t_0}^{t_1} \sqrt{1+ \bar{m}^2  t^{m-2} } \, dt = L\left( \bar{\omega}|_{ [t_0,t_1]}\right).
\]
On the other hand, we have
\[
L \left( \bar{\omega}|_{ [0,t_0]}\right) = t_0 + Ct_0^{m-1} + o (t_0^{m-1}),
\]
using (\ref{11dec0}), where $C=m^2/(8(m-1))$, and
\[
L\left( \bar{\omega}|_{ [t_1,\epsilon]}\right) = \int_{t_1}^{\epsilon} \sqrt{1+\bar{m}^2 t^{m-2}}\, dt \leq \left(\epsilon - t_1\right) \sqrt{1+\bar{m}^2 \epsilon^{m-2}}.
\]
Combining these, we find that
\begin{eqnarray}\label{L2}
L_2 \geq L\left( \bar{\omega}\right) -t_0 - Ct_0^{m-1} -\left(\epsilon - t_1\right) \sqrt{1+\bar{m}^2 \epsilon^{m-2}} +  o( t_0^{m-1}).
\end{eqnarray}
Finally, for $L_3$, using (\ref{LEM19decEQ1}), we have
\begin{eqnarray}\label{L3}
L_3 = \sqrt{ (\epsilon-t_1)^2+ \left( \epsilon^{\bar{m}}-(t_1^m+\rho)^{\frac{1}{2}}\right)^2} = \left(\epsilon -t_1\right) \sqrt{1+\sigma_1^2}.
\end{eqnarray}
By combining (\ref{L0})-(\ref{L3}) and defining $C':=\bar{m}/2-C$, we obtain
\[
L(\nu_{\epsilon}^{\rho}) \geq L\left( \bar{\omega}\right) +C' t_0^{m-1} + \left( \epsilon-t_1\right) \left[ \varphi \left(\sigma_1\right)-\varphi\left( \bar{m}\epsilon^{\bar{m}-1}\right)\right] + o ( t_0^{m-1}),
\]
where $\varphi: \R \rightarrow \R$ is defined as $\varphi(\lambda):=\sqrt{1+\lambda^2}$  for $\lambda\in \R$. Since by (\ref{LEM19decEQ1}) $t_0^{m-1}=c\rho^{1-1/m}$ for some $c>0$, proving \eqref{19dec1_bis} reduces to showing that the term  $\Delta:=( \epsilon-t_1) ( \varphi (\sigma_1)-\varphi ( \bar{m}\epsilon^{\bar{m}-1}))$ can be bounded from below by $-C\rho^{1-1/m}$ for some $C>0$, provided that $\epsilon>0$ is sufficiently small. Using the convexity of $\varphi$, we have
\begin{eqnarray}\label{19dec33}
\varphi \left(\sigma_1\right)-\varphi\left( \bar{m}\epsilon^{\bar{m}-1}\right) \geq \varphi' \left( \bar{m}\epsilon^{\bar{m}-1}\right) \left( \sigma_1- \bar{m}\epsilon^{\bar{m}-1}\right) \geq - \varphi' \left( \bar{m}\epsilon^{\bar{m}-1}\right) \left| \sigma_1- \bar{m}\epsilon^{\bar{m}-1}\right|,
\end{eqnarray}
where $\varphi'(\bar{m}\epsilon^{\bar{m}-1})=\bar{m}\epsilon^{\bar{m}-1}+o(\epsilon^{\bar{m}-1})$, and, by (\ref{LEM19decEQ1}), $\Delta_1:=\sigma_1- \bar{m}\epsilon^{\bar{m}-1}$ can be expressed as
\begin{eqnarray}\label{19dec34}
\Delta_1= \bar{m} \left[\frac{t_1^{m-1}}{\left(t_1^m+\rho\right)^{\frac{1}{2}}} -\epsilon^{\bar{m}-1}\right] = \bar{m} \epsilon^{\bar{m}-1}\left[ (1-\xi)^{\bar{m}-1} \left(1+ \frac{\alpha}{ (1-\xi)^{m}} \right) -1\right],
\end{eqnarray}
where we have introduced $\alpha=\rho/\epsilon^m$ and $\xi=1-t_1/\epsilon$. From (\ref{LEM19decEQ1}), it follows that $\alpha, \xi$ satisfy
\[
F(\alpha, \xi) =0 \quad \mbox{with} \quad F(\alpha, \xi) := 1- \left[ (1-\xi)^m+\alpha \right]^{\frac{1}{2}} - \bar{m} \xi (1-\xi)^{m-1}\left[(1-\xi)^m+\alpha\right]^{-\frac{1}{2}}.
\]
We note that $F(0,0)=0$, $\frac{\partial F}{\partial \alpha}(0,0) = -1/2$, and the only solution of $F(0,\xi)=0$ with $\xi \in [0,1]$ is $\xi=0$. By the implicit function theorem, this ensures the existence of $\delta_{\alpha}, \delta_{\xi}>0$ and a smooth function $\varphi: (-\delta_{\xi},\delta_{\xi}) \rightarrow \R$ such that, for any solution $(\alpha,\xi)$ of $F(\alpha,\xi)=0$ with $|\alpha|< \delta_{\alpha}$, we have $|\xi| < \delta_{\xi}$ and $\alpha=\varphi(\xi)$. Since $\rho<K\epsilon^{3\bar{m}-1}$ implies $\rho/\epsilon^m<\delta_{\alpha}$ for sufficiently small $\epsilon>0$, we conclude that $\alpha=\rho/\epsilon^m$ and $\xi=1-t_1/\epsilon$ satisfy $\alpha=\varphi(\xi)$ for small $\epsilon>0$, with $\xi$ tending to $0$ as $\epsilon\rightarrow 0$. Furthermore, since $\varphi'(0)=0$ and $\varphi''(0)\neq 0$, there exists $c>1$ such that $\xi^2/c \leq \alpha \leq c\xi^2$ for sufficiently small $\epsilon>0$. Consequently, by (\ref{19dec33})-(\ref{19dec34}), there is $c'>0$ such that
\[
\Delta  \geq  - \epsilon \xi \left( \bar{m}\epsilon^{\bar{m}-1}+o\left(\epsilon^{\bar{m}-1}\right)\right) \bar{m} \epsilon^{\bar{m}-1}\left( \left(\bar{m}-1\right) \xi + o(\xi)\right) \geq -c'\epsilon^{m-1}\xi^2,
\]
for $\epsilon$ sufficiently small. Finally, we note that $\epsilon^{m-1}\xi^2 \leq c\epsilon^{m-1}\alpha=c\rho\epsilon^{-1}\leq c\rho^{1-\frac{2}{3m-2}}\leq c\rho^{1-\frac{1}{m}}$ (recall that $m\geq 5$), completing the proof of \eqref{19dec1_bis}, and hence the one of (\ref{19dec1}).

It remains to prove (\ref{PROPsublevelsetslast}). The first and second  derivatives of   $f_{\rho}(\tau) = (\tau^m+\rho)^{\frac{1}{2}}$, defined for $\tau\geq 0$, are given by
\begin{eqnarray*}
f_{\rho}'(\tau) = \bar{m}\tau^{\bar{m}-1} \left(1+\frac{\rho}{\tau^m}\right) ^{-\frac{1}{2}} \mbox{ and }
f_{\rho}''(\tau)  = \bar{m} \tau ^{\bar{m}-2} \left(1 +\frac{\rho}{\tau^m} \right) ^{-\frac{1}{2}} \left[ (m-1)  - \bar{m} \left(1 +\frac{\rho}{\tau^m} \right) ^{-1}\right].
\end{eqnarray*}
These derivatives satisfy the bounds
\begin{eqnarray}\label{DP-DS}
0 \leq f'_{\rho}(\tau)  \leq \bar{m} \tau^{\bar{m}-1}\quad \mbox{and} \quad 0\leq  f''_{\rho}(\tau)  \leq \bar{m} \left(\bar{m}-1\right)\tau^{\bar{m}-2} \qquad \forall \tau \geq 0.
\end{eqnarray}
Since both $f_{\rho}$ and $\varphi(\lambda) =\sqrt{1+\lambda^2}$ are increasing on $[0,+\infty)$, the length of $\Gamma_{\rho}$ restricted to $[t,s]$, with $s\geq t\geq 0$, satisfies
\begin{eqnarray}\label{DP-DS2}
L(\Gamma_{\rho}|_{[t,s]})= \int_{t}^s \sqrt{1 +f_{\rho}'(\tau)^2}d\tau \leq (s-t) \sqrt{1 +f_{\rho}'(s)^2}.
\end{eqnarray}
Furthermore, by the convexity of $\varphi$ and $f_{\rho}$, we have
\begin{eqnarray*}
L([\Gamma_{\rho}(t),\Gamma_{\rho}(s)]) & = & (s-t) \varphi\left( \frac{f_{\rho}(s) - f_{\rho}(t)}{s-t}   \right) \nonumber\\
& \geq & (s-t) \left[ \varphi\left( f_{\rho}'(s)  \right) +\varphi' \left( f_{\rho}'(s)  \right)\left( \frac{f_{\rho}(s) - f_{\rho}(t)}{s-t}  -  f_{\rho}'(s) \right)  \right]\nonumber\\
& = & (s-t) \sqrt{1 +f_{\rho}'(s)^2} + \frac {(s-t) f'_{\rho}(s)}{\sqrt{1+ f'_{\rho}(s)^2 }} \left( f_{\rho}'(t)  -  f_{\rho}'(s) \right),
\end{eqnarray*}
which, by the mean value theorem, is equal to
\[
(s-t) \sqrt{1 +f_{\rho}'(s)^2} + \frac {(s-t)^2 f'_{\rho}(s)}{\sqrt{1+ f'_{\rho}(s)^2 }}  f_{\rho}''(u),
\]
for some $u\in [t,s]$. Then, (\ref{PROPsublevelsetslast}) follows by combining the inequality above with (\ref{DP-DS})-(\ref{DP-DS2}).

\end{document}